\documentclass[11pt]{article} 

\usepackage{epsfig} 
\usepackage{amssymb,amsmath,amsthm,amscd}
\usepackage{latexsym} 
\usepackage{cancel}
\usepackage{xcolor}
\usepackage{amssymb}

\usepackage[margin=1.1in]{geometry}
\usepackage{soul}
 
 
\newtheorem{thm}{Theorem}[section] 
\newtheorem{prop}[thm]{Proposition} 
\newtheorem{cor}[thm]{Corollary} 
\newtheorem{lem}[thm]{Lemma}

\theoremstyle{definition}

\theoremstyle{remark} 
\newtheorem{rem}{Remark}[section]  
\def\eqref#1{(\ref{#1})} 

\newcommand{\bbR}{{\mathbb{R}}}

%
\newcommand {\mat}      [1] {\left[\begin{array}{#1}}
\newcommand {\rix}          {\end{array}\right]}
\newcommand {\de}      [1] {\left|\begin{array}{#1}}
\newcommand {\nt}          {\end{array}\right|}

%
%
%
\newcommand{\bstar}       {\begin{eqnarray*}}
\newcommand{\estar}       {\end{eqnarray*}}
\newcommand{\eqn}       {\begin{eqnarray}}
\newcommand{\enn}       {\end{eqnarray}}
\newcommand{\eq}[1]   {\begin{equation}\label{#1}}
\newcommand{\en}      {\end{equation}}



\begin{document}  
\begin{titlepage}  
\title{Reversal potential  and reversal permanent charge with unequal diffusion coefficients via classical Poisson-Nernst-Planck models}  
\author{Hamid Mofidi\footnote{Department of Mathematics, University of Kansas, 
1460 Jayhawk Blvd., Room 405,
Lawrence, Kansas 66045, USA ({\tt h.mofidi@ku.edu}).}\; and Weishi Liu\footnote{Department of Mathematics, University of Kansas, 
1460 Jayhawk Blvd., Room 405,
Lawrence, Kansas 66045, USA ({\tt wsliu@ku.edu}).  
}}
\date{\today} 
\end{titlepage}

\maketitle    

\begin{abstract}  In this paper, based on   geometric singular perturbation  analysis of a quasi-one dimensional  Poisson-Nernst-Planck model for ionic flows, we study the problem of zero current condition for ionic flows through membrane channels with  a simple profile of permanent charges. 
 For ionic mixtures of multiple ion species, under equal diffusion constant condition, Eisenberg, et al [{\em Nonlinearity {\bf 28} (2015), 103-128}] derived a system of two equations for determining the reversal potential and an equation for the reversal permanent charge.  The equal diffusion constant condition is significantly degenerate from physical points of view. For unequal diffusion coefficients, the analysis becomes extremely challenging. This work will focus only on   two ion species, one positively charged (cation) and one negatively charged (anion), with two arbitrary diffusion coefficients.  Dependence of reversal potential on channel geometry and diffusion coefficients has been investigated experimentally, numerically, and  analytically in simple setups,  in many works. In this paper, we identify two governing equations for the zero current, which enable one to mathematically analyze how the reversal potential depends on the channel structure and diffusion coefficients.  In particular, we are able to show, with a number of concrete results, that the possible different diffusion constants indeed make significant differences. The inclusion of channel structures is also far beyond the situation  where the Goldman-Hodgkin-Katz (GHK) equation might be applicable. A comparison  of our result with the GHK equation is provided.   The dual problem of reversal permanent charges is briefly discussed too.
 \end{abstract} 

\noindent
{\bf Key words.}  GSP for PNP, reversal potential, reversal permanent charge

\section{Introduction.} 
\setcounter{equation}{0}
Ion channels,  proteins embedded in membranes,  provide a major pathway for cells to communicate with each other and with the outside to transform signals and to conduct group tasks (\cite{BNVHEG, Eis00, Hil01, Hille89}).
 The essential structure of an ion channel is its shape and its permanent charge.
 The shape of a typical channel could be approximated as a cylindrical-like domain.
Within an ion channel, amino acid side chains are distributed mainly over a ``short" and ``narrow" portion of the channel, with acidic side chains contributing negative charges and basic side chains providing positive charges. It is specific of side-chain distributions, which is referred to as the permanent charge of the ion channel.  The function of channel structures is to select the types of ions and to facilitate the diffusion of ions across cell membranes. 
 
At present, these permeation and selectivity properties of an ion channel are mainly derived from the I-V relation measured experimentally (\cite{Hil01, Hille89, H51, HH52a, HH52b, HH52c}). 
  Individual fluxes carry more information than the I-V relation. However, it is expensive and challenging to measure them (\cite{HK55, JEL17}). 
The I-V relation is a reasonable response of the channel structure on ionic fluxes, but it depends on boundary conditions that are, in fact, driving forces of ionic transport. 
The multi-scale feature of the problem with multiple physical parameters enables the system to have high flexibility and to show rich phenomena/behaviors -- a great advantage of ``natural devices" (see, e.g., \cite{BKSA09, Eis}). On the other hand, the same multi-scale characteristic with multiple physical parameters presents a remarkably demanding task for anyone to derive meaningful information from experimental data, also given the fact that the internal dynamics cannot be discerned with the present technique.

Mathematical analysis plays essential and unique roles for revealing mechanisms of observed biological phenomena and for discovering new ones, assuming a more or less explicit solution of the associated mathematical model can be achieved. The latter is often too much to hope. Nonetheless, there have been some accomplishments lately in analyzing Poisson-Nernst-Planck (PNP) models for ionic flows through ion channels (\cite{EL07, ELX15, JEL17, JLZ15, Liu05, Liu09, LX15,PJ}, etc.). 

There are many models, from low resolution to high, for ionic flows in various settings. One can derive PNP systems as reduced models from molecular dynamic models, Boltzmann equations, and variational principles (\cite{Bar,HEL10,HFEL12,SNE01}). The PNP type models have different levels of resolutions: the classical PNP treats dilute ionic mixtures, so ions are approximated by point-charge (no ion-to-ion interactions are included). The PNP-HS takes into consideration of volume exclusive by treating ions as hard-spheres (but the charges are located at the center), etc..  More sophisticated models, such as coupling PNP and Navier-Stokes equations for aqueous motions, have also been revealed (see, e.g. \cite{BKSA09, CEJS95, EHL10, SR81}). 


 Focusing  on certain  critical characteristics of the biological systems, PNP models serve as suitable ones for analysis and numerical simulations of ionic flows. In this work, we apply the classical PNP model and consider a cylinder-like channel to center the basic comprehension of possible effects of general diffusion coefficients in ionic channels. One cannot achieve gating and selectivity by a simple classical PNP model as it treats ions as point charges. However, the primary finding on reversal potentials and their dependence on permanent charges and ratios of diffusion constants seems essential, and some are non-intuitive and worthy of further studies.  At a later time, one should examine more fundamental detail and more correlations between ions in PNP models such as those including various potentials for ion-to-ion interaction accounting for ion sizes effects and voids  \cite{JL12, SL18}. 
 
We are interested in {\em reversal potentials} (or Nernst potentials) as well as {\em reversal   permanent charges}.  They are defined by zero total current: for fixed other physical quantities, the total current $I=I(V,Q)$ depends on the transmembrane potential $V$ and the permanent charge $Q$. For fixed $Q$, {\em a reversal potential} $V=V_{rev}(Q)$ is a transmembrane potential that produces zero current $ I(V_{rev}(Q),Q)=0$. Likewise, for fixed transmembrane potential $V$, {\em a reversal permanent charge} $Q=Q_{rev}(V)$  is a permanent charge that produces zero current $ I(V, Q_{rev}(V))=0$.
 
Nernst was among the first who considered reversal potential and,  for one ion species case, formulated an equation -- now called  the {\em Nernst equation} -- for the reversal potential. Following a treatment of Mott  for electronic conduction in the copper-copper oxide rectifier (\cite{Mott39}),   the Nernst equation was generalized by Goldman (\cite{Goldman}), and Hodgkin and  Katz (\cite{HK49}) -- called {\em Goldman-Hodgkin-Katz (GHK) equation} - for reversal potentials involving multiple ion species. {  The derivations were based on an inaccurate assumption (maybe for simplicity or to be feasible) that the electric potential $\phi(x)$ is linear in $x$ -- the coordinate along the longitude of the channel. Unfortunately, there was no substitute yet for their equations. }

 Recently in \cite{ELX15}, the authors investigated the problem of determining reversal potentials and reversal permanent charges based on rigorous analysis on the Poisson-Nernst-Planck models. For the case when {\em all diffusion constants are equal}, the results are very complete. In particular, a system of two equations is derived that will lead to a determination of the reversal potential, and one equation is derived for the reversal permanent charge. 
 On the other hand, {\em the equal diffusion constants case is degenerate}, which is known from biological point of view even for ionic mixtures of two ion species.
 
 The case with unequal diffusion coefficients has been studied in many works. We will mention a few here and refer the reader to the references therein for more works.
 In \cite{BK18},  the autors conducted a perturbation  analysis from a special solution, {\em a time independent and spatially homogeneous equilibrium solution},   with the ratio $\epsilon=VF/RT$ of a weak applied voltage $V$ and the thermal voltage $RT/F$ as the perturbation parameter. Based on information obtained from the $O(\epsilon)$ terms, the authors identified two time scales of the dynamics: a time scale for charging and a time scale for diffusion. Most importantly, for equal diffusion coefficient, the diffusion process for $O(\varepsilon)$ terms does not occur -- an important effect of unequal ionic mobilities. In the review article \cite{BKSA09}, among many basic topics of electrodiffusion processes,  the authors addressed an important aspect of mobilities, in our opinion,  that is,  how mobilities as well as their spatial inhomogeneities are influenced by other parameters.     
  In the paper \cite{HBRR18},  motivated by several analyses on complications of nonlinear electrodiffusion models with equal ionic mobilities of cation and anion,  the authors  examine the cases with unequal mobilities by  computations of a fully nonlinear electrokinetic model and observed the appearance of a long-range steady field due to   unequal mobilities.

  In this work, allowing different diffusion coefficients, we start our investigation on reversal potentials and reversal permanent charges  for two ion species. We are particularly interested in the effect of unequal diffusion coefficients on the properties of reversal potentials and reversal permanent charges.
 
  The geometric singular perturbation (GSP) framework developed in \cite{EL07, Liu05,Liu09} particularly for analyzing PNP models for ionic flow is again applied as in \cite{ELX15} to get a system of algebraic equations for the problem. The solution method of solving/analyzing the algebraic system is simply different from that in \cite{ELX15} due to the difference between $D_1$ and $D_2$. The difficulty is overwhelmingly increased -- more than technical. An important step in our analysis is a reduction of the algebraic system to two nonlinear equations that turns out to work effectively. 
As a consequence, this reduced system  allows one to, for the first time, examine  how the reversal potential depends on the channel structure, boundary concentrations and diffusion coefficients.    In particular, we are able to establish a number of precise differences that   possible different diffusion constants  make. Some of these results can be explained qualitatively  in terms of physical
 intuitions, for examples,  the dependence of the sign of reversal potential on interplay between diffusion constants, boundary conditions and permanent charge (Theorem \ref{VonQ} and Corollary \ref{V_rev-Sgn}), how the monotonicity of the reversal potential in the permanent charge depends on the relative sizes of the diffusion constants together with the boundary conditions (Theorem \ref{monoVinQ}), etc.    Some are counterintuitive, including the specific dependence on the boundary concentrations of the monotonicity of the reversal potential in $\theta=(D_2-D_1)/{(D_2+D_1)}$ (Proposition \ref{Vonp} and Remark \ref{remVonp}). 
All these results are not known before and there are also several concrete open questions that we share our belief but could not verify.
  The well-known  GHK  equation for the reversal potential is briefly discussed and a short comparison with our result is provided.


  The highlights of our studies in this manuscript are as follows:

	\begin{itemize}
		\item[a.] A mathematically derived system for the zero-current condition (see System (3.4)) that we employed to determine the zero-current flux and  the reversal potential in terms of other parameters (see Equations (3.8) and (4.1));
		
		\item[b.] an examination on how the reversal potential depends on permanent charge and diffusion constants (see Section 4);
		
		\item[c.] a comparison between our reversal
		potential and that of GHK equation in the particular setting (see Section 4.3).
		
	\end{itemize}
	
	Besides, there are some qualitatively relevant but non-intuitive outcomes presented in this work that may help to guide experimentation, and some might not be obvious in intuitive reasoning about ion channel operation.


 The rest of paper is divided as follows. In Section \ref{sec-PNP} we introduce the problem and provide the basic setup for our problem in Section \ref{setup}.  We apply  the  GSP theory in Section \ref{sec-GSP} to derive  the matching system of algebraic equations for the zero current condition.     In Section \ref{sec-ReducedSys}, we discuss the reduced system   for a more straightforward case and make preparation prepare the stage      for our main concern.   The topics on reversal potential, its existence, uniqueness and dependence on permanent charge and diffusion coefficients, are analyzed in Section \ref{revP}.   The topic on reversal permanent charge is briefly discussed in Section \ref{revPC}. Section \ref{ConSec} is a short conclusion. The appendix (Section \ref{Appendix}) details the reduction to the system of two equations for the zero current.


\subsection{A Quasi-one-dimensional PNP Model for Ion Transports.}\label{sec-PNP}
\setcounter{equation}{0}
The PNP system has been extensively studied by simulations and computations  (\cite{AEL, Bar, BCE, BCEJ, CE, ChK, HCE, IR}).   It is clear from these simulations that macroscopic reservoirs -- mathematically boundary conditions -- must be included in the mathematical formulation to describe the actual behavior of channels (\cite{GNE, NCE}).
On the basis that ion channels have narrow cross-sections relative to their lengths, 3-D PNP type models  are further reduced to quasi-one-dimensional models (\cite{LW,NE}):
 \begin{align}\label{PNP-dim}
 \begin{split} 
{\text{Poisson:} }\quad &\frac{1}{A(X)}\frac{d}{dX}\Big(\varepsilon_r(X)\varepsilon_0A(X)\frac{d}{dX}\Phi\Big)=-e_0\Big(\sum_{s=1}^nz_sC_s+{\cal Q}(X)\Big),\\
\text{Nernst-Planck:} \quad & \frac{d}{dX}{\cal J}_k=0, \quad -{\cal J}_k=\frac{1}{k_BT} {\cal D}_k(X) A(X)C_k\frac{d}{d X}\mu_k, \quad
 k=1,2,\cdots, n
\end{split} 
\end{align} 
where $X\in  [0,L]$ is the coordinate along the longitudinal axis of the channel, $A(X)$ is the 
 area of cross-section  of the channel over the location $X$; ${\cal Q}(X)$ is the permanent charge density, $\varepsilon_r(X)$ is the  relative dielectric coefficient, $\varepsilon_0$ is the vacuum permittivity, $e_0$ is the elementary charge, $k_B$ is the Boltzmann constant, $T$ is the absolute temperature;  $\Phi$ is the electric potential,   and, 
 for  the $k$th ion species, $C_k$ is the
concentration, $z_k$ is  the valence (the number of charges per particle), $\mu_k$ is the electrochemical potential depending on $\Phi$ and $C_k$,   ${\cal J}_k(X)$ is the flux density through the cross-section over $X$, and ${\cal D}_k(X)$ is  the diffusion coefficient.

 Equipped with system (\ref{PNP-dim}),  we impose the following   boundary conditions \cite{GE02}, for $k=1,2,\cdots, n$,
\begin{equation}
\Phi(0)={\cal V}, \ \ C_k(0)=L_k>0; \quad \Phi(l)=0,  \ \
C_k(l)=R_k>0. \label{ssBV}
\end{equation}
 
 For an analysis of the boundary value problem (\ref{PNP-dim}) and (\ref{ssBV}), we will work on a dimensionless form. Let $C_0$ be a characteristic concentration of the problems, for example,
 \[C_0=\max_{1\le k\le n}\big\{ L_k, R_k, \sup_{X\in[0,L]}|{\cal Q}(X)|\big\}.\]
 Set \[{\cal D}_0=\max_{1\le k\le n}\{\sup_{X\in [0,L]}{\cal D}_k(X)\}\;\mbox{ and }\; \bar{\varepsilon}_r=\sup_{X\in [0,L]} \varepsilon_r(X).\]
Let
\begin{align}\label{dimless}\begin{split}
&\varepsilon^2=\frac{\bar{\varepsilon}_r\varepsilon_0k_BT}{L^2e_0^2C_0},\quad
  \hat{\varepsilon}_r(x)=\frac{\varepsilon_r(X)}{\bar{\varepsilon}_r},\quad x=\frac{X}{L},\quad  h(x)=\frac{A(X)}{L^2},\quad D_k(x)=\frac{{\cal D}_k(X)}{{\cal D}_0},\\
  & Q(x)=\frac{{\cal Q}(X)}{C_0},  \quad \phi(x)=\frac{e_0}{k_BT}\Phi(X), \quad c_k(x)=\frac{C_k(X)}{C_0},\quad \hat{\mu}_k=\frac{1}{k_BT}\mu_k, \quad J_k=\frac{{\cal J}_k}{LC_0 {\cal D}_0}. 
  \end{split}
 \end{align}
In terms of the new variables,     BVP (\ref{PNP-dim}) and (\ref{ssBV}) becomes, for $k=1,2,\cdots,n$,
\begin{align}\label{PNP}
\begin{split} 
 \frac{\varepsilon^2}{ h(x)}  \frac{d}{dx}\left(\hat{\varepsilon}_r(x)h (x)\frac{d}{dx}\phi\right)=&- \sum_{s=1}^nz_s c_s -  Q(x), \\ 
\frac{d J_k}{dx}  =0, \quad  -J_k=&h (x)D_k(x)c_k\frac{d  }{dx}\hat{\mu}_k ,   
\end{split} 
\end{align} 
with the boundary conditions  
\begin{equation}\label{BV} 
\phi(0)=V=\frac{e_0}{k_BT}{\cal V},  \quad c_k(0)=l_k=\frac{L_k}{C_0}; \quad \phi(1)=0,\quad c_k(1)=r_k=\frac{R_k}{C_0}.
\end{equation} 

The dimensionless parameter $\varepsilon$ is the ratio of Debye length $\lambda_D$ over the distance $L$ between the two applied electrodes, that is,
$
\varepsilon =  {\lambda_D}/{L}.
$
We will assume     $\varepsilon$ is small, which allows us to treat the problem as a singularly perturbed problem.   
The dimensionless parameter $\varepsilon$ may not be small in general but, for ion channel problems, it is typically small.
For example,  if   $L= 2.5\mbox{ nm} =2.5 \times 10^{-9} \mbox{ m}$ and $C_0 = 10 \mbox{ M}$, then it is shown in \cite{EL17,MEL20} that $\varepsilon \approx 10^{-3}$.

 We impose the electroneutrality conditions on the concentrations to avoid sharp boundary layers, which cause significant changes (large gradients) of the electric potential and concentrations near the boundaries so that computation of these values has non-trivial uncertainties. 
	
 \begin{align}\label{neutral}
 \sum_{s=1}^nz_sl_s= \sum_{s=1}^nz_sr_s=0.
 \end{align}
 
 The electrochemical potential $\hat{\mu}_k(x)=\hat{\mu}_k^{id}(x)+\hat{\mu}_k^{ex}(x)
$ for the $k$th ion species
consists of the ideal component $\hat{\mu}_k^{id}(x)$ and the excess
component $\hat{\mu}_k^{ex}(x)$,
where the ideal component is 
\begin{equation}
\hat{\mu}_k^{id}(x)=z_k \phi(x)+ \ln c_k(x). \label{Ideal}
\end{equation}
The   classical  PNP model    only deals with  the ideal component $\hat{\mu}_k^{id}(x)$, which ignores the size of ions and reflects the entropy of the dilute ions in water. Dilute solutions tend to approach ideality as they proceed toward infinite dilution  (\cite{RC13}).
  This component is essential for dealing with properties of crowded ionic mixtures where concentrations exceed say 1M.
 
 For given $V$, $Q(x)$, $l_k$'s and $r_k$'s,   if  $(\phi(x;\varepsilon), c_k(x;\varepsilon), J_k(\varepsilon))$ is a solution of the boundary value problem (\ref{PNP}) and (\ref{BV}), then  the current ${\cal I} $ is   
 \begin{align}\label{IV}
{\cal I}= {\cal I}(\varepsilon)=\sum_{s=1}^nz_sJ_s(\varepsilon).
 \end{align}
 
 We will be interested in the zero order approximation of $I= {\cal I}(0)$ and $J_k=J_k(0)$.
Note that, $J_k$ depends on $V$, $Q(x)$, $l_k$'s and $r_k$'s, so is $I$.  
As mentioned before, we will focus mainly on the dependance of   $I=I(V,Q)$ on the electric potential $V$ and permanent charge $Q$. Particularly, for fixed $Q$, the electric potential $V$ so that $I(V,Q)=0$ is {\em the reversal potential}. The reversal potential has been used to identify the type (i.e., selectivity) of ion channels in biological experiments since 1949 (\cite{HHK49, HK49}).  Similarly, for fixed $V$, the permanent charge $Q$ that makes $I(V,Q)=0$ is called {\em a reversal permanent charge} as introduced in \cite{ELX15}.  For the existence of a reversal permanent charge $Q$ of a general form, a necessary condition is that
{\em the quantities $z_k(z_kV+\ln l_k-\ln{ r_k})$, for $k=1,2,\ldots, n$, cannot have the same sign} (Proposition 1.1 in \cite{ELX15}).

%
%
%
%


\subsection{Setup of the Case Study for $n=2$.}\label{setup}
We now specify the case treated in this paper. We will examine the question    by working on the simplest model,  {\em the classical PNP} model (\ref{PNP}) with the ideal electrochemical potential $\hat{\mu}_k=z_k\phi+\ln c_k$, a simple profile of a permanent charge $Q(x)$ (see (A2) below), and the boundary condition (\ref{BV}). 
 We will  focus on the case of two ($n=2$) ion species but allow different diffusion coefficients. More precisely, we will assume
{\em \begin{itemize}
\item[(A0)] $\hat{\varepsilon}(x)=1$ and, for $k=1,2$,  $D_k(x)=D_k$ is a constant;
\item[(A1)] Electroneutrality boundary conditions (\ref{neutral});  
\item[(A2)] A piecewise constant permanent charge $Q$ with one nonzero region; that is, for a partition $0=x_0<x_1<x_2<x_3=1$ of   $[0,1]$, 
 \begin{align}\label{Q}
 Q(x)=\left\{\begin{array}{ll}
 Q_1=Q_3=0, & x\in (x_0,x_1)\cup (x_2,x_3),\\
 { Q_2=2Q_0}, & x\in (x_1,x_2),
 \end{array}\right.
 \end{align}   
 where  $Q_2=2Q_0$ is an arbitrary constant.
 \end{itemize}}
 
For permanent charges $Q$  of the form  in (\ref{Q}) and for general $n$, under the condition of equal diffusion coefficients $D_k$'s, the topics on the reversal potential and reversal permanent charges were examined completely in \cite{ELX15}. It turns out that the condition of equal diffusion coefficients is highly degenerate (see  Remark \ref{slowphi}).  This is the main technical reason for us to limit to the case $n=2$ in this work.
  
  \begin{figure}[h]\label{Fig-PermChargeHx}
	\centerline{\epsfxsize=5.5in \epsfbox{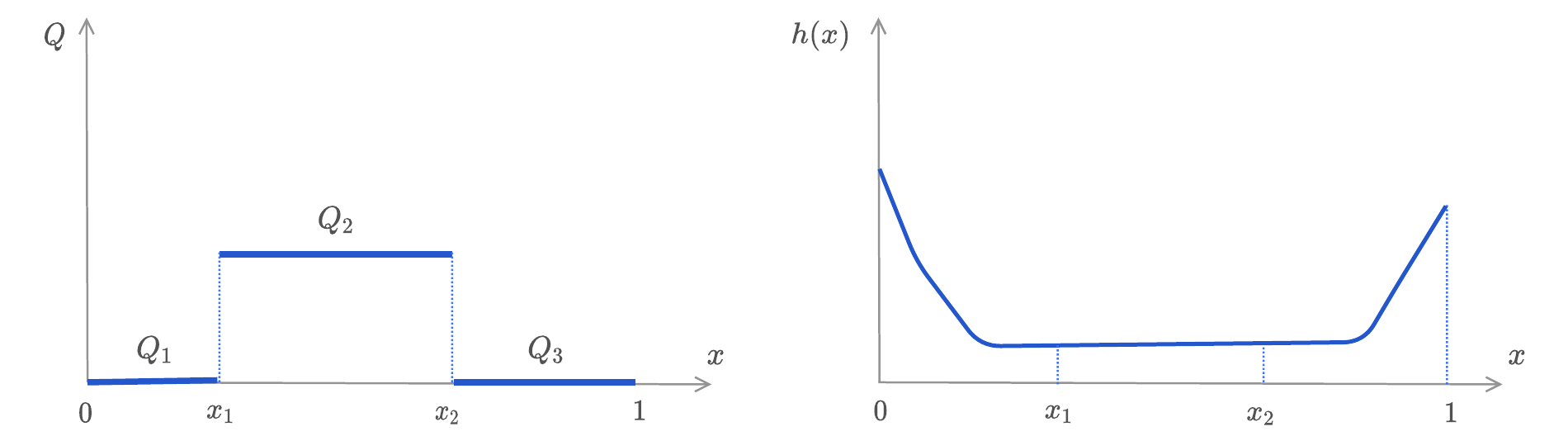} }\vspace*{-.1in}
	\caption{\em  The left panel shows the permanent charge; the right panel shows the $h(x)$ where the neck of channel is between $x_1$ and $x_2$, within which permanent charge is confined. }
	\label{Fig-PermChargeHx}
\end{figure}

   \section{GSP for  the BVP (\ref{PNP}) and (\ref{BV}) and the Results on Current Reversal for the Case Study with $n=2$. } \label{sec-GSP}
  \setcounter{equation}{0}
  In this section, for two ion species, we will apply a GSP to reduce the BVP  (\ref{PNP}) and (\ref{BV}) with current $I=0$ to a system of algebraic equations (\ref{Matching}) and (\ref{Matching2}). 
  
 In \cite{Liu09},  a GSP framework, combining with special structures of PNP systems, has been developed for studying  the BVP (\ref{PNP}) and (\ref{BV}). This general dynamical system framework and the subsequent analysis have demonstrated the great power of analyzing  PNP type problems with potential boundary and internal layers (see \cite{EL07, Liu05,  Liu09, LX15} for study on  the classical PNP models, \cite{LLYZ} for PNP with a local excess hard-sphere components, and \cite{JL12,  LTZ, SL18} for PNP with nonlocal excess hard-sphere components). 

For convenience, we will give a brief account of the relevant results in \cite{Liu09} (with slightly different notations) and refer the readers to  the paper for details. We remind the readers that  we  will work on the classical PNP  with ideal electrochemical potential $\mu_k=z_k\phi+\ln c_k$.


{ In the following, we consider the case with $n=2$ ion species.   As we go further, over the slow layers, we apply another limitation on the valences of ion species and consider the case where $z_1 =- z_2$.}

Denote the derivative with respect to $x$ by  overdot  and
introduce $u=\varepsilon \dot \phi$  and $w=x$.
  System~\eqref{PNP} becomes, for $k=1,2$,   
 \begin{align}\label{slow}\begin{split} 
  \varepsilon\dot \phi=&   u,\quad
  \varepsilon \dot u=  -z_1c_1-z_2c_2-Q(w) - \varepsilon \frac{h_w(w)}{h(w)} u, \\
  \varepsilon \dot c_k=&-z_kc_ku - \varepsilon  \frac{J_k}{D_k h(w)},  \quad \dot J_k= 0,\quad \dot w=1.
 \end{split}
 \end{align}
  System~\eqref{slow} will be treated as a dynamical system with the phase space
$\bbR^{7}$ and the independent variable $x$ is viewed as time for the dynamical system.

The introduction of the new state variable $w = x$ and the augmentation of the equation $\dot w=1$ is crucial for two reasons: first of all, it makes system~\eqref{slow} an autonomous system which will be treated as a dynamical system; secondly,  one can then covert the boundary value problem from $x=0$ to $x=1$ to a connecting problem between $B_0$ and $B_3$, which is stated below display (\ref{orb}). Note that $w=x=0$ is encoded in $B_0$ and $w=x=1$ is encoded in $B_3$. Thus, whenever one finds an orbit between $B_0$ and $B_3$, it automatically starts from $x=0$ and ends at $x=1$. In particular, one can multiply the vector field of system~\eqref{slow} by any (positive) scale function which may depend on state variables since this would not change the phase space portrait of system~\eqref{slow}. The latter is applied in this paper that transforms (\ref{Dslow1}) to (\ref{DSlow-y}). The need for the change of variable is better shown in [Liu09] where more than two ion species are involved.

The boundary condition~\eqref{BV}  becomes,  for $k=1,2$, 
 \begin{align}\label{BV4cp}
 \phi(0)=V, \; c_k(0)= l_k, \; w(0)=0;\quad
\phi(1)=0, \; c_k(1)=r_k, \; w(1)=1. 
\end{align}

Following the framework in \cite{Liu09}, we convert the boundary value problem to a connecting problem. To this end, 
we denote $C=(c_1,  c_2)^T$ and $J=(J_1, J_2)^T$, and
we preassign values of $\phi$ and $C$ at $x_1$ and $x_2$:
\[\phi(x_j)=\phi^{[j]}\;\mbox{ and }\; C(x_j)=C^{[j]}\;\mbox{ for }\; j=1,2.\]
Now for  $j=0,1,2,3$, let $B_j$ be the subsets of the phase space $\bbR^{7}$ defined by
\begin{align}\label{orb}
B_j=\Big\{(\phi, u, C, J, w):\;\phi=\phi^{[j]},\; C=C^{[j]},\; w=x_j\Big\},
\end{align}
 
Note that the set $B_0$ is associated to the boundary condition in (\ref{BV4cp}) at $x=0$ and the set $B_3$ is associated to the boundary conditions at $x=1$.
 Thus, the  BVP (\ref{PNP}) and (\ref{BV}) is equivalent to the following 
{\em connecting orbit problem}: finding an orbit of (\ref{slow}) from $B_0$ to $B_3$. 
The construction would be accomplished by finding first a singular connecting orbit -- a union of limiting slow orbits and limiting fast orbits,
and then applying the exchange lemma to show the existence of a connecting orbit for $\varepsilon>0$ small (see \cite{Liu09} for details). For the problem at hand, the construction of a singular orbit consists of one singular connecting orbit from $B_0$ to $B_1$, one from $B_1$ to $B_2$, and one from $B_2$ to $B_3$ with a matching of $(J_1,J_2)$ and $u$ at $x_1$ and $x_2$ (see again \cite{Liu09} for details).

\subsection{Singular connecting orbits from $B_{j-1}$ to $B_j$ for $j=1,2,3$}
By setting $\varepsilon=0$ in system (\ref{slow}), we  get the  
 {\em slow manifold} 
 \[{\mathcal Z}_j=\Big\{u=0,\; z_1c_1+z_2c_2+ Q_j=0\Big\}.\] 
  In terms of the  independent variable $\xi= x/\varepsilon$, 
  we obtain {\em the fast system} of~\eqref{slow}, for $k=1,2$, 
 \begin{align}\label{fast}\begin{split} 
      & \phi'=   u,\;
    u'=  -z_1c_1-z_2c_2 - Q_j- \varepsilon \dfrac{h_w(w)}{h(w)} u ,\\
   & c_k'=-z_kc_ku -\varepsilon \dfrac{J_k}{D_k h(w)}, \quad J'= 0, \quad  w'=\varepsilon,   
 \end{split}
 \end{align}
where   prime denotes the derivative with respect to   $\xi$.
 The limiting fast system is, for $k=1,2$,
\begin{align}\label{limfast}\begin{split} 
     \phi'=&   u,\quad
    u'=  -z_1c_1-z_2c_2  - Q_j, \quad
    c_k'=-z_kc_ku, \quad J'=0,\quad  w'=0.
 \end{split}
 \end{align}

The slow manifold ${\mathcal Z}_j$ is precisely the set of equilibria of (\ref{limfast}) with   $\dim{\mathcal Z}_j=5$.  For the linearization of (\ref{limfast}) at each point on ${\mathcal Z}_j$,  there are  $5$ zero eigenvalues associated to the tangent space of ${\mathcal Z}_j$ and the other two eigenvalues 
 are $ \pm\sqrt{z_1^2c_1+z_2^2c_2}$.
Thus, ${\mathcal Z}_j$ is normally hyperbolic (see~\cite{Fen, HPS}). We will denote the stable and unstable manifolds of ${\mathcal Z}_j$ by
$W^s({\mathcal Z}_j)$ and  $W^u({\mathcal Z}_j)$, respectively.  


Let $M^{[j-1,+]}$   be the collection of all forward orbits from  $B_{j-1}$ under the flow of \eqref{limfast}
and let  $M^{[j,-]}$   be the collection of all backward orbits from $B_{j}$.  Then the set of forward orbits from $B_{j-1}$ to 
${\mathcal Z}_j$ is  $N^{[j-1,+]}=M^{[j-1,+]}\cap W^s({\mathcal Z}_j)$, and the set of backward orbits from $B_{j}$ to 
${\mathcal Z}_j$ is $N^{[j,-]}=M^{[j,-]}\cap W^u({\mathcal Z}_j)$. Therefore, the singular layer $\Gamma^{[j-1,+]}$ at $x_{j-1}$ satisfies  $\Gamma^{[j-1,+]}\subset  N^{[j-1,+]} $ and the singular layer $\Gamma^{[j,-]}$ at $x_{j}$ satisfies $\Gamma^{[j,-]}\subset N^{[j,-]}$.
All those important geometric objects  are {\em explicitly} characterized in \cite{Liu09}.

  \subsubsection{Fast (layer) dynamics for singular layers  at $x_1$ and $x_2$.}\label{inner}
  
The limiting fast (layer) dynamics conserve electrochemical potentials, and hence, do not depend on diffusion constant (see, e.g. Proposition 3.3 in \cite{Liu09}). We thus can apply the result about the fast dynamics from \cite{ELX15} directly and only point out the differences.  The relevant results are Lemmas 3.4 and 3.5 in \cite{ELX15}. The differences are that we have to keep $\phi^{[1,-]}$, $\phi^{[1,+]}$, $\phi^{[2,-]}$ and $\phi^{[2,+]}$ here in  this paper, while in \cite{ELX15} it is known that $\phi^{[1,-]}=V$,
  $\phi^{[1,+]}=\phi^{[2,-]}$ (denoted by ${\cal V}^*$ there) and $\phi^{[2,+]}=0$. With this modification, these lemmas are cast below for $n=2$.
 
  \begin{lem}\label{fast1}
  The fast layer dynamics  over $x=x_1$ provides, for $k=1,2$,
  \begin{itemize}
  \item[(i)] relative to $(0,x_1)$ where $Q_1=0$,
 \[  z_1c_1^{[1]}e^{z_1(\phi^{[1]}-\phi^{[1,-]})}+ z_2c_2^{[1]}e^{z_2(\phi^{[1]}-\phi^{[1,-]})}=0,
 \quad c_k^{[1,-]}=c_k^{[1]}e^{z_k( \phi^{[1]}-\phi^{[1,-]})};\]
 \item[(ii)] relative to $(x_1,x_2)$ where $Q_2\neq 0$,
 \[  z_1c_1^{[1]}e^{z_1(\phi^{[1]}-\phi^{[1,+]})}+ z_2c_2^{[1]}e^{z_2(\phi^{[1]}-\phi^{[1,+]})} + Q_2=0, \quad c_k^{[1,+]}=c_k^{[1]}e^{z_k( \phi^{[1]}-\phi^{[1,+]})};\]
 \item[(iii)] the matching   $u_-^{[1]}=u_+^{[1]}$: \quad
 $c_1^{[1,-]} + c_2^{[1,-]} =c_1^{[1,+]} + c_2^{[1,+]} +Q_2(\phi^{[1]}-\phi^{[1,+]})$.
 \end{itemize}
 \end{lem}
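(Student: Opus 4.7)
The plan is to exploit two first integrals of the limiting fast system \eqref{limfast} across the singular layer at $x = x_1$. That layer decomposes into a left piece---governed by \eqref{limfast} with $Q_j = Q_1 = 0$ and connecting the slow-manifold point $(\phi^{[1,-]}, 0, c^{[1,-]}) \in {\mathcal Z}_1$ to the intermediate point $(\phi^{[1]}, u_-^{[1]}, c^{[1]}) \in B_1$---and a right piece---governed by \eqref{limfast} with $Q_j = Q_2$ and connecting $(\phi^{[1]}, u_+^{[1]}, c^{[1]}) \in B_1$ to $(\phi^{[1,+]}, 0, c^{[1,+]}) \in {\mathcal Z}_2$. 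The three assertions of the lemma are precisely the identities that these first integrals, together with the defining equations of the slow manifolds, impose on the endpoints.

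First I would verify that $c_k e^{z_k \phi}$ is conserved along orbits of \eqref{limfast}: using $\phi' = u$ and $c_k' = -z_k c_k u$, one checks $(c_k e^{z_k \phi})' \equiv 0$ independently of $Q_j$. Equating this integral at the two endpoints of each piece yields $c_k^{[1,\mp]} = c_k^{[1]} e^{z_k(\phi^{[1]} - \phi^{[1,\mp]})}$, which is the second formula in (i) and (ii). Substituting those expressions into the defining slow-manifold relations $z_1 c_1^{[1,\mp]} + z_2 c_2^{[1,\mp]} + Q_j = 0$ (with $Q_j = Q_1 = 0$ on ${\mathcal Z}_1$ and $Q_j = Q_2$ on ${\mathcal Z}_2$) then yields the first formula in (i) and in (ii), respectively.

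For (iii) I would need a second conservation law. Differentiating $\tfrac12 u^2 + Q_j \phi - (c_1 + c_2)$ along \eqref{limfast} and using the $u$-equation to eliminate $u'$ shows that $H_j := \tfrac12 u^2 + Q_j \phi - (c_1 + c_2)$ is a first integral for the flow with permanent charge $Q_j$. Applying $H_1$-conservation across the left piece (where $Q_1 = 0$) expresses $\tfrac12(u_-^{[1]})^2$ in terms of $c_k^{[1,-]}$ and $c_k^{[1]}$, and $H_2$-conservation across the right piece expresses $\tfrac12(u_+^{[1]})^2$ in terms of $c_k^{[1,+]}$, $c_k^{[1]}$, $\phi^{[1]}$ and $\phi^{[1,+]}$. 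Equating the two and cancelling the common $(c_1^{[1]} + c_2^{[1]})$-term rearranges to exactly the identity in (iii).

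The main obstacle, in my view, is spotting the Hamiltonian-type integral $H_j$; once it is in hand the proof reduces to bookkeeping. A subtlety worth recording is that the matching at $B_1$ is the signed equality $u_-^{[1]} = u_+^{[1]}$ and not merely $(u_-^{[1]})^2 = (u_+^{[1]})^2$: this is enforced by continuity of $u$ along the limiting singular orbit as it crosses $B_1$. As a sanity check, the analogue at $x_2$ follows by interchanging the roles of left and right, and specializing $\phi^{[1,-]} = V$, $\phi^{[1,+]} = \phi^{[2,-]}$, $\phi^{[2,+]} = 0$ should recover Lemmas 3.4--3.5 of \cite{ELX15} cited in the text.
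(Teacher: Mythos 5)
Your proof is correct, and it is essentially the argument the paper relies on: the paper gives no proof of Lemma \ref{fast1} itself but invokes the conservation of the electrochemical potentials (equivalently, of $c_ke^{z_k\phi}$) along the limiting fast flow and cites Lemmas 3.4--3.5 of \cite{ELX15} and Proposition 3.3 of \cite{Liu09}, where exactly your two first integrals --- $c_ke^{z_k\phi}$ and the Hamiltonian-type quantity $\tfrac12u^2+Q_j\phi-(c_1+c_2)$ --- combined with the slow-manifold relations $z_1c_1+z_2c_2+Q_j=0$ and $u=0$ produce (i)--(iii). Your remark that (iii) only encodes $(u_-^{[1]})^2=(u_+^{[1]})^2$ while the actual matching is the signed equality $u_-^{[1]}=u_+^{[1]}$ is a worthwhile observation, though for the lemma as stated only the implication from the signed matching to (iii) is needed.
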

 
  \begin{lem}\label{fast2}
  The fast layer dynamics  over $x=x_2$ provides, for $k=1,2$,
  \begin{itemize}
  \item[(i)] relative to $(x_1,x_2)$ where $Q_2\neq 0$,
 \[  z_1c_1^{[2]}e^{z_1(\phi^{[2]}-\phi^{[2,-]})}+ z_2c_2^{[2]}e^{z_2(\phi^{[2]}-\phi^{[2,-]})} + Q_2=0,
 \quad c_k^{[2,-]}=c_k^{[2]}e^{z_k( \phi^{[2]}-\phi^{[2,-]})};\]
 \item[(ii)] relative to $(x_2,1)$ where $Q_3= 0$ , 
 \[  z_1c_1^{[2]}e^{z_1(\phi^{[2]}-\phi^{[2,+]})}+ z_2c_2^{[2]}e^{z_2(\phi^{[2]}-\phi^{[2,+]})} =0, \quad c_k^{[2,+]}=c_k^{[2]}e^{z_k( \phi^{[2]}-\phi^{[2,+]})};\]
 \item[(iii)] the matching   $u_-^{[2]}=u_+^{[2]}$: \quad
 $c_1^{[2,-]} + c_2^{[2,-]} +Q_2(\phi^{[2]}-\phi^{[2,-]}) =c_1^{[2,+]} + c_2^{[2,+]} $.
 \end{itemize}
 \end{lem}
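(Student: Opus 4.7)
The plan is to exploit two conserved quantities of the limiting fast system \eqref{limfast} to turn the geometry of $W^s(\mathcal{Z}_j)$ and $W^u(\mathcal{Z}_j)$ into the stated algebraic identities. Since the authors have already recorded the analogous identities at $x=x_1$ in Lemma~\ref{fast1} and note that the fast layer dynamics is independent of the diffusion constants, the proof at $x=x_2$ should be structurally identical; I will only emphasize the ingredients and the matching step.

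\textbf{Step 1: conserved quantities.} From $c_k' = -z_k c_k u = -z_k c_k\,\phi'$ one gets $(c_k e^{z_k\phi})'=0$, so each $c_k e^{z_k\phi}$ is a first integral on every constant-$Q$ piece of the limiting fast flow. Multiplying $u'=-z_1c_1-z_2c_2-Q_j$ by $u=\phi'$ and substituting $-z_k c_k u = c_k'$ yields a second first integral
\[
H_j(\phi,u,c_1,c_2)\;=\;\tfrac{1}{2}u^2-c_1-c_2+Q_j\phi.
\]
These are all that is needed.

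\textbf{Step 2: parts (i) and (ii).} The singular layer $\Gamma^{[2,-]}\subset W^u(\mathcal{Z}_2)$ connects a point $(\phi^{[2,-]},0,c_1^{[2,-]},c_2^{[2,-]})$ on $\mathcal{Z}_2$ (so $u=0$ and $z_1 c_1^{[2,-]}+z_2 c_2^{[2,-]}+Q_2=0$) to the jump state $(\phi^{[2]},u_-^{[2]},c_1^{[2]},c_2^{[2]})\in B_2$. Conservation of $c_k e^{z_k\phi}$ along this orbit gives immediately $c_k^{[2,-]}=c_k^{[2]}e^{z_k(\phi^{[2]}-\phi^{[2,-]})}$, and substituting this into the defining equation of $\mathcal{Z}_2$ produces the first displayed identity of (i). Part (ii) is the same argument applied to $\Gamma^{[2,+]}\subset W^s(\mathcal{Z}_3)$, now with $Q_3=0$ in place of $Q_2$ and $\phi^{[2,+]}$ in place of $\phi^{[2,-]}$.

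\textbf{Step 3: the matching (iii).} Applying $H_j$ with $Q_j=Q_2$ to the left layer between its endpoint on $\mathcal{Z}_2$ and $B_2$, and with $Q_j=0$ to the right layer between $B_2$ and its endpoint on $\mathcal{Z}_3$, I get
\[
\tfrac{1}{2}(u_-^{[2]})^2-c_1^{[2]}-c_2^{[2]}+Q_2\phi^{[2]}=-c_1^{[2,-]}-c_2^{[2,-]}+Q_2\phi^{[2,-]},
\]
\[
\tfrac{1}{2}(u_+^{[2]})^2-c_1^{[2]}-c_2^{[2]}=-c_1^{[2,+]}-c_2^{[2,+]}.
\]
Imposing the continuity $u_-^{[2]}=u_+^{[2]}$ at $B_2$ (which is part of what it means for a singular orbit to pass through $B_2$) eliminates the common $\tfrac12 u^2$ and $c_1^{[2]}+c_2^{[2]}$ and yields exactly $c_1^{[2,-]}+c_2^{[2,-]}+Q_2(\phi^{[2]}-\phi^{[2,-]})=c_1^{[2,+]}+c_2^{[2,+]}$.

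\textbf{Where I expect the only subtlety.} The quantities $H_j$ themselves only give $(u_-^{[2]})^2=(u_+^{[2]})^2$, which is weaker than $u_-^{[2]}=u_+^{[2]}$. The equality of signs is not an output of the conservation laws but a requirement of the connecting-orbit construction: the singular orbit must cross $B_2$ with a single well-defined value of $u=\varepsilon\dot\phi$, forcing both branches to meet with the same $u$. Once this is taken from the framework of \cite{Liu09}, the calculations above close the proof with no further work.
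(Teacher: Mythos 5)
Your proposal is correct and is essentially the paper's own argument: the paper does not reprove the lemma but imports it from Lemmas 3.4--3.5 of \cite{ELX15} (via Proposition 3.3 of \cite{Liu09}), and those results are established exactly by the two first integrals you use --- conservation of $c_ke^{z_k\phi}$ (i.e.\ of the electrochemical potential) along each constant-$Q$ layer, and the Hamiltonian-type integral $\tfrac12u^2-c_1-c_2+Q_j\phi$ evaluated between $B_2$ and the landing points on the slow manifolds, with the matching $u_-^{[2]}=u_+^{[2]}$ supplying the sign information beyond equality of squares. Your self-contained write-up, including the remark that the conservation laws alone only give $(u_-^{[2]})^2=(u_+^{[2]})^2$, accurately fills in the details the paper delegates to its references.
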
 

 \subsubsection{Slow dynamics for regular layers over $(x_{j-1},x_j)$.}\label{outer}
 The degeneracy of equal diffusion coefficients shows in the slow dynamics. We will point out the exact place in the following construction of the slow orbits over the slow manifold
 \[{\cal Z}_j=\Big\{u=0, \;z_1c_1+z_2c_2 +Q_j=0\Big\}.\]
Note that  system \eqref{slow} is degenerate at $\varepsilon=0$ in the sense that   all dynamical information on  $(\phi, c_1,c_2)$ would be lost when setting $\varepsilon=0$.  In \cite{Liu09},   the dependent variables are rescaled as  
\[u=\varepsilon p,\quad z_2c_2=-z_1c_1-Q_j-\varepsilon q.\]

 Replacing $(u,c_n)$ with $(p, q)$,  slow system~\eqref{slow} becomes  
 \begin{align*}
   \dot \phi=&  p,\quad 
  \varepsilon \dot p=  q, \quad   \dot c_1=-z_1c_1p -  \dfrac{J_1}{D_1 h(w)},\quad \dot J=0,\quad \dot w=1,\\
  \varepsilon \dot q=&\Big((z_1-z_2)z_1c_1 -z_2Q_j-\varepsilon z_2 q\Big)p+\frac{1}{h(w)}\Big(\dfrac{z_1J_1}{D_1}+\dfrac{z_2J_2}{D_2}\Big).
 \end{align*}
The limiting slow system  is
\begin{align}\label{limnewslow}\begin{split} 
  \dot \phi=&  p,\quad
  q=0,\quad  \dot c_1=-z_1c_1p -  \dfrac{J_1}{D_1h(w)},\quad \dot J=0,\quad \dot w=1,\\
 0=  & \Big((z_1-z_2)z_1c_1 -z_2Q_j\Big)p+\frac{1}{h(w)}\Big(\dfrac{z_1J_1}{D_1}+\dfrac{z_2J_2}{D_2}\Big).    
 \end{split}
 \end{align} 
Therefore, on the new slow manifold 
\[{\mathcal S}_j = \left\{p=-\frac{{z_1J_1}/{D_1}+{z_2J_2}/{D_2}}{h(w)\Big((z_1-z_2)z_1c_1 -z_2Q_j\Big)},\; q=0\right\},\]
 system~\eqref{limnewslow}  reads  
\begin{align}\label{redlimslow0}\begin{split}
  \dot \phi=& -\frac{{z_1J_1}/{D_1}+{z_2J_2}/{D_2}}{h(w)\Big((z_1-z_2)z_1c_1 -z_2Q_j\Big)},\\
    \dot c_1=&\frac{{z_1J_1}/{D_1}+{z_2J_2}/{D_2}}{h(w)\Big((z_1-z_2)z_1c_1 -z_2Q_j\Big)}z_1c_1
    -\dfrac{J_1}{D_1h(w)},\\
\dot J=&0,\quad \dot w=1.
\end{split}
 \end{align}
 On ${\mathcal S}_j$ where $q=z_1c_1+z_2c_2+Q_j=0$, it follows   that
\[(z_1-z_2)z_1c_1 -z_2Q_j
  = z_1^2c_1+z_2^2c_2.\] 

\begin{rem}\label{slowphi}   Note that, with {\em equal diffusion constant condition $D_1=D_2$},  the zero current $ I= z_1J_1+z_2J_2=0$ reduces system (\ref{redlimslow0}) to
\begin{align*}
  \dot \phi=& 0,\quad
    \dot c_1=     -  \dfrac{J_1}{D_1h(w)},\quad
\dot J=0,\quad \dot w=1.
 \end{align*}
The system can be solved explicitly and the solution is simple enough which is the very reason for the authors in \cite{ELX15} to obtain their rather specific results for general $n$. This is NOT the case if $D_k$'s are not the same. {In order to get reasonably explicit solution that can lead to advances of understanding of the physical problem, one has serious trouble to treat even the case with $n=2$.} In fact, we can only handle the case where $n=2$ and $z_1=-z_2$ at this moment.    \qed
\end{rem}

We now get back to system (\ref{redlimslow0}) and add a further assumption that $z_1=-z_2$. For zero current
  $I=z_1J_1 + z_2J_2 =0$ (so $J_1=J_2$), we have
\begin{equation}\label{Dz}
 \dfrac{z_1J_1}{D_1} + \dfrac{z_2J_2}{D_2} = \dfrac{D_2 - D_1}{D_1D_2}z_1J_1.
\end{equation}


\noindent Applying zero current condition \eqref{Dz}, the limiting slow system \eqref{redlimslow0} becomes,
\begin{align}\label{Dslow1}
\begin{split}
  \dot \phi=&  -\frac{(D_2-D_1)J_1}{D_1 D_2 h(\omega) (2z_1c_1 +Q_j)},\quad
    \dot c_1  =-\frac{(D_2 + D_1) z_1c_1+D_2Q_j }{D_1D_2 h(\omega) (2z_1c_1 +Q_j)} J_1,\quad
\dot J_1=0,\quad \dot w=1.
\end{split}
 \end{align}
 
 
 \noindent
 \underline{\bf Slow system (\ref{Dslow1}) on $(x_0,x_1)$  with $Q_1=0$:}
 \begin{align}\label{Dslow1x1}
  \dot \phi=&  -\frac{(D_2 - D_1)J_1}{2D_1 D_2 h(\omega)z_1c_1},\quad
    \dot c_1  =-\frac{D_1 +D_2 }{2D_1 D_2 h(\omega) } J_1,\quad
\dot J_1=0,\quad \dot w=1.
 \end{align}
The solution of \eqref{Dslow1x1} with the initial condition $(V, l_1, J_1, 0 )$ is,
\begin{align*}
&w (x) = x, \quad c_1(x) = l_1 - \frac{D_1 + D_2  }{2D_1 D_2} J_1 H(x),\quad \phi(x) = V - \frac{D_1 - D_2}{z_1(D_1 + D_2)} \ln \dfrac{c_1(x)}{l_1},
\end{align*}
where  $H(x) =\displaystyle \int_0^x \frac{1}{h(s)}ds$.
Evaluating the solution at $w = x =x_1$ we get the following lemma.
\begin{lem}\label{lem-slow1}
 Over $(0,x_1)$ with $z_1c_1(x) + z_2c_2(x)=-Q_1=0$ the slow dynamics system gives,
\begin{align*}
c_1^{[1,-]} = l_1 - \frac{D_1+D_2 }{ 2D_1 D_2} J_1 H(x_1),\quad
\phi^{[1,-]} = V - \frac{D_1 - D_2}{z_1(D_1+D_2)} \ln \dfrac{c_1^{[1,-]}}{l_1}.
\end{align*}
 \end{lem}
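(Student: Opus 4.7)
The plan is to integrate the reduced slow system \eqref{Dslow1x1} explicitly using the initial data $(\phi(0), c_1(0), J_1(0), w(0)) = (V, l_1, J_1, 0)$, and then evaluate the resulting solution at $x = x_1$.

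First, I would dispatch the two trivial components: since $\dot w = 1$ with $w(0)=0$ we immediately get $w(x)=x$, and since $\dot J_1 = 0$ the flux $J_1$ is constant along the orbit. With $w(x)=x$ in hand, the equation for $c_1$ decouples completely from $\phi$: it reads $\dot c_1 = -\tfrac{D_1+D_2}{2 D_1 D_2\, h(x)}\,J_1$, which integrates in closed form to
\begin{equation*}
c_1(x) = l_1 - \frac{D_1+D_2}{2 D_1 D_2}\, J_1 \int_0^x \frac{ds}{h(s)} = l_1 - \frac{D_1+D_2}{2 D_1 D_2}\, J_1 H(x).
\end{equation*}
Setting $x=x_1$ yields the stated formula for $c_1^{[1,-]}$.

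Next I would recover $\phi$ by eliminating $x$ in favor of $c_1$. Dividing the $\phi$-equation by the $c_1$-equation, the factors $J_1/(2D_1D_2 h(x))$ cancel cleanly, leaving the separated ODE
\begin{equation*}
\frac{d\phi}{dc_1} = \frac{D_2 - D_1}{z_1(D_1+D_2)\, c_1},
\end{equation*}
whose integration from the initial point $(V,l_1)$ gives
\begin{equation*}
\phi(x) - V = \frac{D_2 - D_1}{z_1(D_1+D_2)} \ln \frac{c_1(x)}{l_1} = -\frac{D_1-D_2}{z_1(D_1+D_2)} \ln \frac{c_1(x)}{l_1}.
\end{equation*}
Evaluating at $x=x_1$ and using the expression for $c_1^{[1,-]}$ obtained in the previous step gives the formula for $\phi^{[1,-]}$ in the lemma.

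There is essentially no serious obstacle here: the system is lower-triangular (the $J_1$ and $w$ equations are trivial, then $c_1$ decouples, and finally $\phi$ is driven only by $c_1$), and on the slow manifold the relation $z_1 c_1 + z_2 c_2 = 0$ is automatically preserved, so no consistency check beyond algebra is required. The only point worth stating carefully is the division step that turns $\phi$ into a function of $c_1$; this is legitimate on $(0,x_1)$ provided $c_1(x)$ remains positive along the orbit, which is guaranteed for the physically meaningful range of parameters where the orbit stays in the interior of the slow manifold ${\cal Z}_1$.
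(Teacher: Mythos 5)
Your proposal is correct and follows essentially the same route as the paper, which simply writes down the explicit solution $w(x)=x$, $c_1(x)=l_1-\tfrac{D_1+D_2}{2D_1D_2}J_1H(x)$, $\phi(x)=V-\tfrac{D_1-D_2}{z_1(D_1+D_2)}\ln\tfrac{c_1(x)}{l_1}$ of the lower-triangular system \eqref{Dslow1x1} and evaluates it at $x=x_1$. Your division step implicitly assumes $J_1\neq 0$, but the case $J_1=0$ gives the formulas trivially, so nothing is lost.
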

 
 \noindent
 \underline{\bf Slow system (\ref{Dslow1}) on $(x_1,x_2)$  with $Q_2\neq 0$:}
  Note that $h(w) >0$. Also, $c_k$'s are the concentrations of ion species. Therefore, we will be interested  in solutions with $c_k>0$ for $k=1,2$, and hence $ (z_1-z_2)z_1c_1 - z_2Q_2 = z_1^2c_1 + z_2^2c_2>0$. \\
 Hence, if we multiply $h(w) ((z_1-z_2)z_1c_1 - z_2Q_2) >0$ on the right hand side of system \eqref{Dslow1}, the   phase portrait remains the same and we have,
 \begin{align}\label{DSlow-y}
\begin{split}
  \dfrac{d}{dy} \phi=& \dfrac{D_1-D_2}{D_1D_2}z_1J_1,\quad
    \dfrac{d}{dy} c_1  =-\dfrac{(D_1+D_2)z_1^2J_1}{D_1 D_2}c_1+\dfrac{  z_1Q_2}{D_1}J_1,\\
\dfrac{d}{dy} J_1=&0,\quad  \dfrac{d}{dy} w=h(w) \big(2z_1^2c_1 + z_1Q_2\big) .
\end{split}
 \end{align}
The solution of \eqref{DSlow-y} with the initial condition $(\phi^{[1,+]}, c_1^{[1,+]}, J_1, x_1 )$ is,
 \begin{equation}\label{y-slow}
\begin{aligned}
\phi(y)=& \phi^{[1,+]}+\dfrac{D_1-D_2}{D_1D_2}z_1J_1y,\\
c_{1}(y)=& e^{-\frac{D_1+D_2}{D_1 D_2} z_1^2J_1y}c_1^{[1,+]} +\dfrac{D_2Q_2}{(D_1+{D_2}) z_1}\Big( e^{-\frac{D_1+D_2}{D_1 D_2} z_1^2J_1y}-1 \Big),\\
\int_{x_1}^{w} \dfrac{1}{h(s)}ds =&-\dfrac{2D_1 D_2z_1 c_1^{[1,+]}}{(D_1 +D_2) J_1}\Big(e^{-\frac{D_1+D_2}{D_1 D_2} z_1^2J_1y}-1 \Big)\\
&-\dfrac{2D_2 z_1Q_2}{D_1 + D_2 }\Big(D_1D_2\dfrac{ e^{-\frac{D_1+D_2}{D_1 D_2} z_1^2J_1y}-1}{(D_1+ D_2) z_1^2J_1}+y \Big)+z_1yQ_2.
\end{aligned}
\end{equation}
Assume $w (y^*)= x_2 $ for some $y^* >0$, then $\phi (y^*) = \phi^{[2,-]}$ and $c_1(y^*) =c_1^{[2,-]}$. Then, from \eqref{y-slow} one has the following result.
%
%

 \begin{lem}\label{lem-slow2}
 Over $(x_1,x_2)$ with $z_1c_1(x) + z_2c_2(x)+Q_2=0$ the slow dynamics system gives,
\begin{align*} 
&\phi^{[2,-]}= \phi^{[1,+]} + \dfrac{D_1 - D_2}{D_1 D_2} z_1J_1y^*,\\
& c_{1}^{[2,-]}= e^{-\frac{D_1+D_2 }{D_1 D_2} z_1^2J_1y^*}c_1^{[1,+]}+\dfrac{D_2Q_2}{(D_1 + D_2) z_1}\big( e^{-\frac{D_1+ D_2}{D_1 D_2} z_1^2J_1y^*}-1 \big),\\
&J_1= -D_1D_2\dfrac{2\big(c_{1}^{[2,-]} - c_{1}^{[1,+]} \big)-(\phi^{[2,-]} - \phi^{[1,+]})Q_2}{(D_1 + D_2)\big(H(x_2) - H(x_1) \big) } .
\end{align*}
 \end{lem}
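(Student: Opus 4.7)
The plan is to deduce all three assertions directly by evaluating the explicit solutions in (y-slow) at $y = y^{*}$, where $y^{*}$ is defined implicitly by the condition $w(y^{*}) = x_2$. The first two claims then follow by substitution into the $\phi$-component and $c_1$-component of (y-slow), combined with the matching conditions $\phi(y^{*}) = \phi^{[2,-]}$ and $c_1(y^{*}) = c_1^{[2,-]}$; these are immediate and need no further argument.

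For the third claim, the strategy is to substitute $w = x_2$ into the integral identity in (y-slow), which produces $H(x_2) - H(x_1)$ as a linear combination of $e^{-\beta y^{*}} - 1$ and $y^{*}$, with $\beta = (D_1 + D_2) z_1^{2} J_1 /(D_1 D_2)$. I would then regroup the resulting terms so that the exponential contributions collect into a common multiple of the bracket $\bigl[z_1 c_1^{[1,+]} + D_2 Q_2/((D_1+D_2)z_1)\bigr]$, while the $y^{*}$ contributions collapse via the identity $1 - 2D_2/(D_1+D_2) = (D_1-D_2)/(D_1+D_2)$ into a clean multiple of $z_1 y^{*} Q_2 (D_1 - D_2)/(D_1 + D_2)$.

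Next, I would eliminate $y^{*}$ and $e^{-\beta y^{*}} - 1$ by inverting the first two claims. The first claim gives
\[
z_1 y^{*} Q_2 \cdot \frac{D_1 - D_2}{D_1 + D_2} = \frac{D_1 D_2 Q_2 \bigl(\phi^{[2,-]} - \phi^{[1,+]}\bigr)}{(D_1 + D_2) J_1},
\]
and the second rearranges to
\[
\bigl(e^{-\beta y^{*}} - 1\bigr)\left[c_1^{[1,+]} + \frac{D_2 Q_2}{(D_1 + D_2) z_1}\right] = c_1^{[2,-]} - c_1^{[1,+]}.
\]
Substituting these into the regrouped integral identity (and invoking $z_1 = -z_2$ in the cation/anion convention to match the two brackets up to a $z_1$ factor) yields a linear equation of the form $(D_1 + D_2)\bigl(H(x_2) - H(x_1)\bigr) J_1 = -D_1 D_2 \bigl[2(c_1^{[2,-]} - c_1^{[1,+]}) - Q_2 (\phi^{[2,-]} - \phi^{[1,+]})\bigr]$, which rearranges to the stated expression for $J_1$.

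The principal obstacle is purely computational bookkeeping: several nested fractions in $D_1, D_2, z_1, Q_2$ must be combined correctly, and one must recognize the two decisive simplifications — the cancellation producing the factor $(D_1 - D_2)/(D_1 + D_2)$ and the common bracket on the exponential term — that align the two groups of terms with $\phi^{[2,-]} - \phi^{[1,+]}$ and $c_1^{[2,-]} - c_1^{[1,+]}$, respectively. Once these two key identities are in hand, the remaining manipulation is automatic.
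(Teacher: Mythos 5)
Your proposal is correct and takes essentially the same route as the paper, whose proof of this lemma is just the one-line remark preceding it: evaluate the explicit solution \eqref{y-slow} at $y=y^*$ (giving the first two identities immediately), then substitute $w(y^*)=x_2$ into the integral identity and eliminate $y^*$ and $e^{-\beta y^*}-1$ via those two identities — your regrouping into the common bracket $c_1^{[1,+]}+\tfrac{D_2Q_2}{(D_1+D_2)z_1}$ and the $(D_1-D_2)/(D_1+D_2)$ cancellation is exactly the computation the paper leaves implicit. The only caveat is a residual factor of $z_1$ on the term $c_{1}^{[2,-]}-c_{1}^{[1,+]}$ in the resulting flux formula (present in the matching system \eqref{Matching} but absent from the lemma as stated), which is an inconsistency in the paper's own $z_1$-bookkeeping rather than a flaw in your argument.
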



 \noindent
 \underline{\bf Slow system (\ref{Dslow1}) on $(x_2,x_3)$  with $Q_3= 0$:}
 The slow dynamics system is \eqref{Dslow1x1} and the solution with the initial condition $(\phi^{[2,+]} , c_1^{[2,+]}, J_1, x_2 )$ is,
 \begin{align*}
c_1(x) =& c_1^{[2,+]} - \frac{D_1 + D_2 }{2D_1 D_2 } J_1 \big(H(x) - H(x_2) \big),\quad
 \phi(x)  = \phi^{[2,+]} - \frac{D_1 -D_2}{z_1(D_1+ D_2)} \ln \dfrac{c_1(x)}{c_1^{[2,+]}}.
\end{align*}
 
Evaluating the solution at $w = x =1$ we get the following lemma.
\begin{lem}\label{lem-slow3}
 Over $(x_2,1)$ with $z_1c_1(x) + z_2c_2(x)=0$ the slow dynamics system gives,
\begin{equation*}
 c_1^{[2,+]} = r_1 + \frac{D_1 +D_2  }{2D_1 D_2} J_1 \big(H(1) - H(x_2) \big),\quad
\phi^{[2,+]} =  \frac{D_1 -D_2}{z_1(D_1 + D_2)} \ln \dfrac{r_1}{c_1^{[2,+]}}.
 \end{equation*}
 \end{lem}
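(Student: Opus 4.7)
The plan is to mimic the proof of Lemma~\ref{lem-slow1}, adapted to the interval $(x_2,1)$ with the boundary conditions now applied at the right endpoint. Since $Q_3=0$ on $(x_2,1)$, the slow system~\eqref{Dslow1} reduces to exactly the form~\eqref{Dslow1x1}, so the same direct integration scheme goes through and no rescaled time variable (of the type $y$ used on the middle interval in Lemma~\ref{lem-slow2}) is required.

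First I would integrate \eqref{Dslow1x1} forward from $w=x_2$ with the initial data $(\phi^{[2,+]},c_1^{[2,+]},J_1,x_2)$. Since $\dot w=1$ and $J_1$ is constant along trajectories, the concentration equation integrates immediately to
\[
c_1(x) = c_1^{[2,+]} - \frac{D_1+D_2}{2D_1D_2}\,J_1\,\big(H(x)-H(x_2)\big).
\]
For the potential I would not integrate $\dot\phi$ directly against $x$ but instead take the ratio $d\phi/dc_1=\dot\phi/\dot c_1 = (D_2-D_1)/\big(z_1(D_1+D_2)\,c_1\big)$, which eliminates $h(w)$ and separates to give
\[
\phi(x) = \phi^{[2,+]} - \frac{D_1-D_2}{z_1(D_1+D_2)}\ln\frac{c_1(x)}{c_1^{[2,+]}}.
\]
These two formulas coincide with those displayed immediately above the lemma statement.

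The last step is to evaluate at $x=1$ and invoke the boundary data from~\eqref{BV4cp}, namely $\phi(1)=0$ and $c_1(1)=r_1$; the latter is consistent with the slow manifold $z_1c_1+z_2c_2=0$ because the electroneutrality condition~\eqref{neutral} guarantees no boundary layer at $x=1$. Substituting these two values into the formulas above and solving algebraically for $c_1^{[2,+]}$ and $\phi^{[2,+]}$ yields the two identities claimed in the lemma. There is no real obstacle here; the only care required is with sign conventions, since the slow trajectory is initialized at the \emph{left} endpoint $x_2$ while the matching data are imposed at the \emph{right} endpoint $x=1$, which flips the signs of the $H$-differences and inverts the logarithmic argument relative to the parallel calculation in Lemma~\ref{lem-slow1}.
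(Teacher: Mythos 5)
Your proposal is correct and follows essentially the same route as the paper: integrate the slow system \eqref{Dslow1x1} on $(x_2,1)$ from the initial data $(\phi^{[2,+]},c_1^{[2,+]},J_1,x_2)$, obtain exactly the two displayed solution formulas, and then impose $c_1(1)=r_1$, $\phi(1)=0$ and solve for $c_1^{[2,+]}$ and $\phi^{[2,+]}$. The only cosmetic difference is that you make explicit the $d\phi/dc_1$ separation used to integrate the potential, which the paper leaves implicit.
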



\subsection{Matching for Zero-current and Singular Orbits on $[0,1]$.}
The final step for the construction of a connecting orbit over the whole interval $[0,1]$ is to match the three singular orbits from previous section at the points $x=x_1$ and $x=x_2$. 
The matching conditions are $u_-^{[1]}=u_+^{[1]},~u_-^{[2]}=u_+^{[2]}$, and that $J_1$ has to be the same on all three subintervals. 
Recall that we only consider   the case where $n=2$ (two ion species) with $z_1 = -z_2$.  
It follows from Lemmas \ref{fast1}  and \ref{fast2}, and equations in Lemma \ref{lem-slow1} to Lemma \ref{lem-slow3},
\begin{equation}\label{Matching}
\begin{aligned}
&{ c_1^{[1]}e^{z_1(\phi^{[1]}-\phi^{[1,-]})}-c_2^{[1]}e^{-z_1(\phi^{[1]}-\phi^{[1,-]})}=0,}\\
& { c_1^{[2]}e^{z_1(\phi^{[2]}-\phi^{[2,+]})}- c_2^{[2]}e^{-z_1(\phi^{[2]}-\phi^{[2,+]})} =0,}\\
& { z_1c_1^{[1]}e^{z_1( \phi^{[1]}-\phi^{[1,+]})}- z_1c_2^{[1]}e^{-z_1( \phi^{[1]}-\phi^{[1,+]})}+ Q_2=0,}\\
&  { z_1c_1^{[2]}e^{z_1( \phi^{[2]}-\phi^{[2,-]})}- z_1c_2^{[2]}e^{-z_1( \phi^{[2]}-\phi^{[2,-]})}+ Q_2=0,}\\
&{2c_1^{[1,-]}}=c_1^{[1]}e^{z_1( \phi^{[1]}-\phi^{[1,+]})} + c_2^{[1]}e^{{-z_1}( \phi^{[1]}-\phi^{[1,+]})} +Q_2(\phi^{[1]}-\phi^{[1,+]}),\\
&{2c_1^{[2,+]}}=c_1^{[2]}e^{z_1( \phi^{[2]}-\phi^{[2,-]})} + c_2^{[2]}e^{{-z_1}( \phi^{[2]}-\phi^{[2,-]})} +Q_2(\phi^{[2]}-\phi^{[2,-]}),\\
&\dfrac{J_1}{D_1 D_2} = -\dfrac{2(c_1^{[1,-]}- l_1)}{(D_1+D_2 ) H(x_1)} = -\dfrac{2(r_1- c_1^{[2,+]})}{(D_1 + D_2  )(H(1) - H(x_2))},\\
&\hspace*{.41in} = -\dfrac{2z_1(c_1^{[2,-]}- c_1^{[1,+]}) - (\phi^{[2,-]} - \phi^{[1,+]})z_1Q_2}{(D_1 +D_2 )(H(x_2)-H(x_1))},\\
&\phi^{[2,-]}= \phi^{[1,+]} + \dfrac{D_1 - D_2}{D_1 D_2} z_1J_1y^*,\\
&c_{1}^{[2,-]}= e^{-\frac{D_1 + D_2}{D_1 D_2} z_1^2J_1y^*}c_1^{[1,+]}+\dfrac{D_2 Q_2}{(D_1 +D_2) z_1}\Big( e^{-\frac{D_1+ D_2}{D_1 D_2} z_1^2J_1y^*}-1 \Big),
\end{aligned}
\end{equation}
where,
\begin{equation}\label{Matching2}
\begin{aligned}
c_1^{[1,-]}=&c_1^{[1]}e^{z_1( \phi^{[1]}-\phi^{[1,-]})} = \sqrt{c_1^{[1]} c_2^{[1]}} ,\quad
c_2^{[1,-]}= c_2^{[1]}e^{z_2( \phi^{[1]}-\phi^{[1,-]})} = \sqrt{c_1^{[1]} c_2^{[1]}},\\
c_1^{[2,+]}=& c_1^{[2]}e^{z_1( \phi^{[2]}-\phi^{[2,+]})} =  \sqrt{c_1^{[2]} c_2^{[2]}},\quad
c_2^{[2,+]}=  c_2^{[2]}e^{z_2( \phi^{[2]}-\phi^{[2,+]})} ={ \sqrt{c_1^{[2]} c_2^{[2]}}},\\
c_1^{[1,+]}=&c_1^{[1]}e^{z_1( \phi^{[1]}-\phi^{[1,+]})}, \quad c_2^{[1,+]}=c_2^{[1]}e^{z_2( \phi^{[1]}-\phi^{[1,+]})},\\
c_1^{[2,-]}=&c_1^{[2]}e^{z_1( \phi^{[2]}-\phi^{[2,-]})}, \quad c_2^{[2,-]}=c_2^{[2]}e^{z_2( \phi^{[2]}-\phi^{[2,-]})},\\
\phi^{[1,-]} =& V - \frac{D_1 - D_2}{z_1(D_1  + D_2)} \ln \frac{c_1^{[1,-]}}{l_1},\quad \phi^{[2,+]} 
=  \frac{D_1 - D_2}{z_1(D_1 +D_2)} \ln \frac{r_1}{c_1^{[2,+]}}.
\end{aligned}
\end{equation}

\begin{rem} In \eqref{Matching}, the unknowns are: $\phi^{[1]},~\phi^{[2]},~ c_1^{[1]},~c_2^{[1]},~ c_1^{[2]},~c_2^{[2]},~J_1,~\phi^{[1,+]},~\phi^{[2,-]},~y^*$ and $Q_2$  that is, there are eleven unknowns that matches the total number of equations on \eqref{Matching}. \qed
\end{rem}

It follows from last two equations of \eqref{Matching2},
\begin{equation}\label{phi_al,br}
\begin{aligned}
&\phi^{[1]}- \phi^{[1,-]} = \dfrac{D_1 +D_2}{2D_2} \big(\phi^{[1]}- V \big) + \frac{D_1 - D_2}{2z_1D_2}\ln \dfrac{c_1^{[1]}}{l_1} ,\\
&\phi^{[2]}- \phi^{[2,+]} = \dfrac{D_1  +D_2 }{2D_2} \phi^{[2]} - \frac{D_1 - D_2}{2z_1D_2 }\ln \dfrac{r_1}{c_1^{[2]}} .
\end{aligned}
\end{equation}


%
%


\section{Reduced System for Zero-current with $z_1=-z_2>0$.}\label{sec-ReducedSys}
\setcounter{equation}{0}
The matching system \eqref{Matching} is nonlinear and challenging to analyze in general. In \cite{ELX15}, for equal diffusion constants $D_k$'s, the study of reversal potential and reversal permanent charges has been successfully carried out for a general $n$.   With general $D_k$'s the problem becomes overwhelmingly harder, at least, technically, even for the case that we will treat here where $n=2$ with $z_1=-z_2$.  


In \cite{EL07}, the authors introduced two intermediate variables that allow a significant reduction of the governing system of matching (\ref{Matching}) without zero current assumption. We will use the same intermediate variables for our reduction. Thus, we set
 \begin{align}\label{AandB}
 A=\sqrt{c_1^{[1]}c_2^{[1]}}\;\mbox{ and }\; B=\sqrt{c_1^{[2]}c_2^{[2]}}.
 \end{align}
 Note that $A$ and $B$ are the geometric mean of concentrations at $x=x_1$ and $x=x_2$ respectively.
 It will be shown in (\ref{Bphi})in the appendix  that  $B=B(A) = \dfrac{1-\beta}{\alpha}(l-A)+r$. We will thus treat $B$ as a function of $A$ instead of an independent variable from now on.
 We  denote
 \begin{align}\label{lrp}
 l_1=l_2=l, \quad r_1=r_2=r, \quad Q_2=2Q_0,\quad \alpha = \dfrac{H(x_1)}{H(1)}, \quad \beta = \dfrac{H(x_2)}{H(1)}, \quad \theta=\frac{D_2- D_1}{D_2 + D_1}.
 \end{align}
 Note that $l$ and $r$ are the concentrations of the ionic solutions at the left and right bathes, respectively. Recall $H(x)=\int_0^xh^{-1}(s)ds$ with $h(x)$ being the (dimensionless) area of the cross-section of the channel over $x$.  In the simplest case where $h(x)$ is a constant, $H(x)$ is the ratio between the length of the portion $[0,x]$ of the channel over the cross-section area of the channel.   So $0<\alpha<\beta<1$ are normalized factor associated to the potions $[0,x_1]$ and $[0,x_2]$, respectively (see, e.g. discussions at the end of Section 2 in \cite{ZEL}). The quantity $\theta\in (-1,1)$ is a measurement of unequal of $D_1$ and $D_2$, in particular, $\theta=0$ if and only if $D_1=D_2$.
 
 The vector  $(Q_0, V, \theta, \alpha, \beta,  l,r)$ contains major parameters which affect the behavior of the system through their nonlinear interactions.  In the sequel, we will always fix the parameters $\alpha$, $\beta$, $l$ and $r$, and focus on the roles of $(V, Q_0, \theta)$.  One can see that  the roles of $(\alpha, \beta, l,r)$ can be studied within our analysis framework.
    For ease of notation,  we also introduce
 \begin{align}\label{abp} 
  S_a:= \sqrt{Q_0^2+z_1^2A^2}, \quad S_b:= \sqrt{Q_0^2+z_1^2B^2}, 
 \quad N: =  \dfrac{\beta - \alpha}{\alpha}z_1(A-l)  + S_a -S_b.
\end{align}

The most critical ingredient for our analysis is the following result on a reduced system of the matching system \eqref{Matching}.
\begin{prop}\label{rSys} The matching system \eqref{Matching}  for zero current $I=0$ can be reduced to 
\begin{align}\label{G1G2Sys}
G_1(A, Q_0,\theta)=z_1V\;\mbox{ and }\; G_2(A,Q_0,\theta)=0,
\end{align}
  where
\begin{equation}\label{G}
\begin{aligned}
G_1(A, Q_0,\theta)=& \theta \Big(\ln\dfrac{S_a + \theta Q_0}{S_b + \theta Q_0} + \ln\dfrac{l}{r}\Big)  - (1+\theta)\ln \dfrac{A}{B} + \ln \dfrac{S_a -Q_0}{S_b -Q_0},\\
G_2(A, Q_0,\theta)=& \theta Q_0\ln\dfrac{S_a+\theta Q_0}{S_b+\theta Q_0}-N.
\end{aligned}
\end{equation}	
\end{prop}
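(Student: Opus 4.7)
The plan is to take the eleven-equation matching system and reduce it in a chain, eliminating one family of variables at each step, until only $A$, $B$, $Q_0$, $\theta$ (and $V$) remain; then use the relation $B=B(A)$ to collapse to two equations. The two intermediate variables $A=\sqrt{c_1^{[1]}c_2^{[1]}}$ and $B=\sqrt{c_1^{[2]}c_2^{[2]}}$ are designed so that the individual endpoint concentrations $c_k^{[j]}$ drop out of every matching condition.

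First I would handle the fast (boundary) layers. The first two lines of \eqref{Matching}, together with the explicit relations in \eqref{Matching2}, immediately give $c_1^{[1,-]}=c_2^{[1,-]}=A$ and $c_1^{[2,+]}=c_2^{[2,+]}=B$. The third and fourth lines of \eqref{Matching} are quadratics in $e^{z_1(\phi^{[1]}-\phi^{[1,+]})}$ and $e^{z_1(\phi^{[2]}-\phi^{[2,-]})}$; selecting the positive root yields the closed forms
\[
c_1^{[1,+]}=\frac{S_a-Q_0}{z_1},\quad c_2^{[1,+]}=\frac{S_a+Q_0}{z_1},\quad c_1^{[2,-]}=\frac{S_b-Q_0}{z_1},\quad c_2^{[2,-]}=\frac{S_b+Q_0}{z_1}.
\]
Taking ratios against $c_1^{[1,-]}$ and $c_1^{[2,+]}$ in \eqref{Matching2} then gives the clean jumps
\[
z_1(\phi^{[1,-]}-\phi^{[1,+]})=\ln\frac{S_a-Q_0}{z_1 A},\qquad z_1(\phi^{[2,-]}-\phi^{[2,+]})=\ln\frac{z_1 B}{S_b-Q_0},
\]
the point being that the unknown $c_1^{[1]}$ cancels. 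The boundary pieces $\phi^{[1,-]}=V+(\theta/z_1)\ln(A/l)$ and $\phi^{[2,+]}=(\theta/z_1)\ln(B/r)$ are read directly from the last two lines of \eqref{Matching2}.

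Next I would attack the internal slow layer, namely the last two lines of \eqref{Matching}. Setting $\lambda=\frac{D_1+D_2}{D_1D_2}z_1^2 J_1 y^*$ and shifting $c_1$ by $(1+\theta)Q_0/z_1=\frac{Q_2 D_2}{(D_1+D_2)z_1}$ turns the affine exponential formula for $c_1^{[2,-]}$ into a pure exponential; the shifted endpoint values are exactly $(S_a+\theta Q_0)/z_1$ and $(S_b+\theta Q_0)/z_1$, which gives $e^{-\lambda}=(S_b+\theta Q_0)/(S_a+\theta Q_0)$. The $\phi$-equation then reads $\phi^{[2,-]}-\phi^{[1,+]}=-(\theta/z_1)\lambda$, hence
\[
z_1(\phi^{[2,-]}-\phi^{[1,+]})=-\theta\ln\frac{S_a+\theta Q_0}{S_b+\theta Q_0}.
\]
Computing $\phi^{[2,-]}-\phi^{[1,+]}$ a second way by telescoping through $\phi^{[2,+]}-\phi^{[1,-]}$ using the formulas above, and equating the two, all the $A,B,S_a,S_b$ terms recombine and produce exactly $G_1(A,Q_0,\theta)=z_1 V$.

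Finally, the seventh line of \eqref{Matching} provides three expressions for $J_1/(D_1D_2)$. Equating the first two gives the already-noted $B=\tfrac{1-\beta}{\alpha}(l-A)+r$, so $B$ is eliminated as an independent variable. Equating the first and third, multiplying through by $z_1$, and substituting $z_1(c_1^{[2,-]}-c_1^{[1,+]})=S_b-S_a$ together with the value of $z_1 Q_0(\phi^{[2,-]}-\phi^{[1,+]})$ found above, produces
\[
\theta Q_0\ln\frac{S_a+\theta Q_0}{S_b+\theta Q_0}=\frac{\beta-\alpha}{\alpha}z_1(A-l)+S_a-S_b=N,
\]
which is $G_2(A,Q_0,\theta)=0$. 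The main obstacle is purely organizational rather than conceptual: keeping track of the cancellations so that the shift by $(1+\theta)Q_0/z_1$ is recognized as the correct one, since without it the $y^*$-dependence in lines eight and nine of \eqref{Matching} cannot be eliminated in closed form and the entire reduction stalls.
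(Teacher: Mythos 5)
Your reduction is correct and rests on the same core devices as the paper's own proof: the intermediate variables $A,B$, the affine relation $B=\tfrac{1-\beta}{\alpha}(l-A)+r$ extracted from the flux equalities, and the shift of $c_1$ by $(1+\theta)Q_0/z_1$ that turns the internal-layer recursion into $e^{-\lambda}=(S_b+\theta Q_0)/(S_a+\theta Q_0)$ and hence yields $z_1(\phi^{[2,-]}-\phi^{[1,+]})=-\theta\ln\frac{S_a+\theta Q_0}{S_b+\theta Q_0}$; your derivation of $G_2$ from the first and third flux expressions is exactly the paper's. Where you genuinely diverge is in the route to $G_1$: the paper first solves the $u$-matching conditions (the fifth and sixth equations of \eqref{Matching}) for $c_1^{[1]},c_1^{[2]}$, writes $\phi^{[1]},\phi^{[2]}$ out explicitly, and assembles $\phi^{[2]}-\phi^{[1]}$ before converting to $\phi^{[2,-]}-\phi^{[1,+]}$; you instead telescope $\phi^{[2,-]}-\phi^{[1,+]}$ through $\phi^{[2,+]}$ and $\phi^{[1,-]}$, computing each jump as a logarithm of a concentration ratio so that $c_1^{[1]},c_1^{[2]},\phi^{[1]},\phi^{[2]}$ are never determined. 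This is shorter and avoids the $u$-matching conditions entirely; what it gives up is the explicit recovery of those remaining unknowns, which the paper carries along and which one needs to assert full equivalence of the eleven-equation system with \eqref{G1G2Sys} (rather than only that \eqref{G1G2Sys} is a consequence of it). In your scheme the unused fifth and sixth equations are precisely what determine $\phi^{[1]},\phi^{[2]}$ (and then $c_1^{[1]},c_1^{[2]}$) once $A$ is known, so the equivalence still holds; it would be worth one sentence to say so.
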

\begin{proof}  We defer the proof to the appendix Section \ref{Appendix}.
\end{proof}

At this moment, we would like to make some comments on the above reduction. 

\begin{rem}\label{RMatch}    The   reduction of \eqref{Matching} to system (\ref{G1G2Sys})   is critical for the remaining analysis. We   comment   that there is no practical principle to lead the reduction and no criterion for a `good' final form of a reduction. In general, there could be infinitely many different forms of the reduction.     It turns out the above reduction works well.
	
{ For a uniform $h(x)$, the quantity $H(x)$ is the ratio of the length with the cross-section area of the potion of the channel over $[0, x]$. The quantity $H(x)$ has its origin in Ohm law for
	the resistance of a uniform resistor. It appears that the quantities $a$ and $b$ together with the value $Q_0$ are the chief characteristics of the shape and permanent charge of the channel formation.}
 
For the special case where $h=1$, $x_1=1/3$, $x_2=2/3$, $z_1=1=-z_2$, and $D_1=D_2$, a reduced system consists of $F(A)=0$ in (48) in  \cite{EL07} and $I=0$. One can get different equivalent forms and,   as expected, one equivalent reduced system can be put into exactly the same as the one stated in Proposition  \ref{rSys}. We also note that, for a given $Q_0$, one cannot solve for $A$ from either  $F(A)=0$ or $I=0$ uniquely. But, we will show that one can solve for $A$ from $G_2=0$ uniquely -- a critically important indication that the specific form of system (\ref{G1G2Sys}) is special. 
 \qed
\end{rem}

  We now prepare several properties of the functions $G_1 $ and $G_2$ to be used later on.
\begin{lem}\label{lem-parG1G2} One has 
\begin{itemize}
\item[(i)] $\partial_{A}G_1(A, Q_0,\theta)$ has the same sign as that of $Q_0$,
\item[(ii)] $\partial_{Q_0}G_1(A, Q_0,\theta)$ has  the same sign as that of $l-r$, 
\item[(iii)] $\partial_{\theta} G_1 (A, Q_0,\theta)$ has the same  sign as that of $l-r$,
\item[(iv)] $\partial_A G_2 (A, Q_0,\theta)<0$,
\item[(v)] if $\theta Q_0> 0$, then $\partial_{Q_0} G_2(A, Q_0,\theta)$ has the same sign as that of $(l-r)Q_0$,
\item[(vi)] $\partial_{\theta} G_2 (A, Q_0,\theta)$ has the same  sign as that of $(l-r)Q_0$.
\end{itemize}
\end{lem}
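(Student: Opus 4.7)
The plan is to differentiate $G_1$ and $G_2$ directly, then apply the algebraic consequences of $S_t^2 = Q_0^2 + z_1^2 t^2$ (for $t = A, B$) to collapse each expression into a form whose sign is manifest. The chain-rule inputs are $\partial_A S_a = z_1^2 A/S_a$, $\partial_A S_b = B'(A)\, z_1^2 B/S_b$ with $B'(A) = -(1-\beta)/\alpha < 0$, and $\partial_{Q_0} S_a = Q_0/S_a$, $\partial_{Q_0} S_b = Q_0/S_b$; crucially, $B = (1-\beta)(l-A)/\alpha + r$ depends on neither $Q_0$ nor $\theta$. Two identities I will use repeatedly are $\partial_{Q_0} \ln(S_t - Q_0) = -1/S_t$ and the telescoping
$\theta Q_0 z_1^2 t /[S_t(S_t + \theta Q_0)] - z_1^2 t/S_t = -z_1^2 t/(S_t + \theta Q_0)$, both immediate from $S_t^2 - Q_0^2 = z_1^2 t^2$.

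Parts (i) and (iv) should fall out cleanly. For (iv), grouping $A$- and $B$-contributions separately, I expect to obtain
$$\partial_A G_2 = -\frac{z_1^2 A}{S_a + \theta Q_0} - \frac{(1-\beta)\, z_1^2 B}{\alpha (S_b + \theta Q_0)} - \frac{(\beta - \alpha)\, z_1}{\alpha},$$
manifestly negative since $z_1 > 0$, $0 < \alpha < \beta$, and $S_t + \theta Q_0 > 0$ (because $S_t > |Q_0| \ge |\theta Q_0|$). The parallel computation for (i) should yield
$$\partial_A G_1 = \frac{Q_0 (1-\theta^2)}{A (S_a + \theta Q_0)} + \frac{(1-\beta)\, Q_0 (1-\theta^2)}{\alpha B (S_b + \theta Q_0)},$$
both terms sharing the sign of $Q_0$. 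Parts (ii), (iii), (v), (vi) should reduce, after analogous simplification, to expressions proportional to $S_a - S_b$ (equivalently $A - B$, since $S_t$ is strictly increasing in $|t|$); for instance,
$$\partial_{Q_0} G_1 = \frac{(1-\theta^2)(S_a - S_b)}{(S_a + \theta Q_0)(S_b + \theta Q_0)}.$$
For (v) and (vi), where the derivative mixes a logarithm with a rational term, the key device is to recognize the combination as $Q_0 \bigl[\Phi(S_a + \theta Q_0) - \Phi(S_b + \theta Q_0)\bigr]$ for $\Phi(s) = \ln s + \theta Q_0/s$, whose derivative $\Phi'(s) = (s - \theta Q_0)/s^2$ is positive on the interval containing both arguments, so monotonicity of $\Phi$ transfers the sign to $S_a - S_b$.

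The main obstacle is the last step: converting the $\sign(A - B)$ conclusions into the stated $\sign(l - r)$ claims. This requires using that in the admissible regime constrained by $G_2 = 0$, the difference $A - B$ inherits the sign of $l - r$. At the base value $Q_0 = 0$ the constraint $G_2 = 0$ forces $A = (1-\alpha) l + \alpha r$ and $B = (1-\beta) l + \beta r$, so $A - B = (\beta - \alpha)(l - r)$; extending this sign equality to all admissible $(Q_0, \theta)$ is a continuity argument powered by $\partial_A G_2 < 0$ from part (iv), which guarantees a smooth dependence $A = A(Q_0, \theta)$ along the zero set of $G_2$ and precludes a sign change of $A - B$ as $(Q_0, \theta)$ varies.
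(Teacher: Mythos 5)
Your derivative computations agree with the paper's: the formulas you anticipate for $\partial_A G_1$, $\partial_A G_2$, and $\partial_{Q_0}G_1$ are exactly those in the paper, and your function $\Phi(s)=\ln s+\theta Q_0/s$ evaluated at $s=S_t+\theta Q_0$ is precisely the paper's $g(S_t)$, with the same monotonicity argument ($\Phi'(s)=(s-\theta Q_0)/s^2>0$ on the relevant range) transferring the sign to $S_a-S_b$ and hence to $A-B$. Two small caveats on the middle of your argument: for (v), $\partial_{Q_0}G_2$ is \emph{not} of the form $Q_0\bigl[\Phi(S_a+\theta Q_0)-\Phi(S_b+\theta Q_0)\bigr]$ (that identity is what gives (vi)); rather $\partial_{Q_0}G_2=\theta\ln\frac{S_a+\theta Q_0}{S_b+\theta Q_0}+\frac{(1-\theta^2)Q_0(S_a-S_b)}{(S_a+\theta Q_0)(S_b+\theta Q_0)}$, and the hypothesis $\theta Q_0>0$ is used so that the two summands \emph{separately} carry the sign of $Q_0(S_a-S_b)$. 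That is easily repaired and is how the paper handles it.

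The genuine gap is in your last step, converting $\sign(A-B)$ into $\sign(l-r)$. You anchor the sign at $Q_0=0$ and then claim that smooth dependence of $A(Q_0,\theta)$ (via $\partial_AG_2<0$ and the implicit function theorem) ``precludes a sign change of $A-B$.'' Continuity alone cannot preclude $A-B$ from passing through zero as $(Q_0,\theta)$ varies; you must show $A-B\neq 0$ everywhere on the zero set of $G_2$ when $l\neq r$. The missing one-line ingredient — which is exactly what the paper's Theorem \ref{AonQ}(b),(c) supplies — is to observe that $A=B$ forces $A=A^*=\frac{(1-\beta)l+\alpha r}{1-\beta+\alpha}$, at which point $S_a=S_b$ and
\begin{equation*}
G_2(A^*,Q_0,\theta)=\frac{\beta-\alpha}{\alpha}z_1(l-A^*)=\frac{\beta-\alpha}{1-\beta+\alpha}\,z_1(l-r)\neq 0 \quad\text{for } l\neq r,
\end{equation*}
so by the strict monotonicity of $G_2$ in $A$ from part (iv) the unique root $A(Q_0,\theta)$ never equals $A^*$; combined with connectedness of the $(Q_0,\theta)$-domain and your base-point evaluation, the sign of $A-B$ is then locked to that of $l-r$. (In fact the same evaluation, together with the sign of $G_2(l,Q_0,\theta)$, gives the paper's stronger sandwich $r<B<A^*<A<l$ for $l>r$ without any continuation argument.) Note also that this means parts (ii), (iii), (v), (vi) are really sign statements along the solution branch $A=A(Q_0,\theta)$ of $G_2=0$, not for arbitrary $A$; you correctly flag this restriction, and the paper makes the same implicit restriction by citing Theorem \ref{AonQ}.
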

\begin{proof}
Partial derivatives of $G_1$ and $G_2$ with respect to $Q_0$ and $A$ are,
\begin{align}\label{parG1G2}\begin{split}
\partial_{A}G_1(A, Q_0,\theta) =& (1-\theta^2)Q_0\Big(\dfrac{1}{A(S_a+\theta Q_0)} +\frac{1-\beta}{\alpha} \dfrac{1}{B(S_b+\theta Q_0)}\Big),\\
\partial_{Q_0}G_1(A, Q_0,\theta) =&  \dfrac{(1-\theta^2)(S_a-S_b)}{(S_a+\theta Q_0)(S_b+\theta Q_0)} ,\\
\partial_{\theta} G_1 (A, Q_0,\theta) =&g(S_a)-g(S_b) +\ln \dfrac{l}{r} - \ln \dfrac{A}{B},\\
 \partial_A G_2 (A, Q_0,\theta)=& -\frac{1-\beta}{\alpha}\dfrac{z_1^2B}{S_b+\theta Q_0} - \dfrac{z_1^2A}{S_a+\theta Q_0} - \dfrac{\beta - \alpha}{\alpha}z_1,\\
\partial_{Q_0} G_2(A, Q_0,\theta) =&\theta \ln\dfrac{S_a+\theta Q_0}{S_b+\theta Q_0}+\dfrac{(1-\theta^2)Q_0(S_a-S_b)}{(S_a+\theta Q_0)(S_b+\theta Q_0)},\\
\partial_{\theta} G_2 (A, Q_0,\theta) =&Q_0(g(S_a)-g(S_b)), 
\end{split}
\end{align} 
where 
\begin{equation}\label{gg}
g(X) := \ln (X+\theta Q_0)   + \dfrac{\theta Q_0}{X+\theta Q_0}.
\end{equation}
All statements except those for  signs of $\partial_{\theta}G_k$'s follow directly from (\ref{parG1G2}).   For  signs of $\partial_{\theta}G_k$'s, note that
  \[g'(X)=\dfrac{X}{(X+\theta Q_0)^2}>0\;\mbox{ for }\;X>0.\]
  So $g(S_a)-g(S_b)$ has the same sign as that of $S_a-S_b$.
    It is obvious that  $S_a-S_b$ has the same sign as that of $A-B$ and  it will be shown in Theorem \ref{AonQ} that $l-r$ and $A-B$ have the same sign too. The statements on the signs of $\partial_{\theta} G_k$'s then follow.
  \end{proof}

 
 \subsection{The Solution $A=A(Q_0,\theta)$  of $G_2(A,Q_0,\theta)=0$.}

Recall  that { $A$ and $B$ are the geometric mean of concentrations at $x=x_1$ and $x=x_2$ respectively, and} $B= \dfrac{1-\beta}{\alpha}(l-A)+r$. One has $B=A$ if and only if
{$\displaystyle{A=A^*=\frac{(1-\beta)l+\alpha r}{1-\beta+\alpha}}$}.
It is clear that $l<A^*<r$ if $l<r$ and $l>A^*>r$ if $l>r$.

\begin{thm}\label{thmA(Q)} For any given $(Q_0,\theta)$,   $G_2(A,Q_0,\theta)=0$ has a unique solution $A=A(Q_0,\theta)$. 
\end{thm}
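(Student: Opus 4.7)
The plan is to combine monotonicity in $A$ with an intermediate value argument. Uniqueness is immediate from Lemma~\ref{lem-parG1G2}(iv): since $\partial_A G_2 < 0$ on the natural domain $\{A > 0,\ B(A) > 0\}$, the map $A \mapsto G_2(A, Q_0, \theta)$ is strictly decreasing, so at most one zero exists.

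For existence, I would evaluate $G_2$ at two well-chosen admissible values of $A$ and show the two values have opposite signs. The natural choices are $A = l$ (where $B(l) = r$) and $A = A^* := \frac{(1-\beta)l + \alpha r}{1 - \beta + \alpha}$, the unique value making $B(A^*) = A^*$. At $A^*$ the symmetry $S_a = S_b$ kills the logarithmic term in $G_2$, and direct substitution gives
\[
G_2(A^*, Q_0, \theta) \;=\; -\,\frac{(\beta-\alpha)\,z_1\,(r - l)}{1-\beta+\alpha},
\]
whose sign is that of $l - r$.

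The heart of the argument, and the step I expect to be the main obstacle, is pinning down the sign of $G_2(l, Q_0, \theta) = \theta Q_0 \ln\frac{S_a+\theta Q_0}{S_b+\theta Q_0} - (S_a - S_b)$. I would apply the mean value theorem to $t \mapsto \ln(t + \theta Q_0)$ to write $\ln\frac{S_a + \theta Q_0}{S_b + \theta Q_0} = \frac{S_a - S_b}{c + \theta Q_0}$ for some $c$ between $S_a$ and $S_b$; because $|\theta|<1$ forces $c \ge \min(S_a,S_b) \ge |Q_0| > |\theta Q_0|$, the denominator $c + \theta Q_0$ is strictly positive. A short algebraic simplification then reduces $G_2(l)$ to $-(S_a - S_b)\,\frac{c}{c + \theta Q_0}$, which has the sign of $r - l$ (since at $A = l$, $B = r$, and $S_a - S_b$ inherits the sign of $l - r$). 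Consequently $G_2(l)$ and $G_2(A^*)$ have opposite signs whenever $l \ne r$, and continuity of $G_2$ together with IVT produces a root strictly between $l$ and $A^*$; when $l = r$, $A^* = l = r$ and $G_2(A^*) = 0$ directly. Combined with the monotonicity-based uniqueness, this yields the desired well-defined $A = A(Q_0, \theta)$. The subtle point is precisely the positivity of $c + \theta Q_0$ uniformly in $(Q_0,\theta)\in \mathbb{R}\times(-1,1)$, which is what makes the comparison between $G_2(l)$ and $G_2(A^*)$ close.
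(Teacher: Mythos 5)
Your proof is correct. The uniqueness half is identical to the paper's (both rest on Lemma~\ref{lem-parG1G2}(iv), i.e.\ $\partial_A G_2<0$ on $\{A>0,\ B(A)>0\}$, which holds because $|\theta|<1$ forces $S_a+\theta Q_0>0$ and $S_b+\theta Q_0>0$). For existence, however, you take a genuinely different route: the paper evaluates $G_2$ at the extreme admissible values $A\to 0^+$ (where $B=B_M$) and $A\to A_M^-$ (where $B=0$), and shows via the auxiliary monotone functions $f_1,f_2$ that these boundary values are respectively positive and negative; you instead test the interior points $A=l$ (where $B=r$) and $A=A^*$ (where $B=A^*$, killing the log term). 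Your two sign computations — $G_2(A^*)=\tfrac{(\beta-\alpha)z_1(l-r)}{1-\beta+\alpha}$ and, via the mean value theorem, $G_2(l)=-(S_a-S_b)\tfrac{c}{c+\theta Q_0}$ with $c+\theta Q_0>0$ since $c\ge\min(S_a,S_b)\ge|Q_0|\ge|\theta Q_0|$ — are exactly the computations the paper performs later, in the proof of Theorem~\ref{AonQ}(b),(c), to locate the root. So your argument buys a sharper conclusion for free: the root lies strictly between $l$ and $A^*$ (hence between $l$ and $r$), which the paper obtains only as a separate theorem; it also sidesteps the mild degeneracies at the boundary of the admissible interval (e.g.\ $S_a\to|Q_0|$ as $A\to 0^+$). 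The paper's choice of boundary test points has the advantage of not requiring the $l=r$ case to be handled separately, but your treatment of that case ($A^*=l=r$ is itself the root) is immediate, so nothing is lost.
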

\begin{proof} For any $(Q_0,\theta)$, it follows from Lemma \ref{lem-parG1G2} that $\partial_AG_2(A,Q_0,\theta)<0$, and hence, $G_2(A,Q_0,\theta)$ is strictly decreasing in $A$. Let $A_M=l+\alpha r/{(1-\beta)}$ be the largest value for $A$ (when $B=0$) and let $B_M=(1-\beta)l/{\alpha}+r$ be the largest value for $B$ (when $A=0$).

Set   $x=\sqrt{Q_0^2+z_1^2B_M^2}>|Q_0|$ and $y=\sqrt{Q_0^2+z_1^2A_M^2}>|Q_0|$. Then,
\begin{align*}
G_2(0^+,Q_0,\theta)=&f_1(x):= \theta Q_0\ln\frac{ |Q_0|+\theta Q_0}{ x+\theta Q_0}+  \dfrac{\beta - \alpha}{\alpha}z_1l  -|Q_0| +x,\\
G_2(A_M^-,Q_0,\theta)=&f_2(y):= \theta Q_0\ln\frac{ y+\theta Q_0}{|Q_0|+\theta Q_0}-  \dfrac{\beta - \alpha}{\alpha}z_1(A_M-l) -y +|Q_0|. 
\end{align*}
 It is easy to check that $f_1'(t)>0>f_2'(t)$ for $t>0$, and hence,   
 \[f_1(x)>f_1(|Q_0|)=\dfrac{\beta - \alpha}{\alpha}z_1l>0\; \mbox{ and }\;
 f_2(y)<f_2(|Q_0|)=-\dfrac{\beta - \alpha}{\alpha}z_1(A_M-l)<0.\]
Thus, for any $(Q_0,\theta)$  there is a unique $A=A(Q_0,\theta)$ such that $G_2(A(Q_0,\theta), Q_0,\theta)=0$. 
\end{proof}

In the following,   we also  denote $B(A(Q_0,\theta))$ by  $B(Q_0,\theta)$.
\begin{thm}\label{AonQ}  The solution $A=A(Q_0,\theta)$ of $G_2(A,Q_0,\theta)=0$ satisfies  
\begin{itemize}
\item[(a)] $A(0,\theta) = (1-\alpha)l + \alpha r$  and $\lim_{Q_0 \to \pm \infty} A(Q_0,\theta) = l$,
\item[(b)] if $l>r$, then $ l>A(Q_0,\theta)> A^*>B(Q_0,\theta)>r$,
\item[(c)] if $l<r$, then  $l<A(Q_0,\theta)< A^*<B(Q_0,\theta)<r$,
\item[(d)] if   $\theta Q_0\ge 0$, then   $\partial_{Q_0}A(Q_0,\theta)$ has the same sign as that of $(l-r)Q_0$.
\end{itemize}
\end{thm}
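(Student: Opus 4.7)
The plan is to establish (a) by direct computation and asymptotic analysis, (b) and (c) together via strict monotonicity of $G_2$ in $A$ combined with judicious evaluation at the anchor values $A = A^*$ and $A = l$, and (d) by implicit differentiation.

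For (a), setting $Q_0 = 0$ in $G_2(A, 0, \theta) = 0$ makes the $\theta Q_0$ factor vanish and turns $S_a = z_1 A$, $S_b = z_1 B$, so the equation collapses to the linear relation $\tfrac{1-\alpha}{\alpha}(A - l) + (A - r) = 0$, solved by $A = (1-\alpha)l + \alpha r$. For the limit $Q_0 \to \pm\infty$, I would exploit boundedness of $A \in (0, A_M)$ to pass to any convergent subsequence, then use the identity $S_a - S_b = z_1^2(A^2 - B^2)/(S_a + S_b) = O(|Q_0|^{-1})$ together with $S_a + \theta Q_0 \sim |Q_0|(1 + \theta\,\sign(Q_0))$ to conclude both that $S_a - S_b \to 0$ and that $\theta Q_0 \ln[(S_a + \theta Q_0)/(S_b + \theta Q_0)] = O(|Q_0|^{-1}) \to 0$. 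The equation $G_2 = 0$ then forces $\tfrac{\beta - \alpha}{\alpha} z_1(A - l) \to 0$, i.e.\ $A \to l$.

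For (b) and (c), the strict monotonicity $\partial_A G_2 < 0$ of Lemma \ref{lem-parG1G2}(iv) reduces everything to pinning down the sign of $G_2$ at two test values. At $A = A^*$ one has $B = A^*$, so $S_a = S_b$, the logarithmic term vanishes, and $G_2(A^*, Q_0, \theta) = -\tfrac{\beta - \alpha}{\alpha} z_1(A^* - l)$, which carries the sign of $l - r$. At $A = l$ one has $B = r$; introducing $\psi(x) := \theta Q_0 \ln(x + \theta Q_0) - x$ on the admissible range $x \ge |Q_0|$ (where $x + \theta Q_0 > 0$ since $|\theta| < 1$), one recognizes $G_2(l, Q_0, \theta) = \psi(S_a) - \psi(S_b)$, and computing $\psi'(x) = -x/(x + \theta Q_0) < 0$ shows $\psi$ is strictly decreasing, so $G_2(l, Q_0, \theta)$ has the sign of $-(S_a - S_b) = -(l - r)$. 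When $l > r$ this gives $G_2(A^*, \cdot) > 0 > G_2(l, \cdot)$, so monotonicity places $A^* < A < l$, and $B = \tfrac{1-\beta}{\alpha}(l - A) + r$ then yields $r < B < A^*$; the case $l < r$ is symmetric.

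For (d), implicit differentiation of $G_2(A(Q_0, \theta), Q_0, \theta) \equiv 0$ gives $\partial_{Q_0} A = -\partial_{Q_0} G_2 / \partial_A G_2$; since $\partial_A G_2 < 0$, the sign of $\partial_{Q_0} A$ matches that of $\partial_{Q_0} G_2$. For $\theta Q_0 > 0$, Lemma \ref{lem-parG1G2}(v) delivers the claim. For the boundary case $\theta = 0$ with $Q_0 \neq 0$, the formula $\partial_{Q_0} G_2 = Q_0(S_a - S_b)/(S_a S_b)$, combined with $\sign(S_a - S_b) = \sign(A - B) = \sign(l - r)$ coming from (b)/(c), closes the argument. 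The main obstacle is the sign determination at $A = l$ in (b) and (c): a priori the logarithm in $G_2$ and the factor $\theta Q_0$ move independently and can take either sign, so the total sign is not transparent; the rewriting as $\psi(S_a) - \psi(S_b)$ of a single monotone function reduces the problem to a one-line monotonicity check. The rate control in the asymptotic analysis of part (a) is a secondary technical point.
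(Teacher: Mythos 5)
Your proposal is correct and follows essentially the same route as the paper's proof: part (a) by direct evaluation at $Q_0=0$ and an asymptotic estimate showing the logarithmic term vanishes, parts (b)--(c) by combining $\partial_A G_2<0$ with sign evaluations of $G_2$ at $A=A^*$ and $A=l$, and part (d) by implicit differentiation together with Lemma \ref{lem-parG1G2}. Your rewriting of $G_2(l,Q_0,\theta)$ as $\psi(S_a)-\psi(S_b)$ with $\psi$ strictly decreasing is just the mean-value-theorem computation the paper performs on the logarithm, and your explicit rate control replaces the paper's L'Hospital step, so the differences are only cosmetic.
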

\begin{proof}  (a). The value $A(0,\theta)$ can be deduced from
 \[G_2(A,0,\theta)=-\dfrac{\beta - \alpha}{\alpha}z_1(A-l)-  z_1(A-B)=0\;\mbox{ and }\; B = \dfrac{1-\beta}{\alpha}(l-A)+r.\]

For the claim about the limits, one has, from $G_2(A(Q_0,\theta),Q_0,\theta)=0$,  
\begin{align*}
 \lim_{Q_0\to \pm \infty} \theta Q_0\ln\dfrac{S_a + \theta Q_0}{S_b + \theta Q_0}=&\lim_{Q_0\to \pm \infty}\Big(\dfrac{\beta - \alpha}{\alpha}z_1(A-l)  +S_a -S_b\Big)
= \dfrac{\beta - \alpha}{\alpha}z_1 \lim_{Q_0\to \pm \infty} (A-l).
\end{align*}
On the other hand, apply L'Hospital rule to get 
$$
\begin{aligned}
\lim_{Q_0\to \pm \infty}Q_0 \ln\dfrac{S_a+\theta Q_0}{S_b+ \theta Q_0} =& -\lim_{Q_0\to \pm \infty}\dfrac{(\frac{Q_0}{S_a}+\theta)(S_b+\theta Q_0)-(\frac{Q_0}{S_b}+\theta)(S_a+\theta Q_0)}{(S_a+\theta Q_0)(S_b+\theta Q_0)} Q_0^2=0.
\end{aligned}
$$
Thus, $\lim_{Q_0 \to \pm \infty} A(Q_0,\theta) = l$.

(b).  Recall that, for $A=A^*=\frac{(1-\beta)l+\alpha r}{1-\beta+\alpha}$, $B=A^*$. Thus, 
\[G_2(A^*,Q_0,\theta)=\frac{\beta-\alpha}{\alpha}z_1(l-A^*)=\frac{\beta-\alpha}{1-\beta+\alpha}z_1(l-r).\] 
Note that,   for some $S_*$ between $S_a$ and $S_b$,
\[\ln\dfrac{S_a+\theta Q_0}{S_b+\theta Q_0}=\ln(S_a+\theta Q_0)-\ln(S_b+ \theta Q_0)=\frac{S_a-S_b}{S_*+\theta Q_0}.\]
Thus, for some $S_*$ between $S_a$ and $S_b$,
\[G_2(l,Q_0,\theta)=\theta Q_0\ln\dfrac{S_a+ \theta Q_0}{S_b+\theta Q_0}- (S_a-S_b)=-\frac{(S_a-S_b)S_*}{S_*+\theta Q_0}.\]

If $l>r$, then $G_2(A^*,Q_0,\theta)>0$, which yields $A^*<A(Q_0,\theta)$ since $G_2$ is decreasing in $A$. The latter implies $B(Q_0,\theta)<A^*<A(Q_0,\theta)$, and hence, $S_a>S_b$, which then implies $G_2(l,Q_0,\theta)<0$. 
Due to again that $G_2$ is deceasing in $A$,  $r<A^*<A(Q_0,\theta)<l$ if $l>r$  (independent of $Q_0$).

(c). Similarly, if $l<r$, then $G_2(A^*,Q_0,\theta)<0<G_2(l,Q_0,\theta)$, and hence, $A^*>A(Q_0,\theta)>l$.

(d).   It follows from (\ref{parG1G2}) that, if $\theta Q_0>0$ or $\theta=0$, then $\partial_{Q_0} G_2$ has the same sign as that of $(S_a-S_b)Q_0$. The latter has the same sign as that of $(l-r)Q_0$. The statement then follows from
$\partial_A G_2 <0$ and   $\partial_{Q_0}A=-\partial_{Q_0} G_2/{\partial_A G_2}$.
 \end{proof}


\begin{rem}\label{monoG2}  Note that, with zero current condition $I=0$, we have that $A(Q_0,\theta)$ always lies between $l$ and $r$ for any $Q_0$. This is not true without zero current condition (see \cite{ZEL}).

We believe that, if $l\neq r$, then   $A(Q_0,\theta)$, or equivalently, $G_2(A(Q_0,\theta),Q_0,\theta)$ has a unique critical point in $Q_0$. It is true if $D_1=D_2$ (so $\theta=0$) but we could not establish it in general. However, numerical simulations support our belief that $A(Q_0,\theta)$ has a unique critical point in $Q_0$. 
 \qed
\end{rem}


%

\begin{thm}\label{Aonp} The quantity $\partial_{\theta} A(Q_0,\theta)$ has the same sign as that of $(l-r)Q_0$.
\end{thm}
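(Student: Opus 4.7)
The plan is to prove this directly by implicit differentiation of the defining relation $G_2(A(Q_0,\theta),Q_0,\theta)=0$ with respect to $\theta$. This yields
\[
\partial_A G_2 \cdot \partial_\theta A + \partial_\theta G_2 = 0,
\qquad\text{so}\qquad
\partial_\theta A(Q_0,\theta) \;=\; -\,\frac{\partial_\theta G_2(A,Q_0,\theta)}{\partial_A G_2(A,Q_0,\theta)}.
\]
This is legitimate because Theorem \ref{thmA(Q)} guarantees the solution $A(Q_0,\theta)$ is unique, and the non-vanishing of $\partial_A G_2$ (established below) makes the implicit function theorem applicable.

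Next I would invoke the sign information already recorded in Lemma \ref{lem-parG1G2}. Item (iv) of that lemma gives $\partial_A G_2(A,Q_0,\theta)<0$ unconditionally, so the factor $-1/\partial_A G_2$ in the above formula is strictly positive. Therefore $\partial_\theta A$ inherits the sign of $\partial_\theta G_2$. Item (vi) of the same lemma states that $\partial_\theta G_2(A,Q_0,\theta)$ has the same sign as $(l-r)Q_0$. Combining these two facts, $\partial_\theta A(Q_0,\theta)$ has the same sign as $(l-r)Q_0$, which is precisely the claim.

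There is essentially no obstacle here, since the two sign statements from Lemma \ref{lem-parG1G2} do all the work. It is worth recalling however that the proof of item (vi) of that lemma reduces $\partial_\theta G_2 = Q_0\bigl(g(S_a)-g(S_b)\bigr)$ via the monotonicity $g'(X)>0$ to the sign of $Q_0(S_a-S_b)$, hence to the sign of $Q_0(A-B)$; and that the final step, identifying the sign of $A-B$ with the sign of $l-r$, rests on Theorem \ref{AonQ}(b)--(c). So the proof below implicitly relies on the already-established qualitative ordering $l<A<B<r$ or $l>A>B>r$. Given this, only a few lines of implicit-function bookkeeping remain to be written.
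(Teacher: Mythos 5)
Your proof is correct and is essentially identical to the paper's: both differentiate $G_2(A(Q_0,\theta),Q_0,\theta)=0$ implicitly to get $\partial_\theta A=-\partial_\theta G_2/\partial_A G_2$ and then apply items (iv) and (vi) of Lemma \ref{lem-parG1G2}. Your remark that item (vi) ultimately rests on the ordering from Theorem \ref{AonQ}(b)--(c) matches the dependency structure in the paper as well.
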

\begin{proof}
 It follows from $G_2 =0$ in (\ref{G}) that,
$\displaystyle{\partial_{\theta} A = - {\partial_{\theta} G_2}/{\partial_A G_2}}$.
  The statement then follows from (iv) and (vi) in Lemma \ref{lem-parG1G2}. 
  \end{proof}

\subsection{ Zero-current Flux $J$.} For the case of zero current with $z_1=-z_2$, one has $J_1=J_2$. Denote the equal fluxes by $J$ that we call it {\em zero current flux}.  
Once a solution $(A,V)$ of $G_1=z_1V$ and $G_2=0$   is obtained, it follows from (\ref{J}) that   $ J$ is given by
 \begin{align}\label{Js}
 \begin{split}
J(Q_0,D_1, D_2) =& -\dfrac{2D_1 D_2(A- l)}{(D_1 + D_2)\alpha H(1)} = -\dfrac{2D_1 D_2(r- B)}{(D_1 + D_2) (1-\beta)H(1)}.
  \end{split}
\end{align}
  
  Note that the function $A=A(Q_0,\theta)$ depends on $D_1$ and $D_2$ through $\theta=(D_2-D_1)/{(D_2+D_1)}$ so $A$ is homogeneous of degree zero in $(D_1,D_2)$ but $J(Q_0,D_1, D_2)$ is not. 

The following   result is a direct consequence of Theorems \ref{AonQ} and \ref{Aonp}.
\begin{cor}\label{JonQp} The { zero-current} flux $J=J(Q_0,D_1,D_2)$ satisfies
\begin{itemize}
\item[(a)] if   $\theta Q_0\ge 0$, then   $\partial_{Q_0}J$ and $(l-r)Q_0$ have opposite signs,
\item[(b)] if $Q_0>0$, then   $\partial_{D_1}J$ and $l-r$ have the same sign,
\item[(c)] if $Q_0<0$, then   $\partial_{D_2}J$ and $l-r$ have the same sign.
 \end{itemize}
\end{cor}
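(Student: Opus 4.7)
The proof plan is to differentiate the explicit formula
\[
J(Q_0,D_1,D_2) = -\frac{2D_1D_2}{(D_1+D_2)\,\alpha H(1)}\bigl(A(Q_0,\theta)-l\bigr)
\]
and then assemble the signs from Theorems \ref{AonQ} and \ref{Aonp}.

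For part (a), with $D_1,D_2$ (hence $\theta$) fixed, only $A=A(Q_0,\theta)$ depends on $Q_0$, so
\[
\partial_{Q_0}J = -\frac{2D_1D_2}{(D_1+D_2)\,\alpha H(1)}\,\partial_{Q_0}A(Q_0,\theta).
\]
The prefactor is a negative constant, and Theorem \ref{AonQ}(d) says $\partial_{Q_0}A$ has the same sign as $(l-r)Q_0$ whenever $\theta Q_0\ge 0$. Multiplication by the negative prefactor flips the sign, giving the stated opposite-sign conclusion.

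For parts (b) and (c), the mixing is less transparent because $D_j$ enters $J$ both through the harmonic-mean prefactor $\mu:=2D_1D_2/(D_1+D_2)$ and through $\theta=(D_2-D_1)/(D_2+D_1)$, and a priori those two contributions could fight. So I would compute, using $\partial_{D_1}\mu=2D_2^2/(D_1+D_2)^2$ and $\partial_{D_1}\theta=-2D_2/(D_1+D_2)^2$,
\[
\partial_{D_1}J = -\frac{2D_2}{\alpha H(1)(D_1+D_2)^2}\Bigl[D_2\bigl(A-l\bigr) - \mu\,\partial_\theta A\Bigr],
\]
and the symmetric expression
\[
\partial_{D_2}J = -\frac{2D_1}{\alpha H(1)(D_1+D_2)^2}\Bigl[D_1\bigl(A-l\bigr) + \mu\,\partial_\theta A\Bigr].
\]
Now I invoke the two sign facts at our disposal: by Theorem \ref{AonQ}(b)(c), $A-l$ has the opposite sign to $l-r$ (independently of $Q_0$); by Theorem \ref{Aonp}, $\partial_\theta A$ has the same sign as $(l-r)Q_0$. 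The pleasant point — and really the content of the corollary — is that when $Q_0>0$ both summands inside the bracket for $\partial_{D_1}J$ carry the sign $-\operatorname{sign}(l-r)$ and therefore reinforce, and analogously when $Q_0<0$ both summands in the bracket for $\partial_{D_2}J$ carry the same sign. Combined with the negative outer factor, this yields $\operatorname{sign}(\partial_{D_1}J)=\operatorname{sign}(l-r)$ and $\operatorname{sign}(\partial_{D_2}J)=\operatorname{sign}(l-r)$ respectively.

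The main (minor) obstacle is sign bookkeeping: one has to check both sub-cases $l>r$ and $l<r$ to confirm that the two terms in each bracket indeed point the same way rather than cancel. No new ideas beyond Theorems \ref{AonQ} and \ref{Aonp} are needed; the computation reduces the corollary to a short sign check.
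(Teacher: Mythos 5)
Your proposal is correct and follows essentially the same route as the paper: differentiate the explicit formula (\ref{Js}) for $J$ and read off the signs from Theorem \ref{AonQ} (for $\partial_{Q_0}A$ and the position of $A$ relative to $l$) and Theorem \ref{Aonp} (for $\partial_{\theta}A$). Your expressions for $\partial_{D_1}J$ and $\partial_{D_2}J$ agree with the paper's (which are written in terms of $\theta$ via $1\pm\theta=2D_{2,1}/(D_1+D_2)$), and your sign bookkeeping in the four sub-cases checks out.
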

\begin{proof} Direct calculations from (\ref{Js}) give 
\begin{align*}
\partial_{Q_0}J=& -\dfrac{2D_1 D_2 }{(D_1 + D_2)\alpha H(1)}\partial_{Q_0}A,\quad
\partial_{D_1}J=\dfrac{(1+\theta)^2}{2\alpha H(1)}\Big( l-A(Q_0,\theta)+(1-\theta) \partial_{\theta}A   \Big),\\
\partial_{D_2}J=&\dfrac{(1-\theta)^2}{2\alpha H(1)}\Big(l-A(Q_0,\theta)-(1+\theta) \partial_{\theta}A  \Big).
\end{align*}
The statement follows from the above formulas and Theorems \ref{AonQ} and \ref{Aonp}.
\end{proof}

	A non-intuitive outcome of the equations in \eqref{Js}, that can also be seen in Figure \ref{Fig-ZeroJvsQ1}, is that \medskip
	
\hspace*{1.4in}{\em The zero-current $J$ has the same sign as that of $l-r$.}
\medskip
	
	Moreover, note that if $D_1 = D_2$, then the zero-current flux $J$ is an even function in $Q_0$, and it is monotonic for $Q_0 > 0$. But, if $D_1 \neq D_2$, then the zero-current flux $J$ is not an even function in $Q_0$ and monotonicity of the zero-current flux $J$ in $Q_0$ is complicated. See the sections 2.1.1 and 2.1.2 in \cite{MEL20} for more results on the zero-current flux.

\begin{figure}[h]\label{Fig-ZeroJvsQ1}
	\centerline{\epsfxsize=3.0in \epsfbox{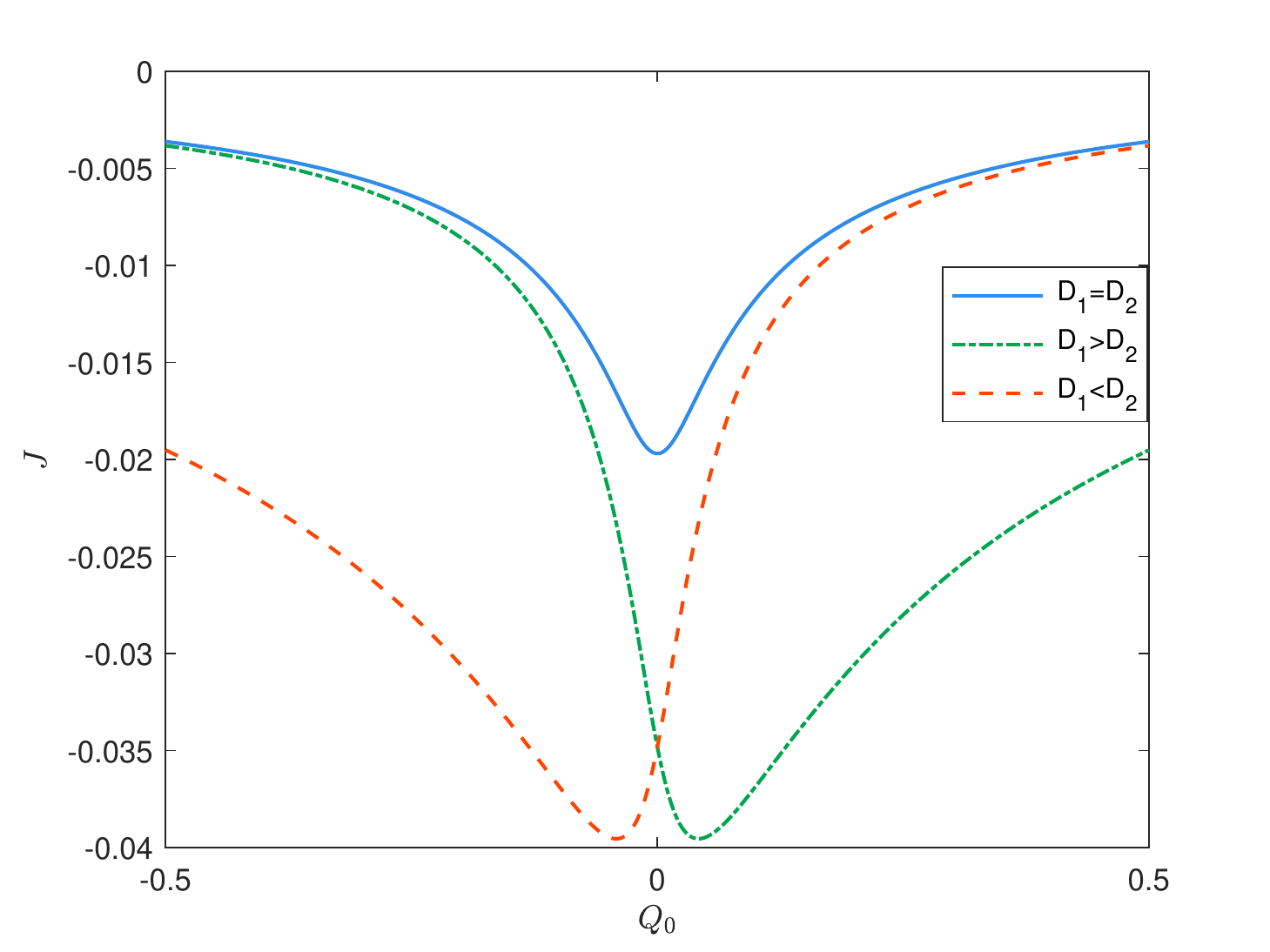} }
	\caption{\em  The zero-current flux  $J=J(Q_0)$ for various values of  $(D_1,D_2)$ when $l <r$,  which is negative; it has the same sign as that of  $l-r$, no matter what the values of diffusion constants are. When $D_1= D_2$, the zero-current flux is symmetric respect to $Q_0$ and  loses its symmetry when $D_1\neq D_2$.}
	\label{Fig-ZeroJvsQ1}
\end{figure}

\begin{rem}\label{signJ}  
We do not know the signs of $\partial_{Q_0}J$, $\partial_{D_1}J$, and $\partial_{D_2}J$ under conditions other than those in the statement of Corollary \ref{JonQp} in general.  
 \qed
\end{rem}



 \section{Reversal Potential $V_{rev}=V_{rev}(Q_0,\theta)$.}\label{revP}
\setcounter{equation}{0}
We are searching for  the value $V=V_{rev}$ of the transmembrane potential 
$V = \phi(0) - \phi(1)$ that produces zero current $I$.
 For the case we considered, we will show the existence and uniqueness of reversal potentials. 
\begin{thm}\label{thm-revpot} Consider ionic flow of two  ($n=2$) ion species  with $z_1=-z_2$. For any  given $(Q_0,\theta)$,
  there exists a unique reversal potential $V_{rev}=V_{rev}(Q_0,\theta)$.
\end{thm}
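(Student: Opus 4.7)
The plan is to read the reversal potential straight off the reduced system established in Proposition~\ref{rSys}. Recall that the zero-current matching problem has been reduced to the pair
\begin{equation*}
G_1(A,Q_0,\theta)=z_1V,\qquad G_2(A,Q_0,\theta)=0,
\end{equation*}
where the unknowns are $A$ and $V$ (with $(Q_0,\theta)$ prescribed). The crucial structural feature is that the second equation does not involve $V$, so the system decouples: one first solves $G_2=0$ for $A$, and then reads off $V$ from the first equation.

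First, I would invoke Theorem~\ref{thmA(Q)}, which guarantees that for every $(Q_0,\theta)$ there is a unique $A=A(Q_0,\theta)$ with $G_2(A(Q_0,\theta),Q_0,\theta)=0$. This step carries all the analytic content (monotonicity of $G_2$ in $A$ via Lemma~\ref{lem-parG1G2}(iv), together with the boundary behavior of $G_2$ at $A=0^+$ and $A=A_M^-$), and has already been done. Second, since $z_1\neq 0$, define
\begin{equation*}
V_{rev}(Q_0,\theta):=\frac{1}{z_1}\,G_1\!\left(A(Q_0,\theta),Q_0,\theta\right).
\end{equation*}
By construction $(A(Q_0,\theta),V_{rev}(Q_0,\theta))$ satisfies both equations of \eqref{G1G2Sys}, so a reversal potential exists.

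For uniqueness, suppose $\widetilde V$ is another reversal potential for the same $(Q_0,\theta)$. By Proposition~\ref{rSys} there must be some $\widetilde A$ with $G_1(\widetilde A,Q_0,\theta)=z_1\widetilde V$ and $G_2(\widetilde A,Q_0,\theta)=0$. The uniqueness part of Theorem~\ref{thmA(Q)} forces $\widetilde A=A(Q_0,\theta)$, and then the first equation forces $\widetilde V=V_{rev}(Q_0,\theta)$.

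There is, strictly speaking, no serious obstacle left here: all of the work has been invested in the reduction (Proposition~\ref{rSys}) and in the monotonicity-plus-boundary-values argument for $G_2$ (Theorem~\ref{thmA(Q)}). The only point that deserves a sentence of care is to emphasize that the reduction is an equivalence, so the single algebraic pair $(A,V)=(A(Q_0,\theta),V_{rev}(Q_0,\theta))$ does produce an admissible singular orbit of the BVP with $I=0$; this is precisely the content of Proposition~\ref{rSys} combined with the construction in Section~\ref{sec-GSP}. Thus the entire proof is a two-line deduction from the preceding results, and I would present it in exactly that way.
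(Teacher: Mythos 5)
Your proposal is correct and follows the same route as the paper: solve $G_2(A,Q_0,\theta)=0$ uniquely for $A$ via Theorem~\ref{thmA(Q)}, then read off $V_{rev}=\frac{1}{z_1}G_1(A(Q_0,\theta),Q_0,\theta)$ from the decoupled first equation. The paper's proof is exactly this two-step deduction (it also records the resulting explicit formula~\eqref{VonQf}), so there is nothing to add.
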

\begin{proof}
It follows from Theorem \ref{thmA(Q)} that for any given $(Q_0,\theta)$, there exists a unique $A=A(Q_0,\theta)$ such that $G_2(A(Q_0,\theta), Q_0,\theta)=0$. A reversal potential 
$V=V_{rev}$ is then  determined from $z_1V_{rev}=G_1(A(Q_0,\theta), Q_0,\theta)$ and is given by 
\begin{align}\label{VonQf}
V_{rev} (Q_0,\theta)= \dfrac{\theta}{z_1}   \big(\ln\dfrac{S_a + \theta Q_0}{S_b + \theta Q_0} + \ln\dfrac{l}{r}\big)  - \dfrac{1+\theta}{z_1} \ln \dfrac{A(Q_0,\theta)}{B(Q_0,\theta)} +\frac{1}{z_1} \ln \dfrac{S_a -Q_0}{S_b -Q_0},
\end{align}
where $S_a$ and $S_b$ are given in terms of $Q_0$ and $A (Q_0,\theta)$ as in (\ref{abp}).
 \end{proof}
 Based on formula (\ref{VonQf}) and definitions of $S_a$ and $S_b$ in (\ref{abp}), the following statement can be obtained directly. 
 \begin{cor}\label{VQ} For  $Q_0=0$, the reversal potential is  $V_{rev} (0,\theta)= \dfrac{\theta}{z_1}  \ln\dfrac{l}{r}$.
\end{cor}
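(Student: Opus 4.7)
The plan is straightforward: substitute $Q_0=0$ directly into the explicit formula \eqref{VonQf} for $V_{rev}$, and then track the cancellations among the three logarithmic terms. No auxiliary machinery beyond the definitions in \eqref{abp} and the standing convention $z_1=-z_2>0$ is needed.

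First I would observe that when $Q_0=0$, the radicals in \eqref{abp} simplify to $S_a=\sqrt{z_1^2A^2}=z_1A$ and $S_b=z_1B$, using $z_1>0$ and the positivity of the geometric means $A,B$. Simultaneously, the terms $\theta Q_0$ inside the first and third logarithms in \eqref{VonQf} vanish, so the first logarithm reduces to $\ln(z_1A/(z_1B))=\ln(A/B)$ and the third reduces to $\ln(A/B)$ as well.

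Next I would gather the $\ln(A/B)$ contributions. The first term of \eqref{VonQf} contributes $\tfrac{\theta}{z_1}\ln(A/B)+\tfrac{\theta}{z_1}\ln(l/r)$, the middle term contributes $-\tfrac{1+\theta}{z_1}\ln(A/B)$, and the third term contributes $\tfrac{1}{z_1}\ln(A/B)$. The total coefficient in front of $\ln(A/B)$ is
\[
\frac{\theta}{z_1}-\frac{1+\theta}{z_1}+\frac{1}{z_1}=0,
\]
so all $A$- and $B$-dependence cancels identically and the remaining contribution is exactly $\tfrac{\theta}{z_1}\ln(l/r)$, which is the claimed value.

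I do not anticipate any real obstacle here, since this is a verification by substitution. The one thing worth emphasizing is that the specific value $A(0,\theta)=(1-\alpha)l+\alpha r$ from Theorem \ref{AonQ}(a) is not required for the computation: the cancellation in the coefficient of $\ln(A/B)$ is structural and happens for every admissible $(A,B)$. This structural cancellation is also a useful consistency check on the form of the reduced system \eqref{G1G2Sys}, and it foreshadows the comparison with the GHK equation in Section 4.3, where the $\theta$-weighted log-ratio of boundary concentrations is expected to appear as the leading order behavior in the absence of permanent charge.
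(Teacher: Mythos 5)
Your proposal is correct and follows exactly the route the paper intends: direct substitution of $Q_0=0$ into \eqref{VonQf}, using $S_a=z_1A$, $S_b=z_1B$ from \eqref{abp}, and the structural cancellation $\tfrac{\theta}{z_1}-\tfrac{1+\theta}{z_1}+\tfrac{1}{z_1}=0$ of the $\ln(A/B)$ terms. The paper states only that the corollary "can be obtained directly" from \eqref{VonQf} and \eqref{abp}; your write-up supplies precisely the omitted computation, and your observation that the specific value of $A(0,\theta)$ is not needed is accurate.
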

In this case of zero permanent charge, for equal diffusion coefficients $D_1=D_2$ (so that $\theta=0$), the reversal potential is zero independent of the concentrations $l$ and $r$ at both ends. But, for unequal diffusion coefficients, the reversal potential is generally nonzero, which indicates that an electric field is needed to balance the diffusion created by unequal ionic mobilities.

\subsection{Dependence of the Reversal Potential $V_{rev}$  on $Q_0$.}
We will consider how the reversal potential $V_{rev}= V_{rev}(Q_0,\theta)$ depends on $Q_0$.  Recall that we denote $J_1=J_2$ by $J$.  
 \begin{thm}\label{VonQ} For the   reversal potential  $V_{rev}=V_{rev}(Q_0,\theta)$, one has 
 
 (i) if $l>r$, then $J>0$, and hence, $- \dfrac{1}{z_1}\ln \dfrac{l}{r}<  V_{rev}(Q_0,\theta)< \dfrac{1}{z_1}\ln \dfrac{l}{r}$;

 (ii) if $l<r$, then $J<0$, and hence, $\dfrac{1}{z_1}\ln \dfrac{l}{r}<  V_{rev}(Q_0,\theta)< -\dfrac{1}{z_1}\ln \dfrac{l}{r}$;
 
(iii) $ V_{rev}(0,\theta)=\dfrac{\theta}{z_1}\ln \dfrac{l}{r}$ and
$ \lim_{Q_0 \to \pm\infty}  V_{rev}(Q_0,\theta)= \pm \dfrac{1}{z_1}\ln \dfrac{l}{r}$.
\end{thm}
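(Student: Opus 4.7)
My plan is to handle the three parts using the explicit formula \eqref{VonQf} together with the bounds on $A(Q_0,\theta)$ from Theorem \ref{AonQ}; for the two-sided bounds on $V_{rev}$ in (i) and (ii), I avoid the formula altogether and argue instead by integrating the Nernst--Planck relation directly.

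The sign of $J$ in (i) and (ii) is immediate from \eqref{Js}: since $J = -\frac{2D_1D_2(A-l)}{(D_1+D_2)\alpha H(1)}$, the sign of $J$ is opposite to that of $A-l$, and parts (b)--(c) of Theorem \ref{AonQ} give $r<A<l$ when $l>r$ (so $J>0$) and $l<A<r$ when $l<r$ (so $J<0$). For the two-sided bound on $V_{rev}$ I would argue as follows. At zero current with $z_1=-z_2$ we have $J_1=J_2=J$, so in every slow region $d\mu_k/dx = -J/(h(x)D_kc_k(x))$. Because Lemmas \ref{fast1}--\ref{fast2} show that the fast layer dynamics preserve the electrochemical potentials $\mu_k = z_k\phi + \ln c_k$, these potentials are continuous across every internal and boundary layer in the singular limit. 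Integrating over $[0,1]$ therefore gives
\[
\mu_k(1) - \mu_k(0) \;=\; -J\int_0^1 \frac{dx}{h(x)D_kc_k(x)},
\]
where the integral is strictly positive. The left-hand sides evaluate to $-z_1V + \ln(r/l)$ for $k=1$ and $z_1V + \ln(r/l)$ for $k=2$. In case (i), $J>0$ forces both sides to be negative, which is exactly the pair of inequalities $V > -\tfrac{1}{z_1}\ln(l/r)$ and $V < \tfrac{1}{z_1}\ln(l/r)$; case (ii) is identical with all signs flipped.

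For part (iii), the value at $Q_0=0$ is a direct substitution into \eqref{VonQf}: when $Q_0=0$ one has $S_a=z_1A$ and $S_b=z_1B$, so both ratios $\frac{S_a+\theta Q_0}{S_b+\theta Q_0}$ and $\frac{S_a-Q_0}{S_b-Q_0}$ collapse to $A/B$, and the total coefficient of $\ln(A/B)$ becomes $\theta - (1+\theta) + 1 = 0$, leaving $V_{rev}(0,\theta) = \tfrac{\theta}{z_1}\ln(l/r)$. For the limits, Theorem \ref{AonQ}(a) gives $A\to l$ and hence $B\to r$, so $\ln(A/B)\to\ln(l/r)$. What remains is the asymptotic analysis of the two $S$-dependent log-ratios. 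As $Q_0\to +\infty$, I would use $S_a-Q_0 = z_1^2 A^2/(S_a+Q_0) \sim z_1^2 l^2/(2Q_0)$ and the analogous expression for $S_b$ to obtain $\ln\frac{S_a-Q_0}{S_b-Q_0}\to 2\ln(l/r)$, while $S_a+\theta Q_0 \sim (1+\theta)Q_0$ forces $\ln\frac{S_a+\theta Q_0}{S_b+\theta Q_0}\to 0$; the net coefficient of $\ln(l/r)$ in $z_1V_{rev}$ is $\theta - (1+\theta) + 2 = 1$. As $Q_0\to -\infty$, both $S_a\pm Q_0$ grow linearly in $|Q_0|$, so both log-ratio terms vanish, and the coefficient becomes $\theta - (1+\theta) + 0 = -1$, giving $V_{rev}\to -\tfrac{1}{z_1}\ln(l/r)$.

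The main obstacle is the asymptotic bookkeeping in the $Q_0\to+\infty$ case: the surviving log-term involves $S-Q_0$, which vanishes individually but whose ratio has a finite nontrivial limit. Because Theorem \ref{AonQ}(b)--(c) confines $A$ and $B$ to a bounded interval and Theorem \ref{AonQ}(a) guarantees $A\to l$, $B\to r$, the leading-order expansion of $S-Q_0 = z_1^2A^2/(S+Q_0)$ is enough to evaluate the ratio; the coefficient matching $\theta-(1+\theta)+2=1$ is then a routine check.
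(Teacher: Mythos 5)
Your proposal is correct and follows essentially the same route as the paper: the sign of $J$ from \eqref{Js} together with Theorem \ref{AonQ}(b)--(c), the bounds on $V_{rev}$ from the fact that $J_k$ has the sign of $z_kV+\ln(l/r)$, and the limits in (iii) by passing $A\to l$, $B\to r$ through formula \eqref{VonQf}. The only difference is that you supply the derivation of the sign fact (by integrating $d\mu_k/dx=-J/(hD_kc_k)$ along the singular orbit, using conservation of $\mu_k$ across layers), which the paper simply invokes as known.
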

\begin{proof}  (i) It follows from  part (b) in Theorem \ref{AonQ} and the formula for $J$ in (\ref{Js}) that, if $l>r$, then $J>0$.
The range for $V_{rev}$ is a consequence of that fact that $J_k$ has the same sign as that of $z_kV+\ln l/r$.   
(ii) can be established similarly.

(iii) The value of $V_{rev}(0,\theta)$ is recast from  Corollary \ref{VQ} that follows from (\ref{VonQf}) directly. To show the limits, we recall from Theorem \ref{thmA(Q)} that
  $\lim_{Q_0\to \pm \infty}A(Q_0,\theta)=l$ (and hence, $\lim_{Q_0\to \pm \infty}B(Q_0,\theta)=r$). Note also that $-1<\theta<1$. Therefore, 
\[\lim_{Q_0\to  +\infty}\ln \dfrac{A(Q_0,\theta)}{B(Q_0,\theta)}=  \ln \dfrac{l}{r}, \quad \lim_{Q_0\to \pm\infty}\ln\dfrac{S_a + \theta Q_0}{S_b + \theta Q_0}=0, \]
and
\[\lim_{Q_0\to +\infty} \ln \dfrac{S_a -Q_0}{S_b -Q_0} = 2 \ln \dfrac{l}{r}, \quad \lim_{Q_0\to -\infty} \ln\dfrac{S_a -Q_0}{S_b -Q_0} = 0.\]
 Using (\ref{VonQf}), one then has
\begin{align*}
\lim_{Q_0\to +\infty} z_1  V_{rev}(Q_0,\theta) =& \theta \ln \dfrac{l}{r} - (1+\theta) \ln \dfrac{l}{r} + 2 \ln \dfrac{l}{r} = \ln \dfrac{l}{r},\\
\lim_{Q_0\to -\infty} z_1  V_{rev}(Q_0,\theta) =& \theta \ln \dfrac{l}{r} - (1+\theta) \ln \dfrac{l}{r}  =- \ln \dfrac{l}{r}.
\end{align*}
 The proof is completed. 
\end{proof}

  
The next result is  a direct consequence  of Theorem \ref{VonQ}, whose proof will be omitted. 
 \begin{cor}\label{V_rev-Sgn} One has, 
  \begin{itemize}
  \item[(i)] if $D_1<D_2$, then, for some $Q_0<0$,  $V_{rev}(Q_0,\theta)=0$;
  
 \item[(ii)] if $D_1>D_2$, then, for some $Q_0>0$,  $V_{rev}(Q_0,\theta)=0$.
  \end{itemize}
\end{cor}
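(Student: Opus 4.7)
The plan is to deduce both statements by the intermediate value theorem applied to $Q_0 \mapsto V_{rev}(Q_0,\theta)$ on a half line, comparing the value at $Q_0 = 0$ from Theorem \ref{VonQ}(iii) with the appropriate limit at infinity. First I would note that continuity of $V_{rev}$ in $Q_0$ is automatic: by Theorem \ref{thmA(Q)} the quantity $A(Q_0,\theta)$ is continuous in $Q_0$ (equivalently, $\partial_A G_2 < 0$ lets us apply the implicit function theorem to $G_2 = 0$), and formula \eqref{VonQf} is a continuous function of $A$, $B$, and $Q_0$. So the only real task is a sign comparison.

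For part (i), assume $D_1 < D_2$, equivalently $\theta \in (0,1)$. If $l = r$ the conclusion is vacuous (every $Q_0$ produces $V_{rev} = 0$), so assume $l \neq r$. By Theorem \ref{VonQ}(iii),
\begin{equation*}
V_{rev}(0,\theta) = \tfrac{\theta}{z_1}\ln\tfrac{l}{r}, \qquad \lim_{Q_0 \to -\infty} V_{rev}(Q_0,\theta) = -\tfrac{1}{z_1}\ln\tfrac{l}{r}.
\end{equation*}
Since $z_1 > 0$ and $\theta > 0$, the first quantity has the same sign as $l - r$, while the second has the opposite sign. Continuity of $V_{rev}(\cdot,\theta)$ on $(-\infty,0]$ then forces a zero at some $Q_0 < 0$, which is the claim.

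For part (ii), assume $D_1 > D_2$, so $\theta \in (-1,0)$. The analogous comparison uses
\begin{equation*}
V_{rev}(0,\theta) = \tfrac{\theta}{z_1}\ln\tfrac{l}{r}, \qquad \lim_{Q_0 \to +\infty} V_{rev}(Q_0,\theta) = \tfrac{1}{z_1}\ln\tfrac{l}{r}.
\end{equation*}
With $\theta < 0$ and $z_1 > 0$ these have opposite signs (again assuming $l \neq r$), so the intermediate value theorem on $[0,+\infty)$ produces the desired $Q_0 > 0$. There is no real obstacle here, the statement is essentially a packaged corollary of the sign asymmetry between $V_{rev}(0,\theta)$ and $\lim_{Q_0 \to \pm\infty} V_{rev}(Q_0,\theta)$ recorded in Theorem \ref{VonQ}; the only care needed is to match the sign of $\theta$ to the side of $0$ on which the sign flip is guaranteed to occur, which is precisely what distinguishes (i) from (ii).
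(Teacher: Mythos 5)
Your proof is correct and is exactly the argument the paper intends: the paper omits the proof, calling the corollary a direct consequence of Theorem \ref{VonQ}, and the natural reading (confirmed by Remark \ref{limRV}) is precisely your intermediate value theorem comparison between $V_{rev}(0,\theta)=\frac{\theta}{z_1}\ln\frac{l}{r}$ and the limits $\pm\frac{1}{z_1}\ln\frac{l}{r}$ as $Q_0\to\pm\infty$. Your added care about continuity (via $\partial_A G_2<0$ and the implicit function theorem) and the trivial case $l=r$ is fine, though in that case the conclusion is trivially true rather than vacuous.
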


We now provide remarks on the  physical basis for results in  Theorem \ref{VonQ} and Corollary \ref{V_rev-Sgn}.
   
 \begin{rem}\label{limRV}   The statements (i) and (ii) in  Theorem \ref{VonQ}  can be obtained in a direct way as follows. Note that, in general,  $J_k$ has the same sign as that of $z_kV+\ln l/r$. Thus, if $l>r$, then for $V\le V_1=- \dfrac{1}{z_1}\ln \dfrac{l}{r}$, one has $J_1\le 0$ since $z_1V+\ln l/r\le z_1V_1+\ln l/r= 0$, and $J_2>0$ since  $z_2V+\ln l/r=-z_1V+\ln l/r\ge -z_1V_1+\ln l/r=2\ln l/r>0$. Therefore, if $l>r$, then $V_{rev}>- \dfrac{1}{z_1}\ln \dfrac{l}{r}$ and,  similarly, $V_{rev}(Q_0,\theta)< \dfrac{1}{z_1}\ln \dfrac{l}{r}$.   
 
  In \cite{ZEL}, it shows that, as $Q_0\to +\infty$, $J_1(Q_0)\to 0$. Thus, $J_2(Q_0)\to 0$ from $I(Q_0)=0$. 
Since $J_2$ is proportional to $-z_1V_{rev}(Q_0,\theta)+\ln l/r$ with a positive proportional constant in general (see \cite{ZEL}), it follows that $-z_1V_{rev}(Q_0,\theta)+\ln l/r\to 0$ as $Q_0\to +\infty$, which is exactly what claimed in Theorem \ref{VonQ} for this limit. The other claim   follows from the same argument.
 
{ Statement (iii)} in Theorem \ref{VonQ} says that, if $D_1<D_2$ and $l>r$,  then  $V_{rev}(0,\theta)>0$. This makes sense since, for $V=0$ and $l>r$,  $J_2>0$ and $J_1>0$, and, with $D_1<D_2$, $J_1<J_2$. To help $J_1$ more than $J_2$ to get $J_1=J_2$, one needs to increase $V$ and this is why, in this case, $V_{rev}(0,\theta)>0$. The latter often implies that, if $V=0$, then $I(V=0)<0$, or equivalently, $J_2>J_1$. Thus, intuitively, in order for the  zero potential to be a reversal potential, a permanent charge helping $J_1$ more than $J_2$ is needed; that is,  the permanent charge should be negative, which agrees with statement (i) in Corollary \ref{V_rev-Sgn}.  Other statements in Corollary \ref{V_rev-Sgn} can be explained  similarly.  \qed
\end{rem}

 Concerning the monotonicity of $V_{rev}=V_{rev}(Q_0,\theta)$ in $Q_0$,  we have
 \begin{thm}\label{monoVinQ} For any given $\theta \in (-1,1)$, one has
 \medskip
 
%
%

 if $\theta Q_0 \geq 0$, then $V_{rev}(Q_0,\theta )$ is   increasing in $Q_0$ for $l>r$ and decreasing in $Q_0$ for $l<r$.

 \end{thm}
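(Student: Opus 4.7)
The plan is to read $V_{rev}$ off the coupled pair $z_1 V_{rev} = G_1(A(Q_0,\theta),Q_0,\theta)$ and $G_2(A(Q_0,\theta),Q_0,\theta) = 0$ and differentiate implicitly. Since $\partial_A G_2 < 0$ from Lemma \ref{lem-parG1G2}(iv), the implicit function theorem gives $\partial_{Q_0} A = -\partial_{Q_0} G_2 / \partial_A G_2$ for all $(Q_0,\theta)$. Substituting this into the $Q_0$-derivative of $G_1$ along $A = A(Q_0,\theta)$ produces the Jacobian-style expression
\[
z_1\,\partial_{Q_0} V_{rev} \;=\; \frac{\partial_{Q_0} G_1 \cdot \partial_A G_2 \;-\; \partial_A G_1 \cdot \partial_{Q_0} G_2}{\partial_A G_2}.
\]
From here the argument is purely sign-tracking, and this is precisely where the hypothesis $\theta Q_0 \ge 0$ enters: it is exactly the condition under which Lemma \ref{lem-parG1G2}(v) fixes the sign of $\partial_{Q_0} G_2$.

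Next I would collect the signs from Lemma \ref{lem-parG1G2}: $\partial_A G_2 < 0$; $\partial_A G_1$ has the sign of $Q_0$; $\partial_{Q_0} G_1$ has the sign of $l-r$; and, under $\theta Q_0 \ge 0$, $\partial_{Q_0} G_2$ has the sign of $(l-r)Q_0$. I would then split into the two cases. For $l > r$ and $\theta Q_0 \ge 0$, one has $\partial_{Q_0} G_1 \cdot \partial_A G_2 < 0$ while $\partial_A G_1 \cdot \partial_{Q_0} G_2$ has the sign of $Q_0 \cdot Q_0 \ge 0$; so the numerator is strictly negative and, divided by $\partial_A G_2 < 0$, the ratio is strictly positive. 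Since $z_1 > 0$ under the standing assumption $z_1 = -z_2 > 0$, this gives $\partial_{Q_0} V_{rev} > 0$. The case $l < r$ is dual: both $\partial_{Q_0} G_1$ and the sign-factor of $\partial_{Q_0} G_2$ flip, the numerator becomes strictly positive, and $\partial_{Q_0} V_{rev} < 0$.

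The main obstacle I anticipate is the degenerate locus $\theta Q_0 = 0$, where Lemma \ref{lem-parG1G2}(v) is stated with a strict inequality. At $Q_0 = 0$ the formulas in (\ref{parG1G2}) give $\partial_A G_1 = \partial_{Q_0} G_2 = 0$, so the expression collapses to $z_1 \partial_{Q_0} V_{rev}\big|_{Q_0=0} = \partial_{Q_0} G_1$; by Theorem \ref{AonQ}(b)--(c) one has $A \ne B$ whenever $l \ne r$, hence $\partial_{Q_0} G_1$ is strictly of the sign of $l-r$ at $Q_0=0$. The residual case $\theta = 0$ with $Q_0 \ne 0$ is handled by direct inspection of the formula $\partial_{Q_0} G_2 = Q_0(S_a - S_b)/(S_a S_b)$ in (\ref{parG1G2}), whose sign is that of $(l-r)Q_0$ via the sign of $A-B$, so the same Jacobian computation goes through. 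No estimate beyond Lemma \ref{lem-parG1G2} and Theorem \ref{AonQ} should be required.
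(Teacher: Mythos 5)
Your proposal is correct and follows essentially the same route as the paper: derive $z_1\,\partial_{Q_0}V_{rev}=\bigl(\partial_{Q_0}G_1\,\partial_AG_2-\partial_AG_1\,\partial_{Q_0}G_2\bigr)/\partial_AG_2$ via implicit differentiation of $G_2=0$, then read the signs off Lemma \ref{lem-parG1G2} (your treatment of the boundary cases $Q_0=0$ and $\theta=0$ is in fact more explicit than the paper's one-line citation of the lemma). One small inaccuracy: at $Q_0=0$ one has $\partial_{Q_0}G_2=\theta\ln(S_a/S_b)=\theta\ln(A/B)$, which is generally nonzero when $\theta\neq0$ and $l\neq r$; the collapse of the Jacobian expression there is due solely to $\partial_AG_1=0$, so your conclusion stands unchanged.
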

 \begin{proof} It follows from $z_1V_{rev}=G_1(A(Q_0,\theta),Q_0,\theta)$   and $\partial_{Q_0}A=-{\partial_{Q_0}G_2}/{\partial_{A}G_2}$ that
   \begin{align}\label{dVQ}
 \partial_{Q_0}V_{rev}  =& \dfrac{1}{z_1 \partial_{A}G_2} \Big( \partial_{Q_0}G_1 \partial_{A}G_2 -\partial_AG_1 \partial_{Q_0}G_2 \Big).
\end{align}
The statements then follow from Lemma \ref{lem-parG1G2}.
 \end{proof}

We conjecture that $V_{rev}(Q_0,\theta)$ is always monotonic in $Q_0$ but could not prove it.   Numerical simulations in Figure \ref{fig-VrevQ1} support  our conjecture.

\begin{figure}[ht]\label{fig-VrevQ1}
\centerline{\epsfxsize=3.0in \epsfbox{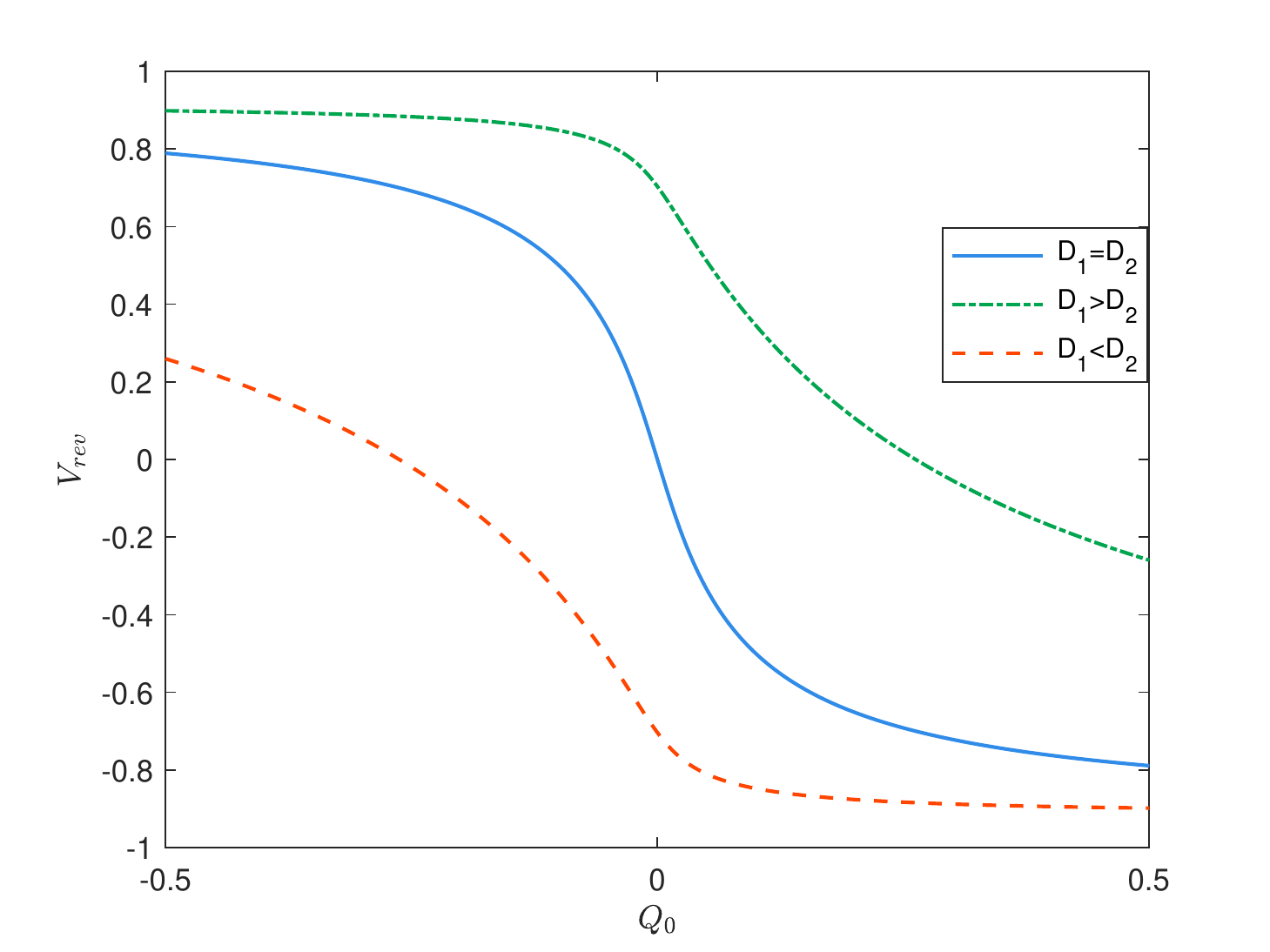} }
\caption{\em The function $V=V_{rev}(Q_0,D_1,D_2)$ for various values of $(D_1,D_2)$: it is an odd function in $Q_0$ if $D_1=D_2$ and the symmetry breaks if $D_1\neq D_2$.}
\label{fig-VrevQ1}
\end{figure}

For $|Q_0|$ small, we have  
\begin{thm}\label{RevInQ}  Near $Q_0=0$, the reversal potential $V_{rev}(Q_0,\theta)$ is approximated by  
\begin{align}\label{partQ-Vrev}
V_{rev}(Q_0,\theta) = \dfrac{\theta}{z_1}   \ln \dfrac{l}{r} + \dfrac{1-\theta^2}{z_1^2}\dfrac{(\beta - \alpha)(l-r)}{\big((1-\alpha) l + \alpha r \big)\big((1-\beta) l + \beta r \big)}Q_0 + O(Q_0^2).
\end{align}
\end{thm}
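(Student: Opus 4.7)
The plan is a Taylor expansion of $V_{rev}(Q_0,\theta)$ in $Q_0$ around $Q_0=0$, using the implicit differentiation formula already established in the proof of Theorem \ref{monoVinQ}. The constant term is immediate from Corollary \ref{VQ}: $V_{rev}(0,\theta)=(\theta/z_1)\ln(l/r)$. The work is to show that the coefficient of $Q_0$ equals $\dfrac{(1-\theta^2)(\beta-\alpha)(l-r)}{z_1^2\bigl((1-\alpha)l+\alpha r\bigr)\bigl((1-\beta)l+\beta r\bigr)}$. The remainder is then $O(Q_0^2)$ by smoothness of $G_1,G_2$ in $Q_0$ (the square roots $S_a,S_b$ are smooth in $Q_0$ near $0$ since $A,B>0$), together with smoothness of the implicitly defined $A(Q_0,\theta)$ which follows from $\partial_AG_2\ne 0$ by Lemma \ref{lem-parG1G2}(iv).

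To compute $\partial_{Q_0}V_{rev}\big|_{Q_0=0}$, I would use formula \eqref{dVQ}:
\[
\partial_{Q_0}V_{rev}=\frac{1}{z_1\partial_AG_2}\bigl(\partial_{Q_0}G_1\,\partial_AG_2-\partial_AG_1\,\partial_{Q_0}G_2\bigr).
\]
The crucial simplification is that, from \eqref{parG1G2}, $\partial_AG_1$ carries an overall factor $Q_0$, so $\partial_AG_1\big|_{Q_0=0}=0$. Thus the formula collapses to $\partial_{Q_0}V_{rev}\big|_{Q_0=0}=\frac{1}{z_1}\partial_{Q_0}G_1\big|_{Q_0=0}$, and the messy factor $\partial_AG_2$ drops out entirely.

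At $Q_0=0$, with $z_1>0$, one has $S_a=z_1A$ and $S_b=z_1B$, so the formula for $\partial_{Q_0}G_1$ in \eqref{parG1G2} gives
\[
\partial_{Q_0}G_1\big|_{Q_0=0}=\frac{(1-\theta^2)(z_1A-z_1B)}{(z_1A)(z_1B)}=\frac{(1-\theta^2)(A-B)}{z_1AB}.
\]
From Theorem \ref{AonQ}(a), $A(0,\theta)=(1-\alpha)l+\alpha r$, and since $B=\frac{1-\beta}{\alpha}(l-A)+r$, a one-line computation yields $B(0,\theta)=(1-\beta)l+\beta r$ and $A-B=(\beta-\alpha)(l-r)$. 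Substituting gives exactly the claimed coefficient.

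The only mild subtlety is justifying the $O(Q_0^2)$ remainder, which I would handle by noting that $A(Q_0,\theta)$ is $C^2$ near $Q_0=0$ (implicit function theorem applied to $G_2=0$, since $\partial_AG_2\ne 0$ and $G_2$ is smooth in $(A,Q_0)$ on a neighborhood where $A,B>0$), and $G_1$ is $C^2$ in $(A,Q_0)$ on the same neighborhood; this makes $V_{rev}=G_1(A(Q_0,\theta),Q_0,\theta)/z_1$ a $C^2$ function of $Q_0$ near $0$, so Taylor's theorem gives the remainder bound. I do not anticipate any genuine obstacle here; the main step is simply recognizing the vanishing of $\partial_AG_1$ at $Q_0=0$, which makes the linearization explicit and quite clean.
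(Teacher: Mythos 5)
Your proof is correct and follows essentially the same route as the paper: both rest on the observation that $\partial_AG_1(A,0,\theta)=0$, which reduces $\partial_{Q_0}V_{rev}(0,\theta)$ to $\tfrac{1}{z_1}\partial_{Q_0}G_1$ evaluated at $Q_0=0$ with $S_a=z_1A$, $S_b=z_1B$, and then substitutes $A(0,\theta)=(1-\alpha)l+\alpha r$, $B(0,\theta)=(1-\beta)l+\beta r$. Your explicit justification of the $O(Q_0^2)$ remainder via the implicit function theorem is a small addition the paper leaves implicit, but it is not a different argument.
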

\begin{proof}  
It follows from (\ref{parG1G2}) that 
\[ \partial_{A}G_1(A,0,\theta)=0\;\mbox{ and }\; \partial_{Q_0}G_1(A(0,\theta),0,\theta)=\dfrac{1-\theta^2}{z_1}\dfrac{A(0,\theta)- B(0,\theta)}{A(0,\theta) B(0,\theta)}.\]
Recall that $A(0,\theta)=(1-\alpha) l + \alpha r$ and $B(0,\theta) = (1-\beta) l + \beta r$. One then has
\begin{align*} 
\partial_{Q_0}V_{rev}(0,\theta) 
=&\frac{1}{z_1}\partial_{A}G_1(A(0,\theta),0,\theta)\partial_{Q_0}A(0,\theta)+\frac{1}{z_1}\partial_{Q_0}G_1(A(0,\theta),0,\theta)\\
=& \dfrac{1-\theta^2}{z_1^2} \dfrac{(\beta - \alpha)(l-r)}{\big((1-\alpha) l + \alpha r \big)\big((1-\beta) l + \beta r \big)}.
\end{align*}
The expansion (\ref{partQ-Vrev}) then follows from that   $V_{rev}(0,\theta) =\dfrac{\theta}{z_1}  \ln \dfrac{l}{r}$.  
\end{proof}

 It follows from (\ref{partQ-Vrev}) that, for small $Q_0$, the reversal potential $V_{rev}(Q_0,0)$ is of order $O(1)$ in general (if $l\neq r$) but,  if $D_1=D_2$ (so that $\theta=0$), then the reversal potential $V_{rev}(Q_0,0) =O(Q_0)$. This is consistent with the result in Corollary \ref{VQ} and the statement followed the corollary.


\subsection{Dependence of the Reversal Potential $V_{rev}$  on $\theta$.}
 Recall that $\theta =(D_2-D_1)/{(D_2+D_1)}$ is a measurement of the difference between $D_1$ and $D_2$
 \begin{prop}\label{Vonp}  One has
$\partial_{\theta}V_{rev}(Q_0,\theta)$ has the same sign as that of  $l-r$.
\end{prop}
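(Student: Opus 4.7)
The plan is to read $V_{rev}$ as an implicit function of $\theta$ through the system $G_1(A,Q_0,\theta) = z_1 V_{rev}$, $G_2(A,Q_0,\theta) = 0$ guaranteed by Theorem \ref{thmA(Q)} and Theorem \ref{thm-revpot}, and then exploit the sign table in Lemma \ref{lem-parG1G2}. Since all moving parts have been computed already, the proof is essentially a careful sign bookkeeping.

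First I would differentiate $z_1 V_{rev}(Q_0,\theta) = G_1(A(Q_0,\theta), Q_0, \theta)$ with respect to $\theta$ while keeping $Q_0$ fixed, obtaining
\begin{equation*}
z_1\,\partial_\theta V_{rev} \;=\; \partial_A G_1 \cdot \partial_\theta A \;+\; \partial_\theta G_1.
\end{equation*}
Implicit differentiation of $G_2(A(Q_0,\theta), Q_0, \theta) = 0$ gives $\partial_\theta A = -\partial_\theta G_2 / \partial_A G_2$, which is well-defined since $\partial_A G_2 < 0$ by Lemma \ref{lem-parG1G2}(iv). Substituting yields
\begin{equation*}
z_1\,\partial_A G_2 \cdot \partial_\theta V_{rev} \;=\; \partial_\theta G_1 \cdot \partial_A G_2 \;-\; \partial_A G_1 \cdot \partial_\theta G_2,
\end{equation*}
analogous in structure to formula (4.4) for $\partial_{Q_0} V_{rev}$ in the proof of Theorem \ref{monoVinQ}.

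Next I would read off signs from Lemma \ref{lem-parG1G2}: $\partial_A G_1$ has the sign of $Q_0$, $\partial_\theta G_1$ has the sign of $l-r$, $\partial_A G_2 < 0$, and (most importantly) $\partial_\theta G_2$ has the sign of $(l-r)Q_0$. Hence $\partial_A G_1 \cdot \partial_\theta G_2$ has the sign of $Q_0 \cdot (l-r)Q_0 = (l-r)Q_0^2$, i.e.\ the sign of $l-r$, while $\partial_\theta G_1 \cdot \partial_A G_2$ has the sign of $(l-r)\cdot(-1) = -(l-r)$. Therefore the right-hand side $\partial_\theta G_1 \cdot \partial_A G_2 - \partial_A G_1 \cdot \partial_\theta G_2$ is a difference of a quantity with sign $-(l-r)$ and a quantity with sign $l-r$, so it has the sign of $-(l-r)$. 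Dividing by $z_1 \partial_A G_2 < 0$ (using $z_1 > 0$ from the standing assumption $z_1=-z_2>0$) flips the sign, giving $\partial_\theta V_{rev}$ the sign of $l-r$, as claimed.

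The only minor obstacle is the degenerate case $Q_0 = 0$, where $\partial_A G_1$ and $\partial_\theta G_2$ both vanish so the sign argument above collapses to $0 = 0$. In that case I would appeal directly to Corollary \ref{VQ}, which gives $V_{rev}(0,\theta) = (\theta/z_1)\ln(l/r)$, so $\partial_\theta V_{rev}(0,\theta) = (1/z_1)\ln(l/r)$ has the sign of $l-r$; alternatively, continuity in $Q_0$ of $\partial_\theta V_{rev}$ together with the strict sign on $Q_0 \neq 0$ settles this endpoint. The case $l = r$ is trivial because $V_{rev} \equiv 0$ independently of $\theta$ (the boundary data are symmetric and $A \equiv B \equiv l$ satisfies $G_2 = 0$, giving $G_1 = 0$).
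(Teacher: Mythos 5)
Your argument is correct, and it is structured differently from the paper's. The paper differentiates the closed form (\ref{VonQf}) directly and writes $z_1\partial_\theta V_{rev}=g(S_a)-g(S_b)+\ln\bigl(lB/(rA)\bigr)$, which is precisely $\partial_\theta G_1$ evaluated at $A=A(Q_0,\theta)$, and then concludes from the monotonicity of $g$ together with the ordering $r<B<A<l$ (when $l>r$) of Theorem \ref{AonQ}; strictly speaking that display drops the chain-rule contribution $\partial_A G_1\cdot\partial_\theta A$ coming from the $\theta$-dependence of $A$, which is nonzero whenever $Q_0\neq 0$ and $l\neq r$. The omission is harmless because, by Lemma \ref{lem-parG1G2}(i) and Theorem \ref{Aonp}, that extra term has the sign of $Q_0\cdot(l-r)Q_0$, i.e.\ again the sign of $l-r$ --- which is exactly the bookkeeping your proof carries out explicitly. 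So your route, modeled on the determinant identity (\ref{dVQ}) from Theorem \ref{monoVinQ} and the sign table of Lemma \ref{lem-parG1G2}, is the more complete version of the same underlying computation: the decisive fact in both is the sign of $\partial_\theta G_1$, which rests on $g$ being increasing and on $A$ and $B$ lying between $l$ and $r$, precisely the ingredients the paper invokes. Your separate handling of $Q_0=0$ via Corollary \ref{VQ} and of $l=r$ is fine (and at $Q_0=0$ the chain-rule term vanishes, so there the paper's formula is exact).
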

\begin{proof} 
Direct calculations from (\ref{VonQf}) give
$$
\begin{aligned}
\partial_{\theta}V_{rev}(Q_0,\theta) 
= & \dfrac{1}{z_1} \Big(g(S_a)-g(S_b) +  \ln \dfrac{lB(Q_0,\theta)}{rA(Q_0,\theta)}   \Big),
\end{aligned}
$$
where $g(X)$ is defined in (\ref{gg}) and is increasing in $X$ for $X>0$. 
In particular, if   $l>r$ then $g(S_a)-g(S_b)>0$. Moreover, it follows from Theorem \ref{AonQ}   that  $r < B(Q_0,\theta) < A(Q_0,\theta) < l$. The proof is thus complete for $l>r$. The case for $l<r$ is similar. 
\end{proof}

\begin{rem} \label{remVonp}
Proposition \ref{Vonp} shows how diffusion coefficients affect reversal potential and reveals a fascinating attribute that may not be completely intuitive at first glance. Indeed, 
recall  the observation in \cite{ELX15}  that, for $k=1,2$,
\begin{align}\label{JonP}
\frac{J_k}{D_k} \int_0^1\frac{1}{h(x)c_k(x)}dx=z_kV+\ln \frac{l}{r}.
\end{align}
The relation, of course, holds for the zero current condition: $J_1=J_2$ with $V=V_{rev}$. Now, if we fix the diffusion constant $D_1$ but increase $D_2$ (so $\theta$ is increasing), then { $|J_2|$} increases since  all but $\dfrac{J_2}{D_2}$ in (\ref{JonP}) are independent of $D_2$ (\cite{Liu09}), and hence, to satisfy zero current condition,    we should   increase  { $|J_1|$}. Intuitively   increasing $V_{rev}$ seems to accomplish the latter.  But this intuition agrees with Proposition \ref{Vonp} only for $l>r$ and is the exactly opposite for $l<r$. That is, for $l<r$,    Proposition \ref{Vonp} says, as $\theta$ increases, $V_{rev}(Q_0,\theta)$ decreases. This counterintuitive behavior could be explained by the fact that  $c_1(x)$ actually depends on $V_{rev}$ and reducing $V_{rev}$ could   increase { $|J_1|$}.
Unfortunately, we could not explain the behavior in physical terms and will conduct  further investigation needed to better understand the behavior. \qed
\end{rem}


\subsection{A Comparison to Goldman-Hodgkin-Katz Equation for  $V_{rev}$.}\label{Ef_of_Q}
We will first recall Goldman-Hodgkin-Katz (GHK) equation for the reversal potential $V_{rev}$  and then make a comparison with our result.
 
Based on essentially the assumption that the electric potential $\phi(x)$ is linear in $x$ (or the electric field is constant), 
Goldman (\cite{Goldman}), and    Hodgkin and Katz (\cite{HK49}) derived an equation (the GHK equation) for the reversal potential, which extends that of Nernst equation for a single ion species.  Under the assumption, the I-V (current-voltage) relation is given by
\[I = V \sum_{k=1}^n z_k^2 D_k \dfrac{r_k - l_k e^{z_kV}}{1- e^{z_kV}}.\]
For the   case where $n=2$ and $z_1 = -z_2$,   the GHK  equation for the reversal potential    is  
\begin{align}\label{N-GHK}
V_{rev}^{GHK}(\theta) = \dfrac{1}{z_1} \ln \dfrac{(1-\theta)r + (1+\theta) l}{(1-\theta)l + (1+\theta) r}.
\end{align}

 The assumption that the electric potential $\phi(x)$ is linear in $x$ is thought to probably make sense without channel structure, in particular, when $Q_0=0$. This is not correct either. In fact, when $Q_0=0$, from Corollary \ref{VQ}, the reversal potential is
\[V_{rev}(0,\theta) = \dfrac{\theta}{z_1}  \ln \dfrac{l}{r},\] 
which is different from that in (\ref{N-GHK}). In our opinion, what is more important is that our result on the reversal potential is the first for  general $Q_0\neq 0$ with different diffusion coefficient. Thus, for $n=2$ with $z_1=-z_2$, the GHK equation for reversal potential should be replaced by 
\[V_{rev}(Q_0,\theta)={\dfrac{1}{z_1} }G_1(A(Q_0,\theta),Q_0,\theta)\]
with $A(Q_0,\theta)$ being the solution of  $G_2(A,Q_0,\theta)=0$. Figure \ref{Fig-PermChargeHx} shows  comparisons between  the reversal potential $V_{rev}$ from our result with $V_{rev}^{GHK}$ as functions of $D_2/{D_1}$  with $Q_0=0$ and $Q_0=10$.

\begin{figure}[h]
	\centerline{\epsfxsize=3.0in
\epsfbox{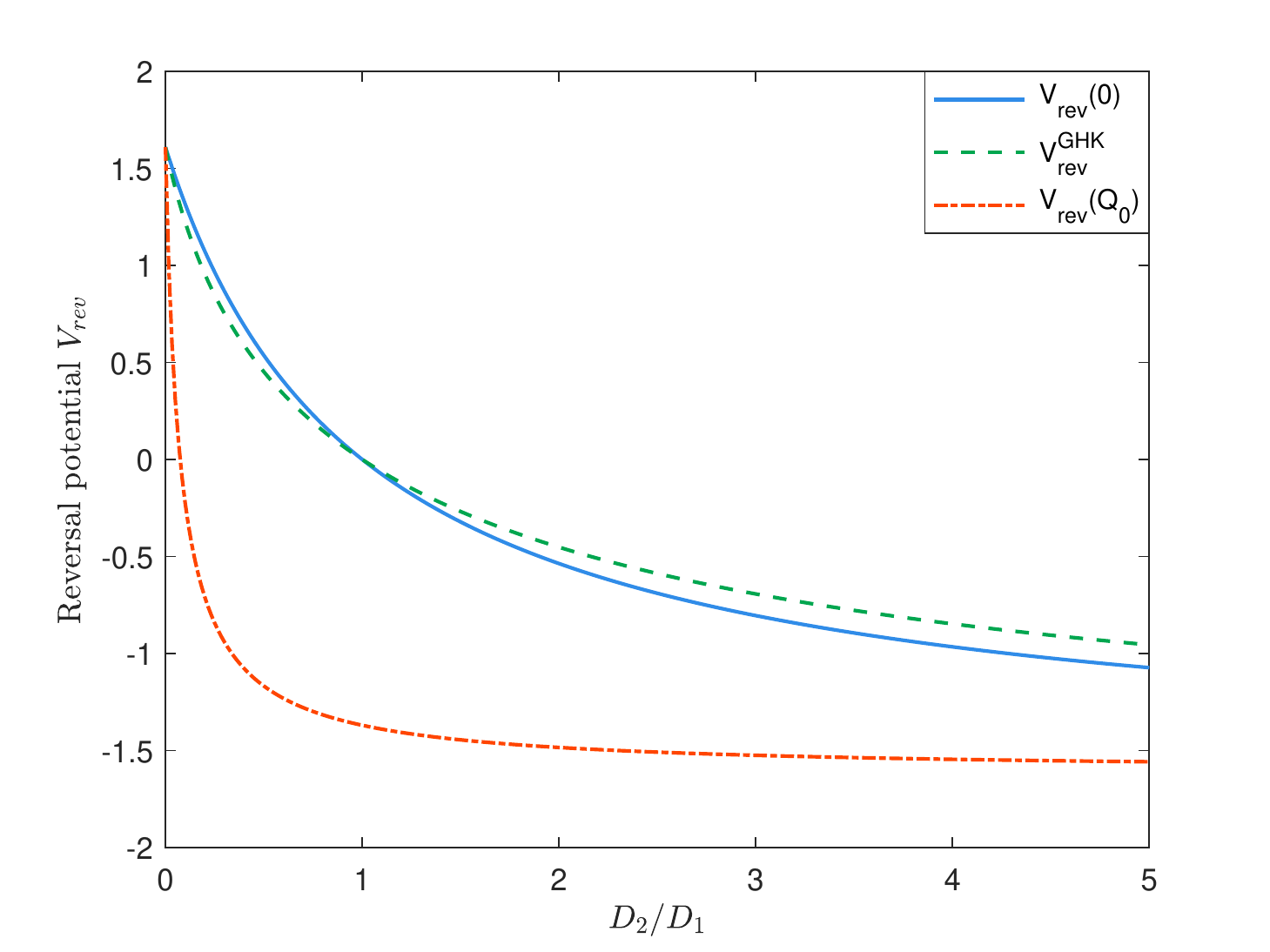}
\epsfxsize=3.0in 
\epsfbox{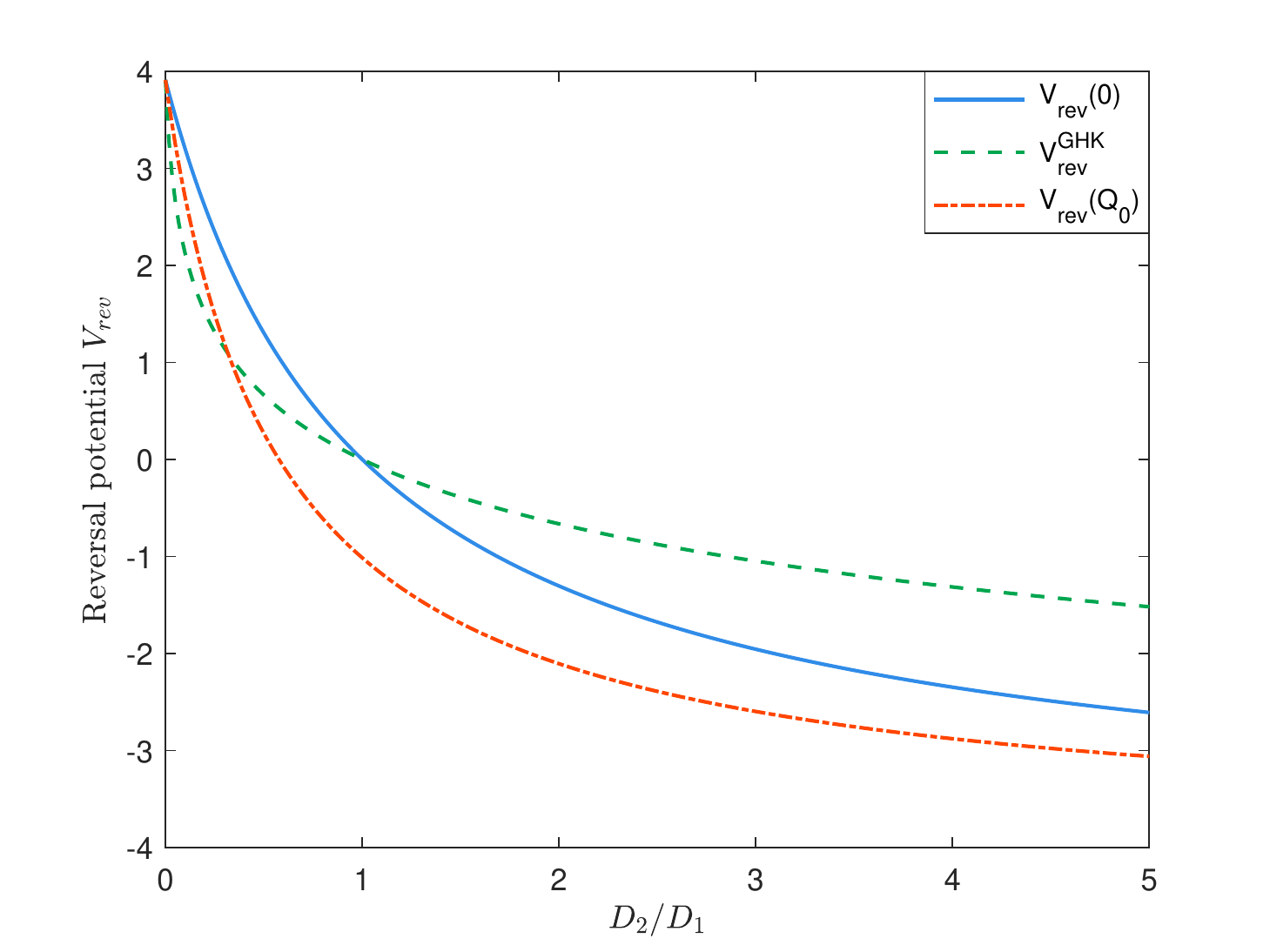} }
	\caption{\em Comparison of $V_{rev}(0)$ with $Q_0=0$, $V_{rev}^{GHK}$ and $V_{rev}(Q_0)$ with $Q_0=10$. Left panel: $l=0.2$ and  $r=1$. Right panel: $l=0.2$ and $r=10$.  }
	\label{Fig-PermChargeHx}
\end{figure}


It is very important to generalize this result to mixtures with more than two ion species.


\section{Reversal Permanent Charge $Q_{rev}(V,\theta)$.}\label{revPC}
\setcounter{equation}{0}
In view of the duality of reversal potential $V$ and the reversal permanent charge $Q^*$, we now present a general result for reversal permanent charge with a given electric potential $V$. We comment that there are differences between these two problems. On one hand, as probably expected,   reversal potentials should always exist. On the other hand, there is a simple necessary condition for the existence of the reversal permanent charge $Q_{rev}$ as discussed above.  This is indeed established below for the special case of permanent charges $Q$ in (A2).

\begin{thm}\label{thm-Qrev} For $n=2$ with $z_1=-z_2$, 
there exists a   reversal permanent charge $Q_{rev}$ if and only if 
\begin{equation}\label{Qrev-NecCond}
\big(z_1V+ \ln\dfrac{l}{r} \big) \big( z_2V + \ln\dfrac{l}{r} \big) >0.
\end{equation}
\end{thm}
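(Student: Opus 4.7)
The plan is to translate the question into a range problem for the reversal potential $V_{rev}(\cdot,\theta)$ and then apply the bounds and limits already recorded in Theorem \ref{VonQ} together with an intermediate value argument. First I would observe that, by Theorem \ref{thm-revpot}, the zero current condition at $(V,Q_0)$ is equivalent to $V = V_{rev}(Q_0,\theta)$, so a reversal permanent charge $Q_{rev}$ for a given $V$ exists precisely when $V$ lies in the image of the map $Q_0 \mapsto V_{rev}(Q_0,\theta)$. Next I would rewrite \eqref{Qrev-NecCond} in a more convenient form: since $z_2=-z_1$, one has $(z_1V+\ln(l/r))(z_2V+\ln(l/r)) = (\ln(l/r))^2 - (z_1V)^2$, so \eqref{Qrev-NecCond} is equivalent to $|z_1V| < |\ln(l/r)|$, i.e.\ to $V$ lying strictly between $-\tfrac{1}{z_1}\ln(l/r)$ and $\tfrac{1}{z_1}\ln(l/r)$.

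For the necessary direction I would invoke Theorem \ref{VonQ}(i)(ii): regardless of the sign of $l-r$, the values $V_{rev}(Q_0,\theta)$ lie strictly in the open interval with endpoints $\pm\tfrac{1}{z_1}\ln(l/r)$ for every $Q_0$. Consequently, if some $Q_{rev}$ exists then $V = V_{rev}(Q_{rev},\theta)$ automatically satisfies the reformulated inequality, which is \eqref{Qrev-NecCond}.

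For sufficiency the idea is continuity of $V_{rev}$ in $Q_0$ combined with the limits in Theorem \ref{VonQ}(iii). Theorem \ref{thmA(Q)} provides a unique solution $A=A(Q_0,\theta)$ of $G_2=0$, and since $\partial_A G_2<0$ by Lemma \ref{lem-parG1G2}(iv), the implicit function theorem yields smoothness of $A(\cdot,\theta)$ in $Q_0$, and hence continuity of $V_{rev}(Q_0,\theta) = G_1(A(Q_0,\theta),Q_0,\theta)/z_1$. Combining this with $\lim_{Q_0\to\pm\infty} V_{rev}(Q_0,\theta) = \pm\tfrac{1}{z_1}\ln(l/r)$ from Theorem \ref{VonQ}(iii), the intermediate value theorem shows that the image of $V_{rev}(\cdot,\theta)$ covers the entire open interval with these two limits as endpoints. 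Any $V$ satisfying \eqref{Qrev-NecCond} lies in that interval and hence admits a preimage $Q_{rev}$, which completes the argument.

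The main work has really already been done in Theorems \ref{thmA(Q)} and \ref{VonQ}; the only point requiring any care is the smoothness (equivalently, continuity) of $V_{rev}$ as a function of $Q_0$, which is immediate from the nonvanishing of $\partial_A G_2$ together with the implicit function theorem. No new estimates are needed beyond what is already in hand.
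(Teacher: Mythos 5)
Your proposal is correct and follows essentially the same route as the paper: necessity comes from the fact that $V_{rev}(Q_0,\theta)$ is always confined to the open interval with endpoints $\pm\frac{1}{z_1}\ln\frac{l}{r}$ (equivalently, from the signs of $J_1,J_2$), and sufficiency comes from the limits $V_{rev}(Q_0,\theta)\to\pm\frac{1}{z_1}\ln\frac{l}{r}$ as $Q_0\to\pm\infty$ together with the intermediate value theorem; the paper phrases this via the sign change of $g_1(Q_0)=G_1(A(Q_0),Q_0,\theta)-z_1V$, which is the same argument. Your explicit appeal to the implicit function theorem (using $\partial_A G_2<0$) to justify continuity of $V_{rev}$ in $Q_0$ is a small point of added care that the paper leaves implicit.
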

\begin{proof} Since $J_k$, for $k=1,2$, has the same sign as that of $z_kV + \ln\dfrac{l}{r}$ and $z_1=-z_2$, the condition in (\ref{Qrev-NecCond}) is necessary for a zero current $I$, and hence, for the existence of a reversal permanent charge.
To show the condition is also sufficient, we set $ g_1(Q_0) := G_1(A(Q_0),Q_0,\theta)-z_1V$. From Theorem \ref{thmA(Q)} one obtains,
$$
\lim_{Q_0\to +\infty} \ln  \dfrac{S_a -Q_0}{S_b -Q_0}   = 2\lim_{Q_0\to  +\infty}\ln \dfrac{A}{B}= 2 \ln \dfrac{l}{r}, \quad \lim_{Q_0\to -\infty} \ln  \dfrac{S_a -Q_0}{S_b -Q_0}  = \lim_{Q_0\to \pm\infty}\ln\dfrac{S_a + \theta Q_0}{S_b + \theta Q_0}=0.
$$
Then from above, the equation for $G_1$ in \eqref{G}, Lemma \ref{thmA(Q)} and above one has,
\begin{equation}\label{ineqlimg1}
\begin{aligned}
 \lim_{Q_0\to +\infty}g_1(Q_0)= -z_1V+\ln\dfrac{l}{r} , \quad
\lim_{Q_0\to -\infty} g_1(Q_0) = -z_1V- \ln\dfrac{l}{r}. 
\end{aligned}
\end{equation}
The condition (\ref{Qrev-NecCond}) implies that the above values have opposite signs. By the Intermediate Value Theorem, there is at least one 
$Q_0=Q_{rev}(V,\theta)$ such that $g_1(Q_0)=0$. \end{proof}

This existence result can be viewed as a duality of Theorem \ref{thm-revpot} together with (i) and (ii) in Theorem \ref{VonQ}.
  The next result is a duality to (iii) of Theorem \ref{VonQ}, whose proof will be omitted.
\begin{thm}\label{lem-Qrev}  
For any   $(V, l, r)$ that satisfies the condition \eqref{Qrev-NecCond} one has,  
\[\lim\limits_{z_1V \to \theta \ln {l}/{r} } Q_{rev}(V,\theta)= 0\;\mbox{ and }\; \lim\limits_{z_1V \to \pm\ln {l}/{r} } Q_{rev}(V,\theta)= \pm\infty.\]
\end{thm}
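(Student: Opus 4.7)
The strategy is to view Theorem 5.2 as the inverse reading of part (iii) of Theorem \ref{VonQ}: setting $F(Q_0,\theta):=G_1(A(Q_0,\theta),Q_0,\theta)=z_1 V_{rev}(Q_0,\theta)$, the reversal permanent charge at $(V,\theta)$ is by definition any $Q_0$ with $F(Q_0,\theta)=z_1 V$. Theorem \ref{VonQ}(iii) already supplies $F(0,\theta)=\theta\ln(l/r)$ and $\lim_{Q_0\to\pm\infty}F(Q_0,\theta)=\pm\ln(l/r)$, so both limits claimed in Theorem \ref{lem-Qrev} should fall out by running those identities backwards, provided one controls continuity of $F$ in $Q_0$ and avoids spurious limit points. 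Continuity of $F$ is immediate from Theorem \ref{thmA(Q)} together with $\partial_A G_2<0$ (Lemma \ref{lem-parG1G2}(iv)), which makes $A(\cdot,\theta)$ continuous via the implicit function theorem, and $G_1$ is smooth.

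For $z_1 V\to\theta\ln(l/r)$, the cleanest route is to use the expansion from Theorem \ref{RevInQ},
\[
z_1 V_{rev}(Q_0,\theta)-\theta\ln(l/r)=\frac{1-\theta^2}{z_1}\,\frac{(\beta-\alpha)(l-r)}{\bigl((1-\alpha)l+\alpha r\bigr)\bigl((1-\beta)l+\beta r\bigr)}\,Q_0+O(Q_0^2).
\]
When $l\neq r$ the linear coefficient is nonzero, so the inverse function theorem promotes $Q_{rev}$ to a local $C^1$ function of $V$ in a neighborhood of $z_1 V=\theta\ln(l/r)$, visibly vanishing in the limit. The boundary case $l=r$ reduces the admissibility condition \eqref{Qrev-NecCond} to $V=0$ (and $\theta\ln(l/r)=0$), where one checks directly from $G_1=G_2=0$ that $Q_{rev}=0$ is forced.

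For $z_1 V\to\pm\ln(l/r)$, I would argue by contradiction. Take a sequence $V_n$ with $z_1 V_n\to\ln(l/r)$ (the other sign is symmetric) and suppose $\{Q_{rev}(V_n,\theta)\}$ has a bounded subsequence with finite accumulation point $Q^*$. Continuity of $F$ then gives $F(Q^*,\theta)=\ln(l/r)$, i.e.\ $Q^*$ would be a finite reversal permanent charge at $V^*$ with $z_1 V^*=\ln(l/r)$. But at this $V^*$ the factor $z_2 V^*+\ln(l/r)=0$, so condition \eqref{Qrev-NecCond} degenerates and by Theorem \ref{thm-Qrev} no finite $Q_{rev}$ exists—contradiction. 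Hence $|Q_{rev}(V_n,\theta)|\to\infty$, and the sign is pinned down by the limits of $F$: since $F(Q_0,\theta)\to+\ln(l/r)$ only along $Q_0\to+\infty$ and $F(Q_0,\theta)\to-\ln(l/r)$ only along $Q_0\to-\infty$ (cf.\ the computation in the proof of Theorem \ref{thm-Qrev}), the blow-up sign matches the sign of the approached endpoint.

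The main obstacle is that, absent the monotonicity of $V_{rev}(\cdot,\theta)$ conjectured in Remark \ref{monoG2}, $Q_{rev}(V,\theta)$ is not a priori single-valued, so the statement implicitly selects a branch. The contradiction argument above, however, is branch-free: every sequence of zeros of $g_1$ must either leave every compact set or accumulate at a finite $Q^*$ forbidden by \eqref{Qrev-NecCond}. Consequently both limits hold uniformly across all branches, which is the sharpest form compatible with the present state of knowledge.
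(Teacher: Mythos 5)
The paper itself omits the proof, describing the result only as ``a duality to (iii) of Theorem \ref{VonQ}''; your proposal carries out exactly that inversion and is, in its essentials, the argument the authors intend. Your contradiction argument for the $\pm\infty$ limits is a genuine improvement in rigor: a finite accumulation point $Q^*$ of reversal permanent charges along $z_1V_n\to\ln(l/r)$ would, by continuity of $F(\cdot,\theta)=G_1(A(\cdot,\theta),\cdot,\theta)$, be a reversal permanent charge at a potential where one factor of \eqref{Qrev-NecCond} vanishes, hence where $J_2=0\neq J_1$ by \eqref{JonP} and no zero current is possible; and the sign of the blow-up is forced because $F(Q_0,\theta)\to\mp\ln(l/r)$ along $Q_0\to\mp\infty$ would contradict $F(Q_n,\theta)=z_1V_n\to\pm\ln(l/r)$ when $l\neq r$. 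This part is correct and branch-free.

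Two corrections are needed. First, your closing claim that ``both limits hold uniformly across all branches'' overreaches for the limit $z_1V\to\theta\ln(l/r)$: there the contradiction mechanism is unavailable, because a finite accumulation point $Q^*\neq 0$ with $F(Q^*,\theta)=\theta\ln(l/r)$ is \emph{not} forbidden by \eqref{Qrev-NecCond} (the limiting potential satisfies $(1-\theta^2)(\ln(l/r))^2>0$ and is interior to the admissible range). Excluding such a $Q^*$ amounts to injectivity of $V_{rev}(\cdot,\theta)$, i.e.\ precisely the monotonicity conjectured but unproven after Theorem \ref{monoVinQ}; Lemma \ref{lem-parG1G2} only gives monotonicity on the half-line $\theta Q_0\ge 0$. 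So for the first limit you have established the existence of the $C^1$ branch through $Q_0=0$ (via Theorem \ref{RevInQ} and the inverse function theorem), which is the correct reading of the statement, but not uniformity over all branches. Second, a minor slip: when $l=r$ the product in \eqref{Qrev-NecCond} equals $z_1z_2V^2=-z_1^2V^2\le 0$ for every $V$, so the hypothesis is vacuous rather than reducing to $V=0$; no separate verification is needed there.
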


Recall  we could not show but conjecture that $V_{rev}(Q_0,\theta)$ is monotone in $Q_0$ in Section \ref{revP}.  Should the conjecture be shown, $Q_{rev}(V,\theta)$ would be monotone in $V$. 

 
 \section{Conclusion.}\label{ConSec}
%
 	 
	 In this paper, we work on the classical PNP model allowing unequal  diffusion constants and for a single profile of permanent charges, to study the specific questions about reversal potentials and reversal permanent charges that are among the central issues of biological concerns.

	 A major challenge for  study  properties of ionic flow through ion channel lies in the fact that there are many specific physical parameters, including the boundary concentrations and transmembrane electric potential, permanent charge (the value $Q_0$ and the characteristic distribution parameters $\alpha$ and $\beta$), diffusion coefficients that all are relevant and interact with each other nonlinearly: {\em different regions of the large dimensional parameter space are associated with different properties}. Furthermore all present experimental measurements about ionic flow are of input-output type; that is, the internal dynamics within the channel cannot be measured with the current technology. It is thus extremely hard to extract coherent properties or to formulate specific characteristic quantities from the experimental measurements. 
Without knowing what to simulate among the potentially rich behavior presented by ion channel problems, it is also hard for numerical simulations to conduct any systematic studies.  

It is not expected that the abstract theory of singular perturbations could provide concrete results in general. For the PNP problem, the geometric singular perturbation approach applied in this paper was developed in \cite{EL07, Liu09} that relies on (i) the geometric singular perturbation theory based the advance of nonlinear dynamical system theory and (ii) two special structures of the problem -- one for the limiting fast subsystem and the other for the limiting slow subsystem. As a result, the zeroth order terms for the asymptotic solution are determined by {\em a governing system} -- a system of algebraic equations that involves {\em all parameters} of the boundary value problem. At least for simple setups, this framework led to discovery of rich effect of permanent charge on cation flux and anion flux (\cite{JLZ15,ZEL}), revealing of a mechanism of declining phenomenon -- increasing of the transmembrane electrochemical potential of an ion species in a particular way leads to decreasing of the ionic  flux (\cite{ZEL}),  formulate critical values for ionic flows (\cite{ELX15,JL12, LLYZ,  Liu18}).  For the reversal potential and reversal permanent charge problems studied in this paper, the governing system consists of (\ref{Matching}) and (\ref{Matching2}). The crucial step of the analysis in this paper is our reduction of the system (\ref{Matching}) and (\ref{Matching2})
to the single equation $G_2(A,Q_0,\theta)=0$ in (\ref{G1G2Sys}) for the reversal potential and to the system of two equations in (\ref{G1G2Sys}) for the reversal permanent charge -- both involve the other physical parameters.  We are then able to examine a number of properties about the dependence of the reversal potential on permanent charge, diffusion coefficients, etc.   	We like to point out that   the differential equation system (\ref{PNP}) is a stiff problem to solve numerically but  the system (\ref{G1G2Sys}) is a regular problem for numerical simulations. So the latter is much easier to solve numerically.
 	 	
 	On the other hand, one can only expect that analysis could provide detailed and concrete information in special cases or for a certain limit parameter values. Numerical simulation can take the advantage of knowing what are important from such an analysis and extend the setup to more physical meaningful ranges of parameters.  This has been done in many works, for example, in \cite{LTZ}, numerical simulations were conducted to compute some critical electric potentials, which were guided by the analytical result in \cite{JL12} that provides the defining properties of the critical electric potentials and explicit formulas for the critical values in terms of boundary concentrations etc. in special cases.  
	
	It is our plan to continue the study to extend the capability of the analysis in this work and to examine the topics numerically using more sophisticated models.
	



 \section{Appendix:  Proof of Proposition \ref{rSys}.}\label{Appendix}
 \setcounter{equation}{0}

We consider a special case where $z_1=-z_2$. Set $c_1^{[1]}c_2^{[1]}=A^2$ and $c_1^{[2]}c_2^{[2]}=B^2$. We will use the notion $l$, $r$, $Q_0$,    $ \alpha$, $\beta$, $S_a$, $S_b$ and $N$ introduced in (\ref{lrp}) and (\ref{abp}). 

With above terms, from \eqref{Matching2} we get
\begin{equation}\label{c-albr-m}
\begin{aligned}
&c_1^{[1,-]} = c_2^{[1,-]}=A,\quad c_1^{[2,+]}= c_2^{[2,+]}= B, \quad c_1^{[1,+]}= \frac{S_a - Q_0}{z_1}, \quad c_1^{[2,-]}= \frac{S_b - Q_0}{z_1}.
\end{aligned}
\end{equation}
 From the third and fourth equations in \eqref{Matching} one has,
\begin{equation}\label{phi-}
 \phi^{[1]} - \phi^{[1,+]} = \dfrac{1}{z_1}\ln \dfrac{S_a -Q_0}{z_1c_1^{[1]}},\quad \phi^{[2]} - \phi^{[2,-]} = \dfrac{1}{z_1} \ln \dfrac{S_b -Q_0}{z_1c_1^{[2]}}.
 \end{equation}

Then, from the first two equations of \eqref{Matching}, \eqref{Matching2} and \eqref{phi_al,br}  give
\begin{equation}\label{phi-ab}
\begin{aligned}
& \phi^{[1]}   = V + \dfrac{2D_2}{\big(D_1 + D_2\big)z_1} \ln (z_1A) + \frac{(D_1 - D_2)}{\big(D_1 + D_2\big)z_1}\ln (z_1 l) - \dfrac{1}{z_1} \ln (z_1 c_1^{[1]}) ,\\
& \phi^{[2]} = \dfrac{2D_2}{\big(D_1 + D_2\big)z_1} \ln (z_1B) + \frac{(D_1 - D_2)}{\big(D_1 + D_2\big)z_1}\ln (z_1 r) - \dfrac{1}{z_1} \ln (z_1 c_1^{[2]}).
\end{aligned}
\end{equation}
The rest of system \eqref{Matching} becomes,
\begin{equation}\label{Redu_Matching}
\begin{aligned}
c_2^{[1]}=&c_1^{[1]} e^{2z_1(\phi^{[1]}-\phi^{[1,-]})},\quad c_2^{[2]}=c_1^{[2]} e^{2z_1(\phi^{[2]}-\phi^{[2,+]})},\\
 z_1A =&S_a + Q_0 \ln \dfrac{S_a- Q_0}{z_1c_1^{[1]}},\quad
 z_1B =S_b + Q_0 \ln \dfrac{S_b- Q_0}{z_1c_1^{[2]}},\\
\frac{J_1}{D_1 D_2} =& -\dfrac{2(A- l)}{(D_1 + D_2)\alpha H(1)} = -\dfrac{2(r- B)}{(D_1 + D_2)(1-\beta) H(1)}\\
 =& -2\dfrac{(c_1^{[2,-]}- c_1^{[1,+]}) - (\phi^{[2,-]} - \phi^{[1,+]})Q_0}{(D_1 + D_2 )(\beta - \alpha)H(1)},\\
\dfrac{D_2 - D_1}{D_1 D_2} z_1J_1y^* = &\phi^{[1]} - \phi^{[2]} + \frac{1}{z_1} \ln\dfrac{c_1^{[1]}\big(S_b -Q_0 \big)}{c_1^{[2]} \big(S_a -Q_0 \big)}\\
 S_b- Q_0 =& e^{-\frac{D_1 + D_2}{D_1 D_2}z_1^2J_1y^*}\big(S_a - Q_0 \big) -\dfrac{2D_2Q_0}{D_1 + D_2 } \big(1-e^{-\frac{D_1 + D_2}{D_1 D_2}z_1^2J_1y^*} \big).
\end{aligned}
\end{equation}

From third and fourth equations in \eqref{Redu_Matching},
\begin{equation}\label{c1-ab}
\begin{aligned}
c_1^{[1]} = & \dfrac{S_a -Q_0}{z_1}\exp\Big\{\dfrac{S_a - z_1A}{Q_0}\Big\},\quad
c_1^{[2]} =  \dfrac{S_b -Q_0}{z_1} \exp\Big\{\dfrac{S_b - z_1B}{Q_0}\Big\}.
\end{aligned}
\end{equation}
The equations \eqref{phi-ab} and \eqref{c1-ab} give
\begin{equation}
\begin{aligned}
\phi^{[1]} =& V + \dfrac{2D_2}{\big(D_1 + D_2\big)z_1} \ln (z_1A) + \dfrac{D_1 - D_2}{(D_1 + D_2)z_1} \ln(z_1 l) - \dfrac{1}{z_1}\ln(S_a -Q_0)- \dfrac{S_a- z_1A}{z_1Q_0},\\
\phi^{[2]} =& \dfrac{2D_2}{\big(D_1 + D_2\big)z_1} \ln (z_1B) + \dfrac{D_1 - D_2}{(D_1 + D_2)z_1} \ln(z_1 r) - \dfrac{1}{z_1}\ln (S_b -Q_0 )- \dfrac{S_b - z_1B}{z_1Q_0}.
\end{aligned}
\end{equation}
Thus
\begin{equation}\label{phi(b-a)}
\begin{aligned}
\phi^{[2]} - \phi^{[1]}=& -V + \dfrac{2D_2}{\big(D_1 + D_2\big)z_1} \ln \dfrac{B}{A} - \dfrac{D_1 - D_2}{(D_1 + D_2)z_1} \ln \dfrac{l}{r}\\
& - \dfrac{1}{z_1}\ln \dfrac{S_b -Q_0}{S_a -Q_0}+ \dfrac{S_a - S_b + z_1(B-A)}{z_1Q_0}.
\end{aligned}
\end{equation}
Now, the equation  \eqref{phi-} and $y^*$ equation in sixth line of \eqref{Redu_Matching} give,
\[\phi^{[2,-]}- \phi^{[1,+]} = \phi^{[2]} - \phi^{[1]} - \frac{1}{z_1} \ln\dfrac{c_1^{[1]} \big(S_b -Q_0 \big)}{c_1^{[2]} \big(S_a -Q_0 \big)}.\]
But, from third and fourth equations of \eqref{Redu_Matching},
\[\ln\dfrac{c_1^{[1]} \big(S_b -Q_0 \big)}{c_1^{[2]} \big(S_a -Q_0 \big)}= \dfrac{1}{Q_0}\big(S_a -S_b+z_1(B-A) \big),\]
and hence,
\begin{equation}\label{phi-ambm2}
\begin{aligned}
\phi^{[2,-]}- \phi^{[1,+]} =& -V + \dfrac{2D_2}{\big(D_1 + D_2\big)z_1} \ln \dfrac{B}{A} - \dfrac{D_1 - D_2}{(D_1 + D_2)z_1} \ln \dfrac{l}{r} - \dfrac{1}{z_1}\ln \dfrac{S_b -Q_0}{S_a -Q_0} .
\end{aligned}
\end{equation}
Furthermore, it follows from above that,
\begin{equation}\label{J1y*-1}
\begin{aligned}
\phi^{[2,-]}- \phi^{[1,+]} =& \phi^{[2]} - \phi^{[1]} -  \dfrac{1}{z_1Q_0}\big(S_a -S_b+z_1(B-A) \big)
 =\dfrac{D_1 - D_2}{D_1 D_2} z_1J_1y^*.
\end{aligned}
\end{equation}
Thus, $J_1$ equations in \eqref{Redu_Matching}, with equations in (\ref{J1y*-1}) and (\ref{c-albr-m}) give,
\begin{equation}\label{J}
\begin{aligned}
\dfrac{J_1}{D_1 D_2} =& -\dfrac{2(A- l)}{(D_1 + D_2)\alpha H(1)} = -\dfrac{2(r- B)}{(D_1 + D_2)(1-\beta) H(1)}\\
 =& -2\dfrac{B-A-Q_0(\phi^{[2]} - \phi^{[1]}) }{(D_1 + D_2)(\beta - \alpha)H(1)}.
\end{aligned}
\end{equation}

\noindent Now, from the equations in \eqref{J},
\begin{equation}\label{Bphi}
B = \dfrac{1-\beta}{\alpha}(l-A)+r,\quad \phi^{[2]} - \phi^{[1]} =- \frac{A-l + \alpha (l-r)}{\alpha Q_0}.
\end{equation}
Thus, the equations in  \eqref{J1y*-1} and \eqref{Bphi} give,
\begin{equation}\label{J1y*-2}
\begin{aligned}
  \dfrac{J_1y^*}{D_1 D_2} =&  \dfrac{1}{z_1^2\big(D_2 - D_1\big)Q_0}N(A,Q_0),
\end{aligned}
\end{equation}
where $N =N(A,Q_0)=\displaystyle{ \dfrac{\beta - \alpha}{\alpha}z_1(A-l)  +S_a -S_b.
}$ is   defined in \eqref{abp}. 
On the other hand, from \eqref{phi(b-a)} and \eqref{Bphi} we obtain an equation in terms of $A$ and $Q_0$,
$$
\begin{aligned}
	\frac{(\beta - \alpha) (A-l)}{\alpha Q_0}  = & V - \dfrac{2D_2}{\big(D_1 + D_2\big)z_1} \ln \dfrac{B}{A} + \dfrac{D_1 - D_2}{(D_1 + D_2)z_1} \ln \dfrac{l}{r}	 + \dfrac{1}{z_1}\ln \dfrac{S_b -Q_0}{S_a -Q_0}- \dfrac{S_a -S_b }{z_1Q_0}.
\end{aligned}
$$
Now, it follows from above equation and the expression for $N(A,Q_0)$ that,
\begin{equation}\label{Apn-G1}
\dfrac{N}{Q_0} -  z_1V - \dfrac{2D_2}{D_1 + D_2} \ln \dfrac{A}{B} - \dfrac{D_1 - D_2}{D_1 + D_2}  \ln \dfrac{l}{r} + \ln \dfrac{S_a -Q_0}{S_b -Q_0} =0.
\end{equation}
Substituting \eqref{J1y*-2} into the last equation of \eqref{Redu_Matching} we get the other equation for $A$ and $Q_0$,
\[ \sqrt{Q_0^2+z_1^2B^2} - Q_0 = e^{\frac{(D_1 + D_2)}{(D_1 - D_2)Q_0}N}\big(\sqrt{Q_0^2+z_1^2A^2} - Q_0 \big)
  -\dfrac{2D_2Q_0}{D_1 + D_2} \big(1-e^{\frac{(D_1 + D_2)}{(D_1 - D_2)Q_0}N} \big),\]
that is equivalent to
$$
 \frac{(D_2 - D_1)Q_0}{(D_1 + D_2)}\ln \dfrac{S_a + \frac{(D_2 - D_1)Q_0}{(D_1 + D_2)}}{S_b + \frac{(D_2 - D_1)Q_0}{(D_1 + D_2)}} -N=0.
$$
This equation is $G_2=0$ in Proposition \ref{rSys}. Also, adding ${G_2}/{Q_0} $ to  equation \eqref{Apn-G1} one obtains
  $G_1=z_1V$ in Proposition \ref{rSys}. This completes the proof of Proposition \ref{rSys}.
\bigskip

\noindent 
{\bf Acknowledgement.}  The authors thank the anonymous referees for careful reviews of and invaluable comments on the original manuscript that significantly improved the paper.  Hamid Mofidi thanks Mingji Zhang, and both authors thank Bob Eisenberg for helpful discussions.    
Weishi  Liu's research is partially supported by the Simons Foundation 
Mathematics and Physical Sciences-Collaboration Grants for Mathematicians \#581822. 

 \small
 
  \bibliographystyle{plain}

\end{document}